\def\@settitle{\begin{center}%
		\baselineskip14\p@\relax
		\normalfont\LARGE\scshape\bfseries
		\@title
	\end{center}%
}
\def\subsection{\@startsection{subsection}{2}%
	\z@{.5\linespacing\@plus.7\linespacing}{.5\linespacing}%
	{\normalfont\large\bfseries}}
\def\subsubsection{\@startsection{subsubsection}{3}%
	\z@{.5\linespacing\@plus.7\linespacing}{.5\linespacing}%
	{\normalfont\itshape}}
\date{\today}
\authors}
\newtheorem{thm}{Theorem}[section]
\newtheorem{prop}[thm]{Proposition}
\newtheorem{defn}[thm]{Definition}
\newtheorem{lem}[thm]{Lemma}
\newtheorem{assumption}[thm]{Assumption}
\newtheorem{example}[thm]{Example}
\newtheorem{remark}[thm]{Remark}
\newtheorem{corr}[thm]{Corollary}
\def \l{\left}
\def \r{\right}
\def \R{\mathbb{R}}
\def \wh{\widehat}
\def \E{\mathbb{E}}
\def \B{\mathrm{ball}}
\def \g{\gamma}
\def \tr{\operatorname{trace}}
\def \vol{\text{d}}
\def \wt{\widetilde}
\def \s{\sigma}
\def \normal{\operatorname{normal}}
\def \A{\mathcal{A}}
\def \F{\text{face}}
\definecolor{darkcerulean}{rgb}{0.03, 0.27, 0.49}
\newcommand{\rev}[1]{{#1}}
\def \dist{\operatorname{dist}}
\def \supp{\operatorname{supp}}
\def\S{\mathbb{S}}
\def\d{\delta}
\def\range{\mathrm{range}}
\def\rank{\mathrm{rank}}
\def \n{\sharp}
\def\gauge{\mathcal{G}}
\renewcommand{\H}{\mathrm{H}}
\def\hull{\mathrm{conv}}
\def \L {\mathcal{L}}
\def\ext{\mathrm{ext}}
\def\face{\mathcal{F}}
\def\cone{\mathrm{cone}}
\def\cover{\mathrm{net}}
\def\aff{\mathrm{aff}}
\def\lin{\mathrm{lin}}
\def\int{\mathrm{int}}
\def\dual{\mathcal{D}}
\def\ind{\mathrm{indicator}}
\def\slice{\mathcal{S}}
\def\Slice{\mathrm{slice}}
\def\normal{\mathrm{normal}}
\def\op{\mathrm{op}}
\def\K{\mathcal{K}}
\def\subs{\mathcal{U}}
\def\entropy{\mathrm{entropy}}
\def\psd{\bm{\mathrm{S}}}
\def\vol{\mathrm{vol}}
\def\reach{\mathrm{reach}}
\numberwithin{equation}{section}
\DeclareRobustCommand{\gobblefive}[5]{}
\newcommand{\Let}{:=}
\newcommand{\Dic}{{A}}
\newcommand{\ones}{\mathds{1}}
\newcommand{\Set}{{S}}
\newcommand{\notebox}[2]{\begin{mdframed}[backgroundcolor=#1, linewidth=1pt]{#2}\end{mdframed}}
\title[]{The Nonconvex Geometry of Linear Inverse Problems}
\author[A. Eftekhari]{Armin Eftekhari}
\author[P. {Mohajerin Esfahani}]{Peyman {Mohajerin Esfahani}}
\thanks{The authors are with the Department of Mathematics and Mathematical Statistics, Umea University, Sweden,
({\tt Armin.Eftekhari@umu.se}), and the Delft Center for Systems and Control, Delft University of Technology, Netherlands ({\tt P.MohajerinEsfahani@tudelft.nl}).} 
\begin{document}
\maketitle

\begin{abstract}   
The gauge function, closely related to the atomic norm,  {measures the}  complexity of a statistical model, and has found broad applications in machine learning {and statistical signal processing}. In a high-dimensional learning problem, the gauge function attempts to safeguard against overfitting by promoting a sparse (concise) representation within the learning alphabet. 

In this work, within the context of linear inverse problems, we pinpoint the source of its success, but also argue that the applicability of the gauge function is inherently limited by its convexity, and showcase several learning problems where the classical gauge function theory fails. We then introduce a  new notion of {statistical} complexity, gauge$_p$ function, which overcomes the limitations of the gauge function. The {gauge$_p$ function is a simple generalization of the gauge function that can tightly control the sparsity of a statistical model within the learning alphabet and, perhaps surprisingly, draws further inspiration from the Burer-Monteiro factorization in computational mathematics.}

We {also} propose a {new learning machine}, with the building block of gauge$_p$ function, and arm this machine with a number of statistical guarantees. The potential of the {proposed gauge$_p$ function theory} is then studied  for two stylized applications. {Finally, we discuss the computational aspects and, in particular, suggest a tractable numerical algorithm for implementing the new learning machine.} 
\end{abstract}


\section{Introduction}\label{sec:intro}

While data is abundant, information is often sparse, and can be characterized mathematically using a small number of atoms, drawn from an  alphabet $\A \subset \R^d$. Concretely, an $r$-sparse  model $x^\n$ is specified as $x^\n := \sum_{i=1}^r c_i^\n A_i^\n$ for nonnegative coefficients $\{c_i^\n\}_{i=1}^r$ and atoms $\{A_i^\n\}_{i=1}^r\subset \A$. 

Complexity of the  model~$x^\n$ is often measured by its (convex) gauge function $\gauge_\A$~\cite{chandrasekaran2012convex,bach2012optimization,rockafellar2015convex}, to be defined later. Serving as a safeguard against {overfitting}, the gauge function has become a mainstay in linear inverse problems, a large class of learning problems with diverse applications in statistical signal processing and machine learning.

More specifically, to discover the true model~$x^\n$ or its atoms~$\{A_i^\n\}_{i=1}^r$, the classical gauge function theory studies the (convex) learning machine
\begin{equation}
    \min_x \|\L(x)-y\|_2^2 \,\,\text{subject to}\,\, \gauge_\A(x) \le \gamma,
    \label{eq:introCvx}
\end{equation}
\rev{Above,~$\L\rev{:\R^d\rightarrow\R^m}$ is a linear operator and the vector~$y$ typically in the form of $\L(x^\n)$ stores~$m$ (possibly inexact) observations of the true model~$x^\n$. Alternatively, as briefly discussed later, one can consider the basis pursuit or lasso reformulations of the problem~\eqref{eq:introCvx}.} 

\begin{wrapfigure}[12]{r}{0.32\textwidth}
\centering
\includegraphics[width=.25\textwidth]{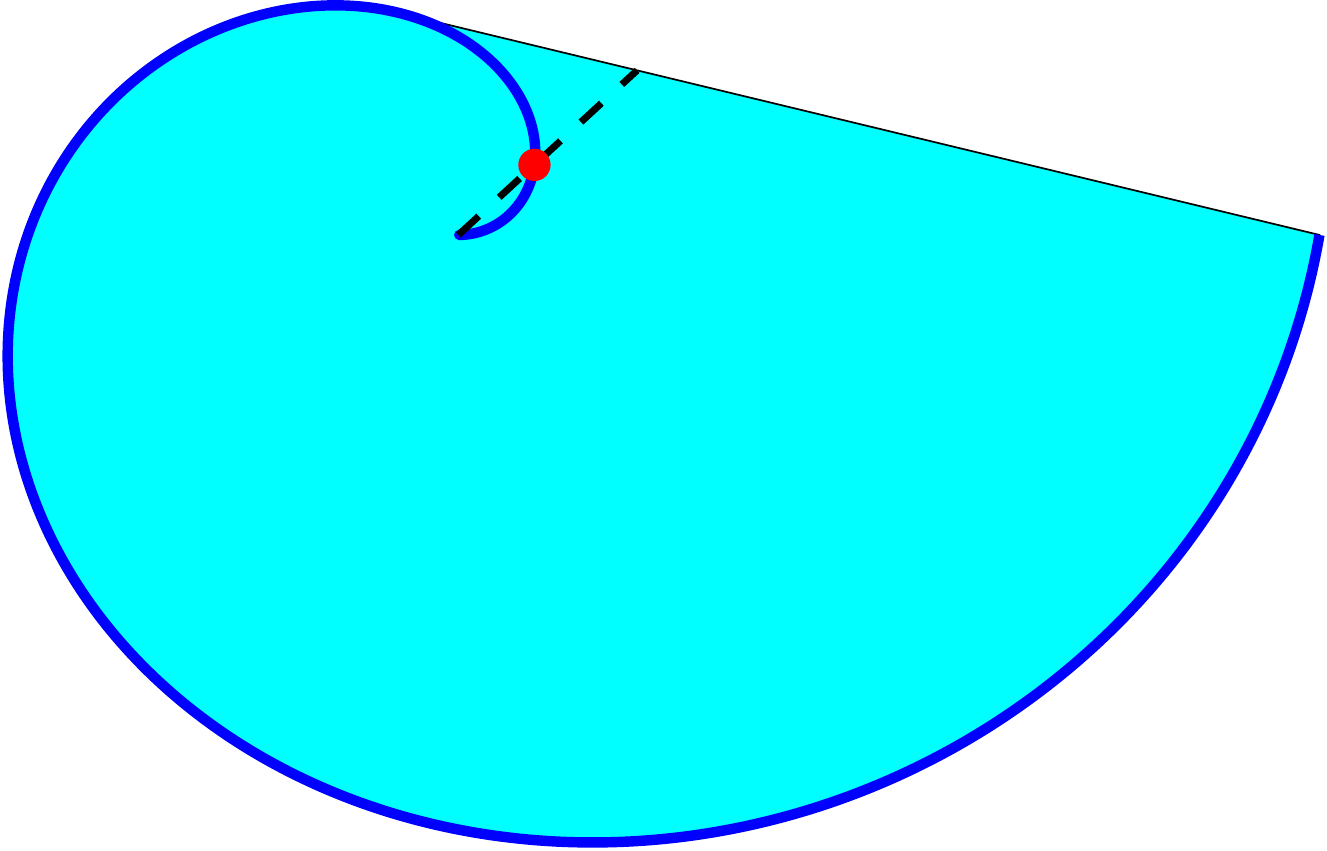}
\caption{\footnotesize{
\rev{As a toy example, the (blue) curve represents the learning alphabet~$\A$. A level set of the corresponding gauge function~$\gauge_{\A}$ is filled with cyan. The (convex) gauge function  evidently loses the geometric details of the alphabet~$\A$.} 
}} 
\label{fig:manifolds}
\end{wrapfigure}

A  certificate of correctness for the output of the machine~\eqref{eq:introCvx} is at the heart of the classical gauge function theory. This certificate can be constructed, for example, when~$\L$ is a generic linear operator and we have access to sufficiently many observations~\cite[Corollary~3.3.1]{chandrasekaran2012convex}.

The literature of the gauge function features numerous successful applications in different areas including statistics~\cite{hastie2015statistical,negahban2012unified,wainwright2009sharp} and signal processing~\cite{bhaskar2013atomic,candes2013simple,ahmed2013blind,shah2012linear,candes2008introduction}, to name a few. In all these success stories, the gauge function  successfully captures the underlying geometry of the learning alphabet.

The applicability of the gauge function is, however, inherently limited by its convexity. Indeed, there is anecdotal and numerical evidence suggesting  that the gauge function is incapable of capturing the  geometric details of many learning alphabets. {For example,} see~\cite{bach2008convex,richard2014tight} for sparse principal component analysis~(PCA) and {see}~\cite{schiebinger2018superresolution,li2016super} in the context of super-resolution. Motivated by these examples, {in this work} we will develop a theoretical foundation for a nonconvex counterpart of the gauge function, along with some basic computational tools. 

\paragraph{{\bf Contributions}} Our main objective is to develop a generalized theory, dubbed the gauge$_p$ function theory, {that addresses} the statistical limitations of the classical gauge function theory. More specifically, the following summarizes the contributions of this study: 

\begin{enumerate}[label=(\roman*), itemsep = 0mm, topsep = 0mm, leftmargin = 12mm]
    \item  This work proposes and studies  the gauge$_p$ function, a simple generalization of the classical gauge function,  as a new notion for statistical complexity  that can tightly control the sparsity level of a model within the learning alphabet~(Proposition~\ref{prop:gauge2EquivDefn}).
    
    \item The gauge$_p$ function motivates a new learning machine, for which 
    we develop statistical guarantees that parallel those of the classical gauge function theory (Theorem~\ref{thm:constructOpt}).
    The new theory is showcased with two stylized applications to manifold models and sparse PCA.
    \item This work also studies the computational aspects of implementing the new learning machine and proposes a tractable algorithm (Proposition~\ref{prop:alg}).
\end{enumerate}

\rev{\paragraph{\bf Additional details} 
We now  provide a section-by-section overview \rev{of this work,} punctuated \rev{by a} few bibliographic notes: Section~\ref{sec:old} \rev{reviews} the classical gauge function theory. \rev{Several successful and failed applications of this theory are  highlighted in Section~\ref{sec:old} and in the appendices.}}

Sections~\ref{sec:gaugep} and~\ref{sec:exGaugep} propose and study  the gauge$_p$ function, denoted by~$\gauge_{\A,p}$, as a new notion of statistical complexity. The gauge$_p$ function~$\gauge_{\A,p}$ generalizes the classical gauge function~$\gauge_\A$ and can tightly control the sparsity level of a model within the learning alphabet $\A$. Gauge$_p$ function draws further inspiration from the idea of Burer-Monteiro factorization~\cite{burer2005local}. In the success stories of the classical theory, gauge and gauge$_p$ functions nearly coincide. In contrast, whenever the classical theory fails, gauge$_p$ function behaves  more favourably compared to the classical gauge function, as detailed in Section~\ref{sec:exGaugep}. Motivated by this observation, Section~\ref{sec:newMachine} introduces the learning machine 
    \begin{equation}
    \min_x \|\L(x)-y\|_2^2 \quad \text{subject to}\quad \gauge_{\A,p}(x) \le \gamma,
    \label{eq:introNonCvx}
    \end{equation}
in which the \rev{new} gauge$_p$ function plays the role of regularizer in place of the classical gauge function \rev{in}~\eqref{eq:introCvx}.  \rev{Section~\ref{sec:newMachine} elucidates that, as~$p$ varies, the new machine interpolates between two extremes:
\begin{itemize}[itemsep = 0mm, topsep = 0mm, leftmargin = 12mm]
    \item the classical convex machine~\eqref{eq:introCvx}; and 

    \item the $\ell_0$-pursuit: 
    $\min_x \|\L(x)-y\|_2^2 \,\,\text{subject to}\,\, x \text{ has an } r\text{-sparse decomposition in }\A$. 
\end{itemize}
We recall that $r$ is the sparsity level of the true model~$x^\n$ within the alphabet~$\A$. Moreover, the new machine~\eqref{eq:introNonCvx} extends the Burer-Monteiro idea to any alphabet in the following sense: As detailed in Section~\ref{sec:newMachine}, the new machine~\eqref{eq:introNonCvx} coincides with the  widely-used Burer-Monteiro factorization when the learning alphabet~$\A$ is the set of unit-norm rank-$1$ matrices. We also note that implementing the new machine~\eqref{eq:introNonCvx} often requires solving a nonconvex optimization problem.}
    
Section~\ref{sec:newGaugeTheory} develops some statistical guarantees for the new machine~\eqref{eq:introNonCvx}. In particular, Lemma~\ref{lem:nonCvxLearn} therein introduces a family of certificates for verifying the correctness of the \rev{solutions of the optimization problem~\eqref{eq:introNonCvx}},
analogous to Lemma~\ref{lem:duality} for the convex machine~\eqref{eq:introCvx}.

When~$\L$ is a generic linear operator, $p$ is small and~$m$ is sufficiently large, we also develop a probabilistic approach to construct these certificates, as detailed in  Theorem~\ref{thm:constructOpt}, loosely analogous to~\cite[Corollary~3.3.1]{chandrasekaran2012convex} for the convex machine~\eqref{eq:mainCvx}. The proof technique for Theorem~\ref{thm:constructOpt} appears to be new in this context and might be of independent interest. More specifically, instead of a single certificate, the proof of Theorem~\ref{thm:constructOpt} constructs a family of certificates that \emph{jointly} certify the learning outcome. 
    
In Section~\ref{sec:action}, we showcase the  new theory with two stylized applications, namely, manifold-like models~\cite{peyre2009manifold} and sparse PCA~\cite{zou2006sparse}. Both applications span highly active research areas and {it is not our intention to improve over the state of art for these applications, but rather to merely convince the reader that the new machine~\eqref{eq:introNonCvx}  merits further investigation and  research.}
    
\paragraph{\bf \rev{Computational aspects}}
Implementing the new machine~\eqref{eq:introNonCvx} often requires solving a nonconvex optimization problem. For certain learning alphabets, such as the one in matrix sensing~\cite[Chapter~5]{chi2019nonconvex} or~\cite[Section~2.1]{eftekhari2020implicit}, the landscape of the optimization problem~\eqref{eq:introNonCvx} is benign for \rev{a sufficiently small~$p$. That is, the optimization  problem does not have any spurious stationary points when $p$ is small. For such alphabets, problem~\eqref{eq:introNonCvx} can  be solved efficiently~\cite{jin2017escape}.}

For certain other alphabets, such as smooth manifolds~\cite{eftekhari2015new}, the optimization landscape of~\eqref{eq:introNonCvx} might in general contain spurious stationary points which could trap first- or second-order optimization algorithms, such as gradient descent. Nevertheless, problem~\eqref{eq:introNonCvx} can be solved efficiently to (near) stationarity, rather than global optimality. This compromise is common in machine learning: As an example,  empirical risk minimization is known to be intractable for neural networks in general. Instead the practitioners {often} seek local (rather than global) optimality by means of first- or second-order optimization algorithms~\cite[Chapter 20]{shalev2014understanding}.
    
{For yet other learning alphabets, such as the one in sparse regression~\cite{hastie2015statistical}, the problem~\eqref{eq:introNonCvx} might be NP-hard in the worst case. Nevertheless, not all is lost here and we draw inspiration from recent developments in mixed-integer programming~\cite{bertsimas2020sparse,bertsimas2016best}. Indeed, after decades of research, modern mixed-integer optimization algorithms that directly solve {the}  problem~\eqref{eq:l0machine} for sparse regression can now outperform convex heuristics in speed and scalability, and without incurring the well-documented bias of the shrinkage methods. More specifically, inspired by~\cite{bertsimas2020sparse}, we develop in Section~\ref{sec:algorithm} a tractable optimization algorithm to numerically solve the new problem~\eqref{eq:introNonCvx} when the alphabet is finite and, consequently, problem~\eqref{eq:introNonCvx} is  NP-hard.} 

\paragraph{\bf {Kurzgesagt}}
To summarize, motivated by the limitations of the classical gauge function theory, this work studies a new learning machine for solving linear inverse problems. The  gauge$_p$ function theory, introduced in this work, is far from complete and this study raises several research questions, which require further investigation. For example, 
this first work is largely focused on the statistical aspects of the new theory. Beyond the preliminary results presented in Section~\ref{sec:algorithm}, {more} effort is required to better understand the computational aspects of the new machine.

\paragraph{\textbf{Notation}}
Throughout this study, we adopt the notation from~\cite{barvinok2002course} to denote by $\lin(\cdot)$, $\aff(\cdot)$, $\cone(\cdot)$,  and $\hull(\cdot)$, the linear, affine, conic, and convex hulls of a set, respectively. A cone is a positive homogeneous subset of a vector space. For a convex set $\mathcal{C}$, its tangent cone at $x\in \mathcal{C}$ is $\cone(\mathcal{C}-x)$, where the subtraction is in the Minkowski's sense.
We use $\|x\|_p$ to denote the $\ell_p$-norm of a vector $x \in \R^n$. For a function $f:\R^d\to \R$, its convex conjugate is defined as $f^*(z) \Let \sup_x \langle x, z\rangle - f(x)$.  Given a linear operator~$\L:\mathbb X \to \mathbb Y$, defined on a pair of vector spaces~$\mathbb X$ and~$\mathbb Y$, the corresponding adjoint operator is denoted by $\L^*$, i.e., $\langle \L(x), y \rangle = \langle x, \L^*(y)\rangle$ for all $(x,y) \in \mathbb X \times \mathbb Y$. When the spaces are equipped with the norms $(\mathbb X, \|\cdot\|_{\mathbb X}), (\mathbb Y,\|\cdot\|_{\mathbb Y})$ , the induced operator norm is denoted by~$\|\L\|_{\op}\Let \sup_{x \in \mathbb X} \|\L(x)\|_{\mathbb Y} / \|x\|_{\mathbb X}$. We also use the notation $[l]:=\{1,\cdots,l\}$ for an integer $l$. Throughout, we always use the convention that $0/0=0$. 

\section{Classical (Convex) Gauge Function Theory}\label{sec:old}

In this section, we first review the classical gauge function theory. We then highlight both successful and failed applications of the theory in order to motivate the main contribution of this work, which is a generalized gauge function theory that addresses the statistical limitations of the classical theory.

To review the gauge function theory, this section takes a somewhat different perspective, which appears to be new, to the best of our knowledge. 
The different geometric perspective of this section will later help us generalize the classical theory in Section~\ref{sec:new}. 

We now begin with a few definitions. The notion of {slice} below,
visualized in Figure~\ref{fig:slices}, appears to be new even though it has  implicitly appeared before, see for example~\cite{bi2016refined}.

\begin{figure}[ht!]
\centering
    \subfloat[Two dimensional slices]{\includegraphics[scale=0.4, clip]{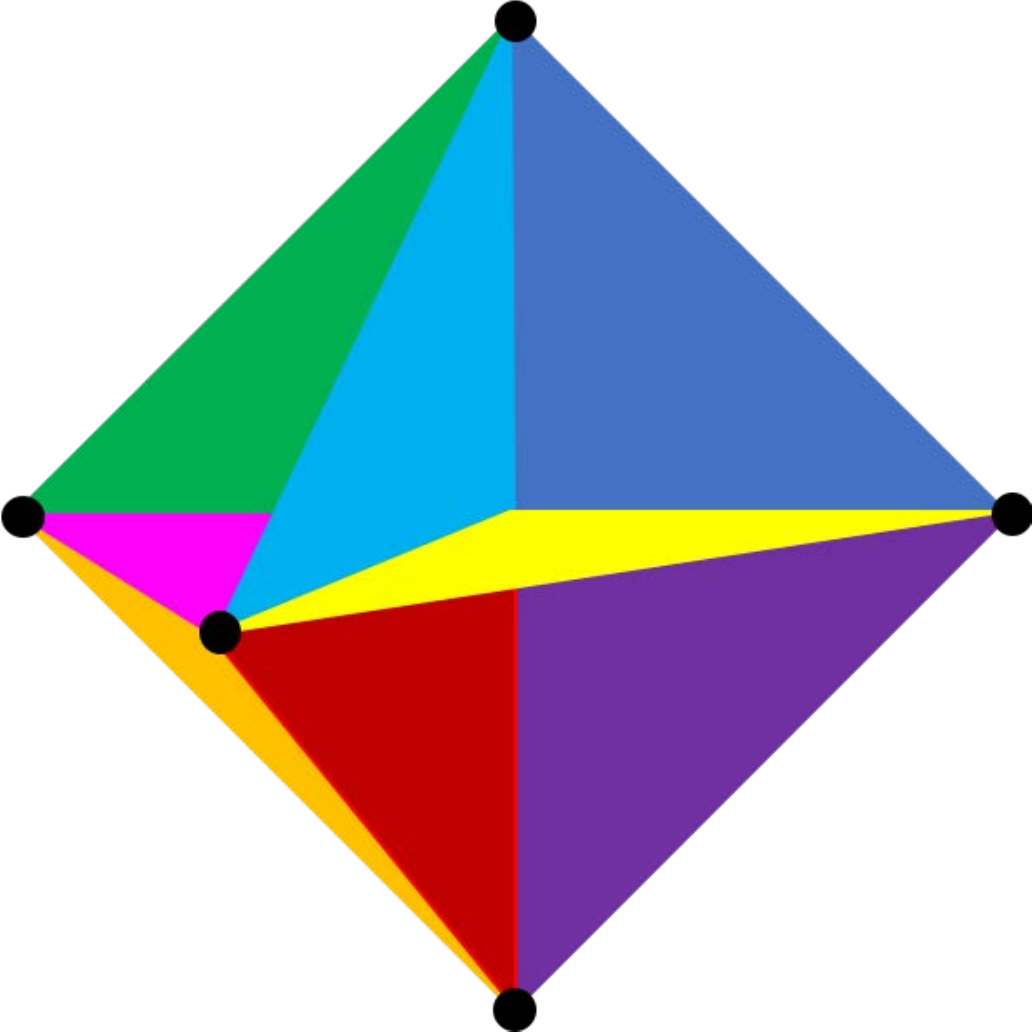}\label{fig:slices}} \hfil
    \subfloat[Exposed versus hidden faces]{\includegraphics[scale = 0.44, clip]{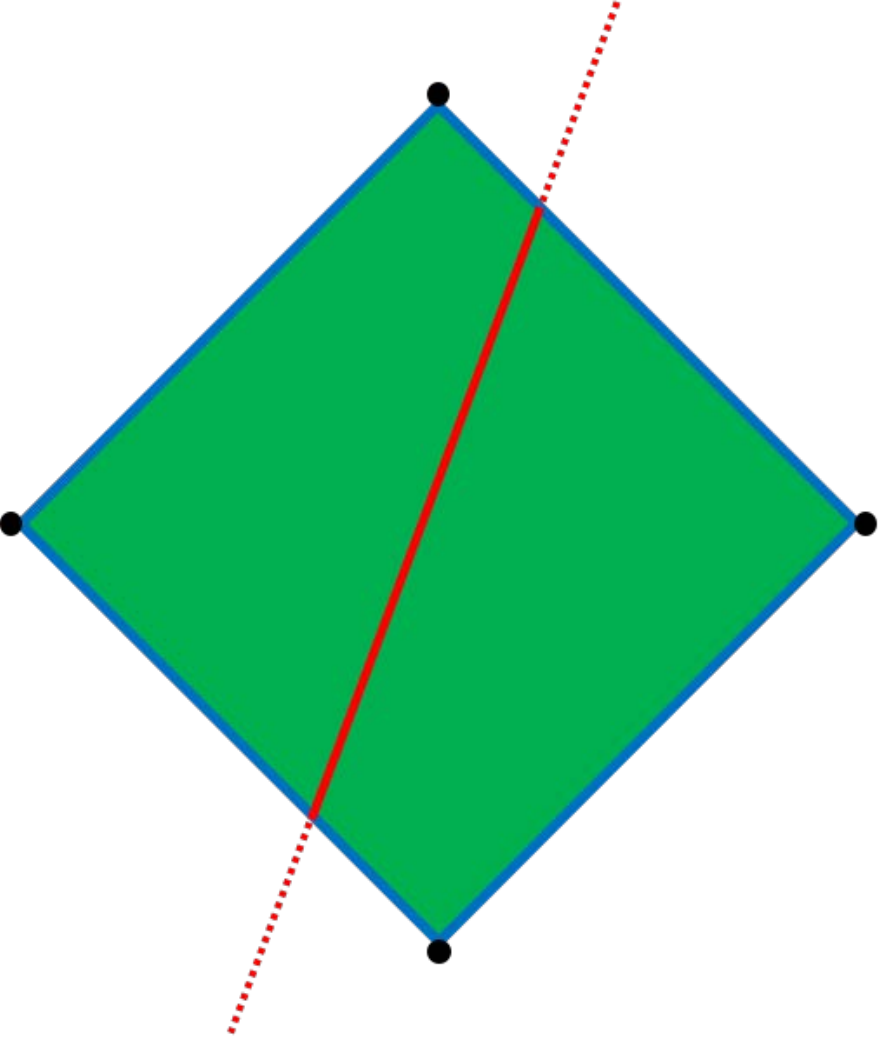}\label{fig:extremes}}
     \caption{
    Figure~\ref{fig:slices} depicts several two-dimensional slices of the alphabet~$\{\pm e_i\}_{i=1}^3$ in different colors where~$e_i$ is the $i^{\text{th}}$ canonical vector. In 
    Figure~\ref{fig:extremes}, the black dots and the blue line segments
    are exposed faces while the solid red line segment is a  hidden 
    face. The extreme points coincide with the black dots.}
	\label{fig:pictorial}
\end{figure}

\begin{defn}[{\sc Slice}]\label{defn:slice}
For an alphabet $\A\subset \R^d$,  an integer $r$ and  atoms $\{A_i\}_{i=1}^r \subset \A$, the corresponding slice of~$\hull(\A)$ is \rev{defined to be the set $\hull(\{A_i\}_{i=1}^r \cup \{0\})$. We also let $\Slice_r(\A)$ be the set of all slices of $\hull(\A)$ formed by at most $r$ atoms. Note that $\Slice_r(\A)$ is a set of sets.}
\end{defn}

Definition~\ref{defn:slice} allows us to {re}write the $r$-sparse  model $x^\n = \sum_{i=1}^r c_i^\n A_i^\n$ {in Section~\ref{sec:intro}} as 
\begin{align}
    x^\n \in \cone(\slice^\n), \qquad \slice^\n \in \Slice_r(\A),
    \label{eq:mainModelRewritten}
\end{align}
where the slice $\slice^\n$ {in~\eqref{eq:mainModelRewritten}} is formed by the atoms $\{A_i^\n\}_{i=1}^r$. \rev{That is, $\mathcal{S}^\n=\mathrm{conv}(\{A_i^\n\}_{i=1}^r \cup \{0\})$. The representation in \eqref{eq:mainModelRewritten} is particularly helpful when we are more interested in the atoms $\{A_i^\n\}_{i=1}^r$ rather than their coefficients $\{c_i^\n\}_{i=1}^r$.} In convex statistical learning, the complexity of a model, such as~$x^\n$, is commonly measured by its gauge function~\cite{chandrasekaran2012convex,rockafellar2015convex}.
\begin{defn}[{\sc Gauge function}] \label{defn:gauge}
For an alphabet $\A \subset \R^d$, the gauge function{~$\gauge_\A:\R^d \to \R$} is 
\begin{align}\label{eq:gaugeFcn1}
    \gauge_\A(x) \Let &  \inf \big\{ t \,:\, x/t \in  \hull(\A),\, t \ge 0 \big\} \\
    = &   \inf \l\{ \sum_{i=1}^l c_i
    \,:\, x = \sum_{i=1}^l c_i A_i,  
    \,c_i \ge 0, \,\, A_i\in \A,\,i\in [l]\r\},
    \notag 
\end{align}
with the convention that $0/0=0$. Above, $[l]:=\{1,\cdots,l\}$. \rev{Lastly, the second infimum above is taken over~$l$ and $\{c_i\}_{i=1}^l$ and~$\{A_i\}_{i=1}^l$.}
\end{defn}

\rev{Let us next  collect some standard assumptions on learning alphabets. Throughout this work, each technical result is centered around an alphabet that satisfies a subset of the assumptions below.}

\begin{assumption}[{\sc Alphabet regularity}] \label{a:alph}
The following assumptions are in order:
 \begin{enumerate}[label=(\roman*), itemsep = 0mm, topsep = 0mm, leftmargin = 12mm]

\item (Origin:) \label{assumption:containsOrigin}
The alphabet $\A$ contains the origin, i.e.,  $0\in \A $. 

\item (Symmetry:) \label{assumption:alphabet0}
The alphabet $\A$ is symmetric, i.e., $\A = -\A$. 
\item (Boundedness:) 
\label{assumption:bounded}
The alphabet $\A$ is bounded, i.e., $\sup_{A\in \A}\|A\|_2<\infty$. 

 \item (Unit sphere:)
 \label{assumption:unitSphere}
 The alphabet $\A $ belongs to the unit sphere, i.e., $\|A\|_2=1$ for every $A\in \A$.
\end{enumerate}
\end{assumption}

Under Assumption~\ref{a:alph}\ref{assumption:alphabet0}, the gauge function~$\gauge_\A$ is in fact a norm {for}~$\R^d$~\cite{rockafellar2015convex}, and the unit ball of this norm is \rev{the convex hull of $\A$}, i.e.,  
\begin{align}
    \hull(\A) = \{ x: \gauge_\A (x) \le 1\}. 
    \label{eq:unitBall}
\end{align}
Moreover, the dual norm corresponding to $\gauge_\A$ is denoted by $\dual_\A:\R^d\rightarrow\R$, and defined as
\begin{align}
    {\dual_\A(z) \Let \sup \big\{ \langle z, x \rangle : \gauge_\A(x) \le 1 \big\} = \sup \big\{ \langle z, A \rangle : A\in \A \big\}}.
    \label{eq:dual}
\end{align}
As a {device to control the} statistical complexity of learning, the gauge function has found broad applications in statistical signal processing and machine learning.  We are particularly interested in linear inverse problems~\cite{chandrasekaran2012convex}, which unify a wide range learning problems and \rev{we will encounter a few of them throughout this work. More specifically, for a linear operator~$\L:\R^d\rightarrow\R^m$ and an integer~$r$, consider the (exact) setup}
\begin{align}
    y := \L(x^\n) \in \R^m,\qquad x^\n\in \cone(\slice^\n),\, \slice^\n \in \Slice_r(\A),
    \label{eq:mainModel}
    \tag{exact}
\end{align}
where $\Slice_r(\A)$ was defined in Definition~\ref{defn:slice}. \rev{For example, in {statistical inference} or signal processing, $\L$ is the measurement operator and $y$ is the vector of observations~\cite{eldar2012compressed,van2000asymptotic}. \eqref{eq:mainModel} is called an exact setup because it does not account for any noise or numerical inaccuracy that might distort~$\L(x^\n)$.}
Given $y$, in order to learn $x^\n$ or its sparse decomposition {in the alphabet~$\A$},  consider the learning machine   
\begin{equation}
    \min_{x}\,\, \| \L(x) -y\|_2^2 \text{ subject to } \gauge_\A(x) \le \gauge_\A(x^\n).
    \label{eq:mainCvx0}
\end{equation}
\rev{Even though the gauge function  appears in the constraints of the problem~\eqref{eq:mainCvx0},} what follows in this section also holds true for the basis pursuit reformulation of~\eqref{eq:mainCvx0}, in which the objective and constraints are swapped and the \rev{exact} knowledge of~$\gauge_\A(x^\n)$ is not required~\cite{chen2001atomic}. \rev{It seems more convenient for us to work with~\eqref{eq:mainCvx0}, compared to its basis pursuit formulation.}
\rev{Using the definition of gauge function in~\eqref{eq:gaugeFcn1}, we can also reformulate the above machine as}
\rev{\begin{align}
     \inf\l\{ \Big\| \sum_{i=1}^l c_i \L(A_i) - y \Big\|_2^2 : \sum_{i=1}^l c_i \le \gauge_\A(x^\n),\, c_i \ge 0,\, A_i\in \A,\, i\in [l]  \r\}, 
    \tag{gauge} 
    \label{eq:mainCvx} 
\end{align}
where the infimum is over $l$ and $\{c_i\}_{i=1}^l$ and $\{A_i\}_{i=1}^l$.  It is the above reformulation of~\eqref{eq:mainCvx0} that we will often work with in this paper.}
To study~\eqref{eq:mainCvx}, let us recall two basic concepts from convex geometry, \rev{both} {visualized in Figure~\ref{fig:extremes}}.  See~\cite[Definitions 2.6 and 3.1]{barvinok2002course}.  

\begin{defn}[{\sc Extreme point}]\label{defn:extreme}
    An extreme point of a closed convex set $\mathcal{C}$ is a point in $\mathcal{C}$ that cannot be written as a convex combination of other points in $\mathcal{C}$. Let {also the set} $\ext(\mathcal{C})$ collect {all} the extreme points of $\mathcal{C}$. 
\end{defn}

\begin{defn}[{\sc Face}] \label{defn:face}
    For a closed convex set $\mathcal{C}$, the subset $\face\subset\mathcal{C}$ is  a face of $\mathcal{C}$ if there exists a hyperplane $\mathcal{H}$ such that $\face = \mathcal{C}\cap \mathcal{H}$. Dimension of a face $\face$  is the dimension of the affine hull of~$\face$, i.e., $\dim(\face)=\dim(\aff(\face))$. Moreover, we say that $\face$ is an exposed face of~$\mathcal{C}$ if one of the two halfspaces formed by~$\mathcal{H}$ contains~$\mathcal{C}$.  A face~$\face$ is hidden if it is not exposed. Lastly, for an integer $r$, we let~$\F_r(\A)$ denote the set of all faces of~$\hull(\A)$ with dimension at most $r$.
\end{defn}

For an alphabet $\A$, a simple inclusion that we will use frequently in this work  is that
 \begin{align}
 \ext(\hull(\A)) \subseteq \A,
 \label{eq:extInclusionGen}
 \end{align}
which states that the extreme points of the convex hull of a set belong to that set.
{Note also that an exposed $0$-dimensional face of a convex set $\mathcal{C}$ is simply an extreme point of $\mathcal{C}$.} 

\rev{Equipped with the above two definitions, the following  lemma exemplifies learning with the gauge function. In effect, the lemma below states} that the machine~\eqref{eq:mainCvx} successfully learns the model~$x^\n$, provided that a certain certificate of correctness exists. 
The next lemma is in essence a standard result, see for example~\cite[Lemma 2.1]{candes2006robust}, though it has {not} appeared in the literature from the geometric perspective adopted in this section, to the best of our knowledge. 

\begin{lem}[\rev{{\sc Certificate of correctness}}]\label{lem:duality}
Consider the model $x^\n$ in~\eqref{eq:mainModel}. 
Suppose that Assumptions~\ref{a:alph}\ref{assumption:alphabet0} and~\ref{assumption:bounded} are met.
If $x^\n=0$, then the machine~\eqref{eq:mainCvx} correctly returns $0$. \rev{That is, $x^\n=\sum_{i=1}^{\wh{l}} \wh{c}_i \wh{A}_i =0$, where $\{\wh{c}_i,\wh{A}_i\}_{i=1}^{\wh{l}}$ is a solution of the optimization problem~\eqref{eq:mainCvx}.}

Otherwise, let $\face^\n$  be an exposed face of $\hull(\A)$ such that~$x^\n/\gauge_\A(x^\n) \in \face^\n$. 
Suppose also that the following holds: 
\begin{enumerate}[label=(\roman*), itemsep = 0mm, topsep = 0mm, leftmargin = 12mm]
    \item \label{lem:duality:injective}
    The linear operator~$\L$ in~\eqref{eq:mainModel} is injective when restricted to the subspace $\lin(\face^\n)$, i.e.,
    \begin{align*}
        x \in \lin(\face^\n) \quad \text{and} \quad \L(x) = 0 \qquad \Longleftrightarrow\quad x = 0\,.
    \end{align*}
    \item 
    The face~$\face^\n$ has a support vector within the range of $\L^*$, where $\L^*$ is the adjoint {of the operator $\L$}, i.e., there exists $Q\in \range (\L^*)$ such that 
    \begin{align} 
    & \langle Q, x-x' \rangle <0 , \qquad \forall x\in \hull(\A) -   \face^\n,\,\,  \forall  x'\in \face^\n\,. 
    \label{eq:dualCertCnds}
    \end{align}
\end{enumerate}
Then the machine~\eqref{eq:mainCvx} successfully returns~$x^\n$. \rev{That is, $x^\n=\sum_{i=1}^{\wh{l}} \wh{c}_i \wh{A}_i$, where $\{\wh{c}_i,\wh{A}_i\}_{i=1}^{\wh{l}}$ is a solution of the optimization problem~\eqref{eq:mainCvx}.} 
\end{lem}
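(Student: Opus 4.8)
The plan is to run a standard dual-certificate exact-recovery argument. Consider first the degenerate case $x^\n=0$. Then $\gauge_\A(x^\n)=\gauge_\A(0)=0$, and since Assumptions~\ref{a:alph}\ref{assumption:alphabet0} and~\ref{assumption:bounded} make $\gauge_\A$ a norm whose unit ball is $\hull(\A)$ (cf.~\eqref{eq:unitBall}), the constraint $\gauge_\A(x)\le 0$ is met only by $x=0$. Because the objective at $x=0$ equals $\|\L(0)-y\|_2^2=\|\L(x^\n)\|_2^2=0$, the point $0$ is the unique feasible point and hence the output of~\eqref{eq:mainCvx}. From now on assume $x^\n\ne 0$ and abbreviate $\gamma^\n:=\gauge_\A(x^\n)$, which is finite and strictly positive.

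Next I would record that $x^\n$ is optimal for~\eqref{eq:mainCvx0}, equivalently for~\eqref{eq:mainCvx}: it is feasible since $\gauge_\A(x^\n)\le\gamma^\n$, and its objective value is $\|\L(x^\n)-y\|_2^2=0$ by~\eqref{eq:mainModel}, so the optimal value of the machine is $0$. Hence every solution, whose underlying model I denote $\wh{x}$, satisfies $\L(\wh{x})=y=\L(x^\n)$ and $\gauge_\A(\wh{x})\le\gamma^\n$. (For~\eqref{eq:mainCvx} a solution is a decomposition $\{\wh{c}_i,\wh{A}_i\}_{i=1}^{\wh{l}}$ with model $\wh{x}:=\sum_{i=1}^{\wh{l}}\wh{c}_i\wh{A}_i$; then $\gauge_\A(\wh{x})\le\sum_{i=1}^{\wh{l}}\wh{c}_i\le\gamma^\n$, so $\wh{x}$ is admissible for the argument below, and establishing $\wh{x}=x^\n$ is exactly the claim.) Setting $h:=\wh{x}-x^\n$, the first identity gives $\L(h)=0$, and writing $Q=\L^*(w)$ as furnished by hypothesis~(ii) we also get $\langle Q,h\rangle=\langle w,\L(h)\rangle=0$. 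It remains to show $h=0$.

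The heart of the argument is the geometric observation that the certificate $Q$ \emph{isolates} the face $\face^\n$ within the unit gauge ball: if $\gauge_\A(z)\le 1$ and $\langle Q,z\rangle\ge\langle Q,x^\n/\gamma^\n\rangle$, then $z\in\face^\n$. Indeed, $\gauge_\A(z)\le 1$ places $z$ in $\hull(\A)$ by~\eqref{eq:unitBall}; if moreover $z\notin\face^\n$, then the support-vector inequality~\eqref{eq:dualCertCnds}, applied to this $z$ and to $x'=x^\n/\gamma^\n\in\face^\n$, yields $\langle Q,z\rangle<\langle Q,x^\n/\gamma^\n\rangle$, a contradiction. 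I would then apply this with $z:=\wh{x}/\gamma^\n$, which satisfies $\gauge_\A(z)\le 1$ (by positive homogeneity of $\gauge_\A$) and $\langle Q,\wh{x}/\gamma^\n\rangle=\langle Q,x^\n/\gamma^\n\rangle+(\gamma^\n)^{-1}\langle Q,h\rangle=\langle Q,x^\n/\gamma^\n\rangle$, to conclude $\wh{x}/\gamma^\n\in\face^\n$. Since $x^\n/\gamma^\n\in\face^\n$ as well, $h/\gamma^\n=\wh{x}/\gamma^\n-x^\n/\gamma^\n$ lies in $\face^\n-\face^\n\subseteq\lin(\face^\n)$, i.e.\ $h\in\lin(\face^\n)$. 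Finally, hypothesis~(i) — injectivity of $\L$ on $\lin(\face^\n)$ — together with $\L(h)=0$ forces $h=0$, hence $\wh{x}=x^\n$.

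I expect the delicate part to be this last geometric step together with the bookkeeping around it: keeping the normalizations by $\gamma^\n$ consistent, invoking~\eqref{eq:unitBall} (which is precisely where boundedness and the norm property of $\gauge_\A$ enter) to move between the sublevel set $\{x:\gauge_\A(x)\le 1\}$ and $\hull(\A)$, and reading off $h\in\lin(\face^\n)$ from the joint membership of $\wh{x}/\gamma^\n$ and $x^\n/\gamma^\n$ in $\face^\n$ so that the injectivity hypothesis becomes applicable. The remaining ingredients — feasibility and optimality of $x^\n$, the identity $\langle Q,h\rangle=0$ arising from $Q\in\range(\L^*)$ and $\L(h)=0$, and the reduction from an optimal decomposition to its model — are routine.
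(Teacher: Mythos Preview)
Your proof is correct and follows essentially the same approach as the paper's: establish $\wh{x}/\gamma^\n\in\hull(\A)$, use the certificate $Q\in\range(\L^*)$ together with $\L(\wh{x}-x^\n)=0$ to force $\wh{x}/\gamma^\n\in\face^\n$, and then conclude via injectivity of $\L$ on $\lin(\face^\n)$. The only cosmetic difference is that the paper organizes the certificate step as a two-case split (either $\wh{x}/\gamma^\n\in\face^\n$ already, or not, in which case one derives $0<0$), whereas you phrase it directly as the contrapositive; your version is if anything slightly tidier.
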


Lemma~\ref{lem:duality} also immediately extends to the basis pursuit formulation of the problem~\eqref{eq:mainCvx}, in which the objective and constraint are swapped~\cite{chen2001atomic}. 

\rev{The correctness certificate $Q$ in Lemma~\ref{lem:duality} can be successfully designed for a variety of linear inverse problems, e.g., in compressive sensing~\cite{candes2008introduction} and low-rank matrix completion~\cite{davenport2016overview}. Often the starting point is  the construction of a pre-certificate within $\range(\L^*)$, for which the assertion~\eqref{eq:dualCertCnds} is then verified, see for example~\cite{li2015overcomplete}.}

Despite these success stories, there are many linear inverse problems for which the classical gauge function theory fails. To highlight the statistical failures of the  machine~\eqref{eq:mainCvx}, we focus in this section on structured data factorization, i.e.,  the \rev{particular}  linear inverse problem 
in (\ref{eq:mainCvx}), \rev{for which}~$\L$ is the identity operator. \rev{In this special case, it is not difficult to verify that solving~\eqref{eq:mainCvx} is equivalent to finding a minimal decomposition of $x^\n$ that achieves $\gauge_\A(x^\n)$. That is, when $\L$ is the identity operator,~\eqref{eq:mainCvx} reduces to the optimization problem}
\begin{align}
    \gauge_\A(x^\n) = \inf\l\{ \sum_{i=1}^l c_i : x^\n = \sum_{i=1}^l c_i A_i,\,  c_i \ge 0,\, A_i\in \A,\, \forall i\in [l] \r\}, 
    \label{eq:cvxFacNoiseFre}
    \tag{gauge : $\mathcal{L}=\mathrm{id}$}
\end{align}
where the infimum above is taken over the integer $l$ and coefficients$\{c_i\}_i$ and the atoms $\{A_i\}_i$. 
Let us list two examples of structured data factorization for which the classical theory fails.

\begin{example}[{\sc Manifold models}]\label{ex:1}
In a multitude of  problems, the alphabet $\A$ is naturally an  embedded submanifold of the Euclidean space~\cite{eftekhari2015new,boumal2020introduction}. 
Our first example {showcases} the failure of the gauge function theory for manifold manifolds. As a toy example, here we consider the alphabet
\begin{align}
    \A := \{ (t\cos(\pi t), t \sin(\pi t)): t\in [0,2]\} \subset \R^2,
    \label{eq:spiral}
    \tag{spiral}
\end{align}
which forms a spiral in $\R^2$. \rev{That is, $\A$ is the one-dimensional manifold with boundary visualized in Figure~\ref{fig:manifolds}.} For the above alphabet, it is important to note that the inclusion in~\eqref{eq:extInclusionGen} is strict, i.e., 
\begin{align}\label{eq:inclusionStrict} 
    \ext(\hull(\A)) \subset \A.   
\end{align}
In particular, the {$1$-sparse} model 
\begin{align}
    x^\n := A^\n = \frac{1}{4\sqrt{2}} (1,1)\in \A,
    \label{eq:exModelManifold}
\end{align}
is {not} an extreme point of $\hull(\A)$, i.e., $x^\n$ above belongs to the right-hand side but {not} to the left-hand side of~\eqref{eq:inclusionStrict}. \rev{The model $x^\n$ in~\eqref{eq:exModelManifold} is represented with a red dot in Figure~\ref{fig:manifolds}. We note that the model~$x^\n$ in~\eqref{eq:exModelManifold} has the alternative decomposition}
\begin{align}
    x^\n = \frac{A_1}{8\sqrt{2}}  + \frac{A_2}{2\sqrt{2}} , \qquad 
    A_1 = \frac{1}{2}(0,1) \in \A,\qquad A_2 = 2(1,0)\in \A.
\end{align}
By comparing the two alternative representations of $x^\n$ above, we find that 
\begin{align}
    \gauge_\A(x^\n) \le \min\l(1,\frac{1}{8\sqrt{2}}+\frac{1}{2\sqrt{2}} \r)=\frac{5}{8\sqrt{2}}<1.
    \qquad \text{(see \eqref{eq:gaugeFcn1})}
    \label{eq:gaugeFailManEx}
\end{align}
\rev{That is, \eqref{eq:exModelManifold} is not the minimal decomposition of $x^\n$ that achieves $\gauge_\A(x^\n)$ in~\eqref{eq:cvxFacNoiseFre}.}
In fact, {a} visual inspection of Figure~\ref{fig:manifolds}  reveals that the minimal decomposition of $x^\n$ that achieves~$\gauge_\A(x^\n)$  is not $1$-sparse.  {We conclude that} the machine~\eqref{eq:cvxFacNoiseFre} fails to learn any $1$-sparse decomposition for the model $x^\n$.
\end{example}

\rev{As as another failed application of the gauge function theory, we turn to sparse PCA. Here the objective is to decompose a data matrix into a small number of rank-$1$ and sparse components.} More specifically, suppose that the rows and columns of the data matrix $x^\n \in \R^{d_1\times d_2}$ correspond to  samples and features, respectively. In general, the {leading} principal components of $x^\n$ are not sparse, which renders them difficult to interpret. That is, it is often not possible to single out the key features in a data matrix from its leading principal components. 

In contrast, for an integer $r$, sparse PCA in effect models the data matrix~$x^\n$ as 
    \begin{align}
       & x^\n\in \cone(\slice^\n),\qquad  \slice^\n \in \Slice_r(\A), \nonumber\\
       & \A := \{ uv^\top: \|u\|_2=\|v\|_2=1,\, \|v\|_0 \le k\} \subset \R^{d_1\times d_2},
       \label{eq:spca1}
       \tag{sparse PCA}
    \end{align}
    where $\|v\|_0$  denotes the number of nonzero entries of~$v$. 
    \rev{In words, the data matrix $x^\n$ is a conic combination of $r$ atoms from the alphabet $\A$.}
    \rev{The representation in~\eqref{eq:spca1} is closely related to~\cite{bach2008convex,richard2014tight} and~\cite[Equation~3.12]{zou2006sparse}. Note that $k$ is the sparsity level of vector~$v$ (i.e.,  the number of its nonzero entries) and should not to be confused with the sparsity level~$r$ of the model~$x^\n$ (i.e., the number of atoms from the alphabet~$\A$ that make up~$x^\n$).}
    Note also that~$\A$  may be identified with an alphabet in~$\R^d$ with~$d=d_1 d_2$. One may also revise the definition of $\A$ by including $\|v\|_\infty = O(1/\sqrt{k})$ to ensure that the atoms that make up $x^\n$ are diffuse on their support. 
    Importantly, note that  {the} model~\eqref{eq:spca1} is exact \rev{, namely, we have complete access to the matrix $x^\n$. Often,~$x^\n$ is distorted by noise and we will study this general case in Section~\ref{sec:action}.}

\begin{example}[{\sc Sparse PCA}]\label{ex:2} \rev{In this example, we consider the model~\eqref{eq:spca1}
with $d_1=d_2=3$ and~$k=2$.}
\rev{To be specific,} we consider the {$2$-sparse} model 
\begin{align}
    x^\n := \frac{A_1^\n}{2}+\frac{A_2^\n}{2} \in \R^{3\times 3}, 
    \label{eq:rep1}
\end{align}
where the atoms $A_1^\n,A_2^\n\in \A$ are 
specified as  
\begin{align}
    & A_1^\n := u_1^\n (v_1^\n)^\top = \l[
    \begin{array}{ccc}
       \frac{1}{\sqrt{3}}   &
        \frac{1}{\sqrt{3}} &
      \frac{1}{\sqrt{3}}  
    \end{array}
    \r]^\top \cdot 
    \l[
    \begin{array}{ccc}
        0.3122 &
          0.95 &
         0
    \end{array}
    \r] \in \R^{3\times 3}, \nonumber\\
    & 
    A_2^\n:= u_2^\n (v_2^\n)^\top 
    = \l[
    \begin{array}{ccc}
        \frac{1}{\sqrt{2}} &
         -\frac{1}{\sqrt{2}}  & 
        0
    \end{array}
    \r]^\top \cdot 
    \l[
    \begin{array}{ccc}
       0   &
       0.95    &
       0.3122  
    \end{array}
    \r] \in \R^{3\times 3}. 
    \label{eq:atomsSparsePCAEx}
\end{align}
Recalling Definition~\ref{defn:slice}, let $\slice^\n$ be the two-dimensional slice formed by $\{A_1^\n,A_2^\n\}$. \rev{That is, $\mathcal{S}^\n = \mathrm{conv}(\{A_1^\n,A_2^\n,0\})$.} \rev{In particular, in view of~\eqref{eq:rep1}, note that~$x^\n \in \slice^\n$.} On the other hand, note also that $x^\n$ above  has the alternative decomposition 
\begin{align}
    x^\n & = c_1 u_1 v_1^\top+c_2 u_2 v_2^\top+c_3 u_3 v_3^\top\nonumber
    \qquad ( u_1v_1^\top, u_2v_2^\top , u_3 v_3^\top \in \A) 
    \\
    & = 0.1561 \cdot 
    \l[
    \begin{array}{ccc}
         \frac{1}{\sqrt{3}}  &
         \frac{1}{\sqrt{3}}& 
        \frac{1}{\sqrt{3}}
    \end{array}
    \r]^\top \cdot 
    \l[
    \begin{array}{ccc}
        1 &
         0 &
         0
    \end{array}
    \r]+
     0.6717 \cdot 
    \l[
    \begin{array}{ccc}
        0.9082 & -0.0918 &   0.4082
    \end{array}
    \r]^\top \cdot 
    \l[
    \begin{array}{ccc}
        0 & 1 &   0
    \end{array}
    \r]\nonumber\\
    & \qquad + 
       0.1561 \cdot 
    \l[
    \begin{array}{ccc}
        \frac{1}{\sqrt{2}}& 
         -\frac{1}{\sqrt{2}} & 
         0
    \end{array}
    \r]^\top \cdot 
    \l[
    \begin{array}{ccc}
         0 &
        0  &
         1
    \end{array}
    \r].
    \label{eq:example2}
\end{align}
By comparing the two alternative representations of $x^\n$ in~\eqref{eq:rep1} and~\eqref{eq:example2}, we 
 observe that  
\begin{align}
\gauge_{\A}(x^\n) \le \min\l( \frac{1}{2}+\frac{1}{2},  0.1561 + 0.6717 +  0.1561\r)  = 0.984 < 1, \qquad \textup{(see \eqref{eq:gaugeFcn1})}
\label{eq:gaugeFailsSparsePCA}
\end{align}
and thus the $2$-sparse decomposition of $x^\n$ in~\eqref{eq:rep1} is not  minimal, \rev{i.e., the decomposition in \eqref{eq:rep1} does not achieve $\gauge_\A(x^\n)$.} In fact, we may verify that the machine~\eqref{eq:cvxFacNoiseFre} fails to find any $2$-sparse decomposition for the model $x^\n$. 


We also remark that the failure of~\eqref{eq:cvxFacNoiseFre} in this example  persists even after imposing an incoherence requirement on the alphabet $\A$, i.e., {after} including $\|v\|_\infty =O(1/\sqrt{k})$ in~\eqref{eq:spca1}. Indeed, the true atoms $\{A_1^\n,A_2^\n\}$ in~\eqref{eq:atomsSparsePCAEx} are already sufficiently diffuse on their support. 
To close this example, we note that the potential failure of the gauge function theory, in the context of sparse PCA, has also been documented in~\cite{bach2008convex}.
\end{example}

\rev{In both Examples~\ref{ex:1} and~\ref{ex:2}, it is easy to verify that $x^\n/\gauge_\A(x^\n)$ belongs to a high- (rather than low-) dimensional exposed face of~$\hull(\A)$. While both Examples~\ref{ex:1} and~\ref{ex:2} fall under the broad umbrella of structured data factorization, it is also not difficult to find other \rev{examples} for which the classical gauge function theory fails beyond structured data factorization, e.g., see Section~\ref{sec:numerics}.} 

\section{\texorpdfstring{Gauge$_p$}{Gp} Function Theory and Main Results}\label{sec:new}

\rev{In Section~\ref{sec:old}, we reviewed the gauge function theory and highlighted its statistical limitations. This section aims to overcome some of the limitations of the classical theory by introducing a generalized gauge function theory, dubbed gauge$_p$ function theory. This generalized theory is the main contribution of this study.}

Below, in Section~\ref{sec:gaugep}, we will first introduce the gauge$_p$ function, which is the central object of the new theory. We then compare the gauge$_p$ function with the classical gauge function  in Section~\ref{sec:exGaugep}. In short, the gauge$_p$ function is a simple generalization of the gauge function that can tightly  control the sparsity of a model within the learning alphabet. We finally introduce the  new learning machine at the heart of the gauge$_p$ function theory in Section~\ref{sec:newMachine}, and {list} its statistical guarantees in Section~\ref{sec:newGaugeTheory}. This new learning machine uses, as the regularizer, the gauge$_p$ function instead of the classical gauge function. Aside from {its} statistical benefits, the computational {aspects} of the proposed  machine are discussed later in Section~\ref{sec:algorithm}. 

\subsection{\texorpdfstring{Gauge$_p$}{Gp} Function}\label{sec:gaugep}

Central to the generalized theory is a new notion of complexity for statistical models, dubbed gauge$_p$ function, which generalizes the gauge function in Definition~\ref{defn:gauge}. Gauge$_p$ function can tightly control the sparsity level of a model within the learning alphabet. Before defining the gauge$_p$ function, we begin below with a few {necessary} geometric concepts. To be specific, let us first introduce a geometric object which, as we will see shortly, generalizes the notion of the convex hull of a set.  

\begin{defn}[$\mathrm{conv}_p(\A)$]\label{defn:hall}
For an  alphabet $\A$ and integer $p$, we define
\begin{align}
    \hull_p(\A) :=  \bigcup_{\slice \in \Slice_p(\A)}  \slice,
    \label{eq:far-right}
\end{align}
to be the union of all slices of $\hull(\A)$ {formed by at most $p$ atoms.} The notation $\Slice_p(\A)$ above was introduced in Definition~\ref{defn:slice}. 
\end{defn}

For example, for the alphabet~$\A:=\{\pm e_i\}_{i=1}^3$ in Figure~\ref{fig:slices},~$\hull_2(\A)$ is the union of all colored triangles, some of which are hidden from the view.
We next record a  simple property of $\hull_p(\A)$. 

\begin{prop}[{\sc Nested hulls}]
\label{prop:propsHullP} 
Suppose that Assumption~\ref{a:alph}\ref{assumption:containsOrigin} {or~\ref{assumption:alphabet0}} is met. Then,
\begin{align}
    \bigcup_{0\le \tau \le 1} \tau \A = \hull_1(\A) \subset \hull_2(\A) \subset  \cdots \subset \hull_{d+1}(\A) = \hull_{d+2}(\A) = \cdots  = \hull(\A),
    \label{eq:nested}
\end{align}
where $\tau\A = \{\tau A: A\in \A\}$. 
\end{prop}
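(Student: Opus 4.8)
The statement is a chain of set-identities and inclusions about the nested bodies $\hull_p(\A)$. I would organize the argument into three separate claims, proved in the order they appear in~\eqref{eq:nested}.

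\emph{Step 1: the leftmost equality $\bigcup_{0\le\tau\le1}\tau\A=\hull_1(\A)$ and the inclusions $\hull_p(\A)\subset\hull_{p+1}(\A)$.} By Definition~\ref{defn:hall}, $\hull_1(\A)=\bigcup_{\slice\in\Slice_1(\A)}\slice$, and each such slice is $\hull(\{A,0\})=\{\tau A:0\le\tau\le1\}$ for some $A\in\A$; taking the union over $A\in\A$ gives exactly $\bigcup_{0\le\tau\le1}\tau\A$. For the chain of inclusions, I would observe that any slice formed by at most $p$ atoms is also a slice formed by at most $p+1$ atoms (just repeat one of the atoms, or equivalently note that $\Slice_p(\A)\subseteq\Slice_{p+1}(\A)$ directly from Definition~\ref{defn:slice}); hence the union defining $\hull_p(\A)$ is taken over a subfamily of the sets indexed in $\hull_{p+1}(\A)$, giving $\hull_p(\A)\subseteq\hull_{p+1}(\A)$. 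This part uses only Assumption~\ref{a:alph}\ref{assumption:containsOrigin} (so that $0$ may be omitted or included in forming slices without changing them, and $\Slice_p$ is well-populated), and I would note that \ref{assumption:alphabet0} implies \ref{assumption:containsOrigin} since $\A=-\A$ forces $0\in\A$ when $\A$ is nonempty (or one simply treats $0\in\hull(\A)$).

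\emph{Step 2: the terminal identities $\hull_{d+1}(\A)=\hull_{d+2}(\A)=\cdots=\hull(\A)$.} Here the workhorse is Carath\'eodory's theorem. The inclusion $\hull_p(\A)\subseteq\hull(\A)$ is immediate since every slice $\hull(\{A_i\}_{i=1}^p\cup\{0\})$ is a subset of $\hull(\A\cup\{0\})=\hull(\A)$ (using $0\in\hull(\A)$). For the reverse inclusion when $p\ge d+1$: take $x\in\hull(\A)$; by Carath\'eodory's theorem applied in $\R^d$, $x$ is a convex combination of at most $d+1$ points of $\A$, say $x=\sum_{i=1}^{d+1}\lambda_iA_i$ with $\lambda_i\ge0$, $\sum\lambda_i=1$. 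Then $x\in\hull(\{A_1,\dots,A_{d+1}\}\cup\{0\})$, which is a slice in $\Slice_{d+1}(\A)\subseteq\Slice_p(\A)$, so $x\in\hull_p(\A)$. Combined with the monotonicity from Step 1, this pins down $\hull_p(\A)=\hull(\A)$ for all $p\ge d+1$.

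I expect Step 2 to be the only place requiring a genuine external tool (Carath\'eodory), and the main subtlety — really the only thing to be careful about — is the bookkeeping around the origin: a slice is $\hull(\{A_i\}\cup\{0\})$ rather than $\hull(\{A_i\})$, so I must check that adjoining $0$ is harmless, which is exactly where Assumption~\ref{a:alph}\ref{assumption:containsOrigin}/\ref{assumption:alphabet0} enters (it guarantees $0\in\hull(\A)$, hence $\hull(\A\cup\{0\})=\hull(\A)$, and also that $\Slice_1(\A)$ reproduces all the segments $[0,A]$). Everything else is routine set inclusion. I would present Steps 1 and 2 together in a short paragraph each, with the Carath\'eodory invocation being the one line that does the real work.
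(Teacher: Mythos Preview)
Your proposal is correct and follows essentially the same route as the paper: the leftmost identity is unpacked directly from the definition of $\Slice_1(\A)$, the chain of inclusions is immediate from $\Slice_p(\A)\subseteq\Slice_{p+1}(\A)$, and the terminal equalities come from Carath\'eodory together with $0\in\hull(\A)$. One small slip: $\A=-\A$ does \emph{not} force $0\in\A$ (e.g.\ $\A=\{\pm1\}$), but your parenthetical fallback to $0\in\hull(\A)$ is exactly right and is all the argument needs.
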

In words, {the sets $\{\hull_p(\A)\}_p$ provide a nested sequence of approximations to the alphabet $\A\subset \R^d$. That is, as~$p$ decreases, $\hull_p(\A)$ becomes an increasingly {better} approximation to~$\A$.} {In view of \eqref{eq:nested}, we note that the sets $\{\hull_p(\A)\}_{p=1}^d$  in~\eqref{eq:nested} might be nonconvex.} In contrast, the sets $\{\hull_p(\A)\}_{p\ge d+1}$ are  convex and, in fact, {all} coincide with $\hull(\A)$. Indeed, the identities in~\eqref{eq:nested} follow from an application of the Carath\'eodory theorem~\cite[Theorem 2.3]{barvinok2002course}. 

To each set~$\hull_p(\A)$ above, we  associate a  gauge$_p$ function in analogy with~\eqref{eq:gaugeFcn1}. 

\begin{defn}[{\sc Gauge$_p$ function}]\label{def:gauge_p} 
For an  alphabet $\A \subset \R^d$ and integer $p$, the corresponding gauge$_p$ function $\gauge_{\A,p}:\R^d\rightarrow\R$ is defined as  
\begin{align}
    \gauge_{\A,p}( x):=\inf \l\{t : x/t\in \hull_p(\A), \, t\ge 0  \r\}.
    \label{eq:gauge2Hull}
\end{align}
\end{defn}

\rev{Gauge$_p$ function generalizes the notion of gauge function in Definition~\ref{defn:gauge} in the sense that $\gauge_{\A,p}=\gauge_\A$ when $p> d$, see \eqref{eq:gaugeFcn1} and \eqref{eq:gauge2Hull}. In contrast, when $p\le d$, the gauge$_p$ function tightly controls the sparsity level of a model. That is,} $\gauge_p(x)$ is finite only when~$x$ has a $p$-sparse decomposition in the alphabet~$\A$. This and other basic properties of gauge$_p$ functions are collected below, and it is straightforward to prove them using Proposition~\ref{prop:propsHullP} and  Definition~\ref{def:gauge_p}.

\begin{prop}[{\sc Properties of gauge$_p$ functions}]\label{prop:gauge2EquivDefn}
Consider the gauge$_p$ function in Definition~\ref{def:gauge_p} for an integer~$p$ and an alphabet~$\A \subset \R^d$. Suppose that Assumptions~\ref{a:alph}\ref{assumption:containsOrigin} and~\ref{assumption:bounded} are both met. Then the following statements are true: 

\begin{enumerate}[label=(\roman*), itemsep = 0mm, topsep = 0mm, leftmargin = 12mm]
\item The gauge$_p$ function {has the} equivalent {definition} 
\begin{align}
    \gauge_{\A,p}(x):=  
    {\inf} \l\{\sum_{i=1}^p c_i \, :\,  x = \sum_{i=1}^p c_i A_i ,\, c_i \ge 0, \, A_i\in \A, \, \forall i\in [p] \r\}. 
    \label{eq:gauge2}
\end{align}

\item \label{prop:gauge_p:origin}
$\gauge_{\A,p}(x)=0$ if and only if $x=0$. 

\item \label{prop:gauge_p:infeasible}
If $x$ does not admit a $p$-sparse decomposition in the alphabet $\A$, then $\gauge_{\A,p}(x)=\infty$. 

\item \label{prop:gauge_p:sparse}
If $\gauge_{\A,p}(x) < \infty$, then any minimal decomposition of $x$ that achieves $\gauge_{\A,p}(x)$ in~\eqref{eq:gauge2} is   $p$-sparse {in the alphabet $\A$.} 

\item \label{prop:gauge_p:conjugate}
The convex conjugate {of $\gauge_{\A,p}$, denoted here} by~$\gauge_{\A,p}^* $, {is specified as}
\begin{align}
    \gauge_{\A,p}^*(z) := 
    \begin{cases}
    0 & \dual_\A(z) \le 1\\
    \infty & \text{otherwise}.
    \end{cases}
\end{align}

\item \label{prop:gauge_p:envelop}
The convex envelope of $\gauge_{\A,p}$ is the gauge function~$\gauge_\A$ in~\eqref{eq:gaugeFcn1}, i.e., $\gauge_{\A,p}^{**} = \gauge_{\A}$.

\item \label{prop:gauge_p:nested}
The gauge$_p$ functions {satisfy} the nested property
\begin{align}
    \gauge_{\A,1}(x) \ge \gauge_{\A,2}(x) \ge \cdots \ge \gauge_{\A,d+1}(x) {= \gauge_{\A,d+2}(x) = \cdots }= \gauge_\A(x),
    \label{eq:nestedGauges}
\end{align}
where  $\{\gauge_{\A,p}\}_{p=1}^d$ above may be nonconvex functions.
\end{enumerate}
\end{prop}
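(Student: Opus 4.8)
\textbf{Proof proposal for Proposition~\ref{prop:gauge2EquivDefn}.}

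The plan is to establish the seven claims largely in the order stated, using Proposition~\ref{prop:propsHullP} (nested hulls) as the backbone and the definitions~\eqref{eq:gaugeFcn1}, \eqref{eq:gauge2Hull}, \eqref{eq:dual} as the basic tools. First, for the equivalent definition~(i), I would unfold Definition~\ref{def:gauge_p}: by construction $x/t\in\hull_p(\A)$ means $x/t\in\slice$ for some slice $\slice=\hull(\{A_i\}_{i=1}^p\cup\{0\})$, i.e.\ $x/t=\sum_{i=1}^p \lambda_i A_i$ with $\lambda_i\ge 0$ and $\sum_i\lambda_i\le 1$ (the slack going to the $\{0\}$ vertex). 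Writing $c_i := t\lambda_i$ gives $x=\sum_{i=1}^p c_i A_i$ with $\sum_i c_i = t\sum_i\lambda_i \le t$, and conversely any such representation with $\sum_i c_i = t$ places $x/t$ in the corresponding slice; taking infima over both sides yields~\eqref{eq:gauge2}. (One should note the convention $0/0=0$ handles $x=0$, and allowing at most $p$ atoms versus exactly $p$ atoms is immaterial since we may pad with zero coefficients.) Claims~(ii), (iii), (iv) are then immediate consequences of~\eqref{eq:gauge2}: (ii) follows because $x=0$ is achieved with all $c_i=0$, and conversely $\gauge_{\A,p}(x)=0$ forces, via boundedness of $\A$ (Assumption~\ref{a:alph}\ref{assumption:bounded}) together with a compactness/limiting argument, a representation of $x$ with vanishing total coefficient, hence $x=0$; (iii) is the observation that if no $p$-term representation exists the infimum in~\eqref{eq:gauge2} is over the empty set, which is $+\infty$; (iv) is the statement that any minimizing representation uses at most $p$ atoms by the very form of~\eqref{eq:gauge2}.

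For claim~(v), the convex conjugate, I would compute directly from~\eqref{eq:gauge2}: $\gauge_{\A,p}^*(z)=\sup_x\big(\langle z,x\rangle-\gauge_{\A,p}(x)\big)=\sup\big\{\sum_{i=1}^p c_i(\langle z,A_i\rangle-1): c_i\ge 0,\ A_i\in\A\big\}$, since the infimum defining $\gauge_{\A,p}$ can be pulled inside the supremum over $x$ (every $x$ with a $p$-term representation is reached). If $\dual_\A(z)=\sup_{A\in\A}\langle z,A\rangle\le 1$, every bracket $\langle z,A_i\rangle-1\le 0$, so the sup is attained at $c_i=0$, giving value $0$; if $\dual_\A(z)>1$, pick $A$ with $\langle z,A\rangle>1$ and let $c\to\infty$ to get $+\infty$. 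Claim~(vi) then follows from Fenchel--Moreau / biconjugation: $\gauge_{\A,p}^{**}$ is the conjugate of the indicator-type function in~(v), namely $\gauge_{\A,p}^{**}(x)=\sup_{\dual_\A(z)\le 1}\langle z,x\rangle$, and one recognizes this as the gauge function $\gauge_\A$ (it is the support function of the polar body $\{z:\dual_\A(z)\le 1\}$, whose polar is $\hull(\A)$ under Assumption~\ref{a:alph}\ref{assumption:alphabet0}, cf.\ \eqref{eq:unitBall}--\eqref{eq:dual}); since $\gauge_{\A,p}^{**}$ is by general convex analysis the closed convex envelope of $\gauge_{\A,p}$ and $\gauge_\A$ is already closed and convex, this gives the convex-envelope statement. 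Here I should double-check that $\gauge_{\A,p}$ is bounded below by an affine function (it is nonnegative), so that the biconjugate is honestly the convex hull and not identically $-\infty$.

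Finally, claim~(vii), the nested inequalities, is a direct corollary of Proposition~\ref{prop:propsHullP}: since $\hull_1(\A)\subset\hull_2(\A)\subset\cdots\subset\hull(\A)$, the feasibility constraint $x/t\in\hull_p(\A)$ in~\eqref{eq:gauge2Hull} is relaxed as $p$ increases, so the infimum over $t$ can only decrease, giving $\gauge_{\A,1}\ge\gauge_{\A,2}\ge\cdots$; and the stabilization $\hull_{d+1}(\A)=\hull_{d+2}(\A)=\cdots=\hull(\A)$ from~\eqref{eq:nested} (Carathéodory) gives the equalities $\gauge_{\A,d+1}=\gauge_{\A,d+2}=\cdots=\gauge_\A$, where the last equality uses that $\gauge_\A$ has exactly the hull-based definition~\eqref{eq:gaugeFcn1}. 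The parenthetical remark that $\{\gauge_{\A,p}\}_{p=1}^d$ may be nonconvex is not something to prove in general but is witnessed by the examples of Section~\ref{sec:old} (e.g.\ the spiral alphabet), so I would simply point to those. The main obstacle, such as it is, is being careful with the $\inf$-exchange in~(v) and with the attainment/compactness issues in~(ii) and in the biconjugation step~(vi); everything else is bookkeeping off the definitions and Proposition~\ref{prop:propsHullP}.
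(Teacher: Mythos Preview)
Your proposal is correct and follows essentially the same route as the paper: unfold the slice definition for~(i), use boundedness for~(ii), read~(iii)--(iv) off~\eqref{eq:gauge2}, compute the conjugate directly for~(v), biconjugate for~(vi), and invoke Proposition~\ref{prop:propsHullP} for~(vii). One small slip: in~(vi) you cite Assumption~\ref{a:alph}\ref{assumption:alphabet0} (symmetry), but the proposition only assumes~\ref{assumption:containsOrigin} and~\ref{assumption:bounded}; the bipolar/support-function identity you need requires only $0\in\hull(\A)$, not symmetry, so just adjust the citation.
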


\rev{Let us take a moment to parse the above result: Proposition~\ref{prop:gauge2EquivDefn}\ref{prop:gauge_p:nested} posits that the gauge$_p$ function coincides with the gauge function for $p> d$.} Moreover, even when $p\le d$, the gauge$_p$ function still preserves certain properties of the gauge function, see Proposition~\ref{prop:gauge2EquivDefn}\ref{prop:gauge_p:origin}-\ref{prop:gauge_p:envelop}.  \rev{Crucially, when $p\le d$, the gauge$_p$ {function} directly controls the sparsity level, as articulated in Proposition~\ref{prop:gauge2EquivDefn}\ref{prop:gauge_p:infeasible} and~\ref{prop:gauge_p:sparse}. This property of the gauge$_p$ function remedies the key shortcoming of the classical gauge function concerning the lack of sparsity of the outcome.} Indeed, in the negative examples {of} Section~\ref{sec:old}, the {classical} gauge function failed to enforce sparsity, i.e., its minimal decomposition failed to be sparse at all. \rev{In contrast,} any minimal decomposition of~$x$ that achieves $\gauge_{\A,p}(x)$ is $p$-sparse by Proposition~\ref{prop:gauge2EquivDefn}\ref{prop:gauge_p:sparse}. 
Put differently, in the negative examples highlighted of
Section~\ref{sec:old}, {at least one} of the inequalities in~\eqref{eq:nestedGauges} {was} strict. We will further investigate these key differences in Section~\ref{sec:exGaugep} by comparing the gauge and gauge$_p$ functions for several learning alphabets. 

\rev{Let us also describe a natural interpretation of \eqref{eq:nestedGauges}:} In view of~\cite[Definition~2.2]{kerdreux2017approximate} or~\cite{ekeland1999convex},~$\gauge_{\A,1}$ is the ``most nonconvex'' among the gauge$_p$ functions, followed by~$\gauge_{\A,2}$, {then $\gauge_{\A,3}$} and so on. {Here,} the nonconvexity of gauge$_p$ functions is measured by {their} distance from their shared convex envelope~$\gauge_\A$, {see Proposition~\ref{prop:gauge2EquivDefn}\ref{prop:gauge_p:envelop}.} Another interesting perspective is offered by the the approximate Carath\'eodory theorem of Maurey~\cite{kerdreux2017approximate,pisier1981notes}. \rev{In principle, this theorem can quantify just how well the sets~$\hull_p(\A)$ approximate~$\hull(\A)$. However, we will not further pursue these connections.} 

\subsection{Why \texorpdfstring{Gauge$_p$}{Gp} Function?}\label{sec:exGaugep}

We earlier reviewed the gauge function in Definition~\ref{defn:gauge} as the notion of statistical complexity at the heart of the classical gauge function theory. We then introduced the gauge$_p$ function in Definition~\ref{def:gauge_p} as a new device for measuring   the  complexity of statistical models.

To develop a better understanding {of these concepts}, {we} next compare the gauge and gauge$_p$ functions for {a few} {learning} alphabets. Our discussion in this section is limited to the statistical benefits of gauge$_p$ functions and we defer {their} computational {aspects} 
to Section~\ref{sec:algorithm}. 

\begin{example}[{\sc Sparsity}]\label{ex:sparsity}
\rev{Consider the alphabet
\begin{align}
    \A := \{ \pm e_i\}_{i=1}^d \subset \R^d,
\end{align}
which is central to sparse signal processing and {high-dimensional} statistical inference~\cite{hastie2015statistical,candes2008introduction,mallat2008wavelet}. 
Here, $e_i\in \R^d$ is the $i^{\text{th}}$~canonical vector, {with its $i^{\text{th}}$ entry equal to one and the remaining entries equal to zero.}  
For this alphabet, a} simple calculation shows that 
\begin{align}
    \gauge_{\A,p}(x) & :=
    \begin{cases}
    \gauge_\A(x) = \|x\|_1 & \|x\|_0 \le p\\
    \infty & \text{otherwise},
    \end{cases}
    \label{eq:gaugepSparseRevisit}
\end{align}
where $\|x\|_0$ is the number of nonzero entries of $x$. \rev{Above, $\|\cdot\|_1$ stands for the  $\ell_1$-norm.}
In particular, it follows from~\eqref{eq:gaugepSparseRevisit}  that  $\gauge_{\A,p}=\gauge_\A$ when $p\ge d$.
{On the other hand, if $\|x\|_0> p$, then note that~$\gauge_{\A,p}(x)=\infty$.
This property of the gauge$_p$ function will later enable us to limit the learning {outcome} to sufficiently sparse models.} If one replaces the~$\ell_1$-norm in~\eqref{eq:gaugepSparseRevisit} with the~$\ell_2$-norm, then the convex envelope of the resulting function would coincide with the~$k$-support norm~\cite{argyriou2012sparse}, an alternative to elastic net regularization~\cite{zou2005regularization}.
We close by noting that, for this choice of  the alphabet $\A$, the relation $\gauge_{\A,p}=\gauge_\A$ for $p\ge d$ is  an improvement over {the conservative but} general result  in~\eqref{eq:nestedGauges}.
\end{example}

In Example~\ref{ex:sparsity}, observe that the gauge$_p$ function of a $p$-sparse model coincides with the corresponding gauge function.  However, in general, recall from~\eqref{eq:nestedGauges} that $\gauge_{\A,p}(x) \ge \gauge_\A(x)$ for an {arbitrary} alphabet $\A$ and model $x$. As we will see {below}, this inequality might be strict, which precisely explains  the failures of the classical gauge function theory in Section~\ref{sec:old}. To see this, let us continue {below} with the {manifold} example. 

\begin{example}[{\sc Manifold models, continued}]\label{ex:manCntd}
When the alphabet $\A$ is an embedded submanifold, we saw in Section~\ref{sec:old} {that} the gauge function $\gauge_\A$ might lose vital geometric details about the manifold~$\A$. In contrast,~$\gauge_{\A,1}$ captures far more information about the manifold~$\A$. \rev{More specifically, the gauge$_1$ value of a model is infinity unless that model is $1$-sparse, see Proposition~\ref{prop:gauge2EquivDefn}\ref{prop:gauge_p:infeasible}. That is, $\gauge_{\A,1}(x^\n)=\infty$, unless $x^\n=tA$ for some $t\ge 0$ and atom~$A\in \A$.} {As an example}, for the $1$-sparse model $x^\n$  in~\eqref{eq:exModelManifold}, it is easy to verify that 
\begin{align}
    \gauge_\A(x^\n) \le \frac{5}{8\sqrt{2}} < 1 = \gauge_{\A,1}(x^\n).
    \label{eq:strictManifold}
\end{align}
The strict inequality above means that the machine~\eqref{eq:mainCvx} fails to find any $1$-sparse decomposition of~$x^\n$, which in turn explains the failure of the classical gauge function theory in Section~\ref{sec:old}. {To see why~\eqref{eq:strictManifold} holds, note that} the far-left inequality in~\eqref{eq:strictManifold} was established in~\eqref{eq:gaugeFailManEx} and the identity in~\eqref{eq:strictManifold} is evident from a visual inspection of Figure~\ref{fig:manifolds}.
\end{example}

One can also verify that the strict inequality between gauge and gauge$_p$ functions also holds in Example~\ref{ex:2} about sparse PCA, thus explaining the failure of the gauge function in that example. 
\vspace{5pt}

\notebox{gray!20}
{The gauge$_p$ function in Definition~\ref{def:gauge_p} is a simple generalization of the classical gauge function that better controls the sparsity of a model within the learning alphabet. In every failed example of the classical gauge function theory, we observed that $\gauge_{\A}(x^\n)<\gauge_{\A,r}(x^\n)$. Consequently, in these examples, any minimal decomposition of $x^\n$ in the alphabet~$\A$ that achieves~$\gauge_\A(x^\n)$ cannot be $r$-sparse. In contrast, any minimal decomposition  of~$x^\n$ that achieves~$\gauge_{\A,r}(x^\n)$ is $r$-sparse by definition, see the result in Proposition~\ref{prop:gauge2EquivDefn}\ref{prop:gauge_p:sparse}. Motivated by this {encouraging} observation, we next introduce a new learning machine  that replaces the gauge function in~\eqref{eq:mainCvx} with a gauge$_p$ function.}

\subsection{A New Learning Machine}\label{sec:newMachine}

\rev{As reviewed in Section~\ref{sec:old}, the classical gauge function theory is a theory for (convex) statistical learning  that leverages the gauge function to promote sparsity within the learning alphabet. However, the classical gauge function might fail to enforce sparsity  and, in such cases, the gauge function theory fails too. To overcome the  statistical limitations of the classical gauge function theory, this section introduces a new learning machine, the main building block of which is  the gauge$_p$ function in Definition~\ref{def:gauge_p}. As we will see shortly,} the new learning machine is a simple generalization of the convex machine~\eqref{eq:mainCvx} which allows us to tightly control the sparsity of the learning outcome. 

To begin, consider  the (inexact) setup
\begin{align}
    y := \L(x^\n)+e ,\qquad x^\n\in \cone(\slice^\n), \,  \slice^\n\in \Slice_r(\A), \qquad \|e\|_2 \le \epsilon,
    \label{eq:fullIneaxctModel}
    \tag{inexact}
\end{align}
\rev{where $y \in \R^m$ is the available measurement, and $\epsilon$ reflects the inexactness of the setup.} In signal processing, for example, $\epsilon$ quantifies the noise level in our measurements~$y$. In statistical inference,~$e$ in the setup~\eqref{eq:fullIneaxctModel} is also assigned a probability distribution~\cite{efron2016computer,van2000asymptotic}, but we avoid this additional layer of complexity here. In particular, when the distribution assigned to $e$ is light-tailed, \rev{it often suffices to take $\epsilon$ large enough and then work with the \rev{setup}~\eqref{eq:fullIneaxctModel}.}

To learn the  model $x^\n$ in~\eqref{eq:fullIneaxctModel} or its sparse decomposition {in the alphabet $\A$}, {consider}
\begin{align}
    \min_x \, \| \L(x)-y\|_2^2 \quad \text{subject to} \quad  \gauge_{\A,p}(x)  \le \gauge_{\A,p}(x^\n),
    \label{eq:mainNonCvx0}
\end{align}
which uses the  gauge$_p$ function $\gauge_{\A,p}$ \rev{defined in~\eqref{eq:gauge2Hull}. Using the equivalent definition of gauge$_p$ function in~\eqref{eq:gauge2}, we can also reformulate the above learning machine as}
\rev{\begin{align}
     \inf\l\{ \Big\| \sum_{i=1}^p c_i \L(A_i) - y \Big\|_2^2 : \sum_{i=1}^p c_i \le \gauge_\A(x^\n),\, c_i \ge 0,\, A_i\in \A,\, i\in [p]  \r\}, 
\label{eq:mainNonCvx}\tag{$\mathrm{gauge}_p$}
\end{align}
where the infimum is over $\{c_i\}_{i=1}^p$ and $\{A_i\}_{i=1}^p$.  It is the above reformulation of~\eqref{eq:mainNonCvx0} that we will primarily work with in this paper.} \rev{In particular, if $\{\wh{c}_i,\wh{A}_i\}_{i=1}^p$ is a solution of \eqref{eq:mainNonCvx}, then the learning outcome of~\eqref{eq:mainNonCvx} is the $p$-sparse model $\wh{x}=\sum_{i=1}^p \wh{c}_i \wh{A}_i$. }

The computational aspects of the new learning machine are discussed later in Section~\ref{sec:algorithm}, where we provide a tractable numerical scheme for solving the  optimization problem~\eqref{eq:mainNonCvx}. 
As detailed below, \rev{the proposed machine \eqref{eq:mainNonCvx} generalizes both the classical machine~\eqref{eq:mainCvx} and the familiar $\ell_0$-pursuit approach.} 
\rev{
\begin{remark}[{\sc A generalization of \eqref{eq:mainCvx}}]
    \label{rem:genGaugeRem}
    The proposed machine~\eqref{eq:mainNonCvx} reduces to the classical convex machine~\eqref{eq:mainCvx} for $p\ge d+1$. Indeed, this claim follows from comparing the equivalent formulations~\eqref{eq:mainCvx0} and~\eqref{eq:mainNonCvx0}, and then invoking~Proposition~\ref{prop:gauge2EquivDefn}\ref{prop:gauge_p:nested}. On the other hand, when $p\le d$, the machine~\eqref{eq:mainNonCvx} only searches over $p$-sparse models and thus always returns a $p$-sparse solution by design. More specifically, the learning output of \eqref{eq:mainNonCvx} is the $p$-sparse model $\wh{x}=\sum_{i=1}^p \wh{c}_i \wh{A}_i$, where $\{\wh{c}_i,\wh{A}_i\}_{i=1}^p$ is a solution of~\eqref{eq:mainNonCvx}. This guaranteed sparsity of the learning outcome immediately rectifies the key failure of the classical gauge function theory, i.e., the  lack of sparsity that we observed in Examples~\ref{ex:1} and~\ref{ex:2}. 
\end{remark}

Suppose that $\epsilon=0$ in~\eqref{eq:fullIneaxctModel}, \rev{i.e., for simplicity, consider the exact \rev{setup} $\L(x^\n)=y$.} \rev{Recall the familiar $\ell_0$-pursuit, }
\begin{equation}
    \min_x \,\, \| \L(x)-y\|_2^2 \,\,\textup{ subject to}\,\, x \,\, \textup{having a $p$-sparse decomposition in the alphabet $\A$},
    \label{eq:l0machine}
    \tag{$\ell_0$-pursuit}
\end{equation}
which \rev{finds a $p$-sparse model $\wt{x}$ that satisfies $\L(\wt{x})=y$.} \rev{For instance, in the context of Example~\ref{ex:1}, the machine~\eqref{eq:l0machine} reduces to the familiar optimization problem in~\cite{bertsimas2020sparse}:
\begin{equation}
    \min_x \,\, \| \L(x)-y\|_2^2 \,\,\textup{ subject to}\,\, \|x\|_0\le p.
\end{equation}}
\begin{remark}[{\sc\rev{A generalization of the $\ell_0$-pursuit}}]\label{rem:sparsityPure} 
The new machine also generalizes the well-known $\ell_0$-pursuit:
\rev{For $\epsilon=0$, recall from} Remark~\ref{rem:genGaugeRem} \rev{that} the \rev{proposed} machine~\eqref{eq:mainNonCvx} finds a $p$-sparse model~$\wh{x}$ \rev{that satisfies}
$\L(\wh{x})=y$ \rev{and}~$\gauge_{\A,p}(\wh{x})\le \gauge_{\A,p}(x^\n)$.  
When~$p=r$, it not difficult to verify that the \rev{new} machine~\eqref{eq:mainNonCvx} \rev{reduces to} \eqref{eq:l0machine}, provided that~$\L$ is an injective map when \rev{its domain is} restricted to $r$-sparse models. This \rev{restricted injectivity} assumption is reasonable because \rev{the true model}~$x^\n$ would not be identifiable otherwise. (Recall from~\eqref{eq:fullIneaxctModel} that~$r$ denotes the sparsity level of the true model~$x^\n$.) \rev{When $p>r$, however, a key} advantage of \rev{the new machine}~\eqref{eq:mainNonCvx} 
is that the former is regularized with the gauge$_p$ function. Indeed, if~$p$ is relatively large, then the operator~$\L$ might not be injective when restricted to~$p$-sparse models. Consequently, 
\rev{searching over $p$-sparse models might} fail to \rev{recover} the true model~$x^\n$. 
\end{remark}

\newpage
We summarize this subsection so far in the following note: \vspace{3mm}} 

\notebox{gray!20}{
\rev{The proposed machine~\eqref{eq:mainNonCvx} generalizes both of the familiar machines~\eqref{eq:mainCvx} and~\eqref{eq:l0machine}. That is, \eqref{eq:mainNonCvx} interpolates two extremes: 

\begin{enumerate}[label=(\roman*), itemsep = 0mm, topsep = 0mm, leftmargin = 12mm]
    \item \rev{In one extreme}, when~$p\ge d+1$, the \rev{new} machine~\eqref{eq:mainNonCvx} reduces to the \rev{classical} convex machine~\eqref{eq:mainCvx}, see Remark~\ref{rem:genGaugeRem}.
    \item \rev{In} the other \rev{extreme}, when~$p=r$, the \rev{new} machine~\eqref{eq:mainNonCvx} coincides with~\eqref{eq:l0machine}, provided that~$\L$ is an injective map when restricted to $r$-sparse models. See Remark~\ref{rem:sparsityPure}. Recall from~\eqref{eq:fullIneaxctModel} that~$r$ denotes the sparsity level of the true model~$x^\n$.
\end{enumerate}

\rev{As we decrease the value of~$p$ from $d+1$ towards $r$}, the  machine~\eqref{eq:mainNonCvx} unlocks a range of \rev{potentially} new statistical and computational trade-offs. Informally speaking, as~$p$ decreases, the  statistical accuracy of the new machine~\eqref{eq:mainNonCvx}  improves and we will quantify this improvement \rev{later in this section}.
\rev{However,} the computational trade-offs \rev{of~\eqref{eq:mainNonCvx}} are more complex and \rev{we defer their discussion to} Section~\ref{sec:algorithm}.}}

\vspace{2mm}

In addition to generalizing~\eqref{eq:mainCvx} \rev{and~\eqref{eq:l0machine}}, the new machine~\eqref{eq:mainNonCvx} can also be interpreted as a natural extension of the Burer-Monteiro idea~\cite{burer2005local}: 

\begin{remark}[{\sc{\rev{Generalization of the Burer-Monteiro factorization}}}]\label{rem:bm-rem} 
A predecessor of the proposed machine~\eqref{eq:mainNonCvx}  appears in the context of matrix factorization. To explain their connection, for simplicity, consider the optimization problem
\begin{align}
    \min_{x}\,\,\|\L(x)-y\|_2^2\,\, \textup{ subject to}\,\, \tr(x) \le \gamma^2  \,\, \textup{ and}\,\, x\in \R^{d\times d}\, \textup{  is positive semi-definite},
    \label{eq:msensing}
\end{align}
where {$\L:\R^{d\times d}\rightarrow \R^m$} is a linear operator and $\gamma\ge 0$. Because~$x$ is {a} positive semi-definite {matrix} above,  $\tr(x)$   coincides  with the nuclear norm of~$x$ and the problem~\eqref{eq:msensing} is therefore a  variant of the (convex) learning machines {that are} widely studied in matrix completion and sensing~\cite{davenport2016overview}. Note that the {direct} computational cost of solving problem~\eqref{eq:msensing} grows  rapidly as the dimension~$d_1$ grows. {Instead,} it is common to solve the Burer-Monteiro factorization of problem~\eqref{eq:msensing}. More specifically, for an integer $p\le d$, the factorized version of problem~\eqref{eq:msensing} is
\begin{align}
    \min_{u\in \R^{d\times p}}\,\, \|\L(uu^\top)-y\|_2^2 \,\, \textup{ subject to}\,\, \|u\|_{\mathrm{F}} \le \gamma,
    \label{eq:factorizedRem}
\end{align}
where $\|\cdot\|_{\mathrm{F}}$ stands for the Frobenius norm. {Above,we also} used the fact that $\tr(uu^\top)=\|u\|_{\mathrm{F}}^2$. When $p$ is sufficiently small, {solving the} problem~\eqref{eq:factorizedRem} can offer substantial savings in computational speed and storage, compared to a direct implementation of the problem~\eqref{eq:msensing}. This idea has been {extensively studied for} matrix-valued learning problems~\cite{burer2005local,chi2019nonconvex,boumal2020deterministic,haeffele2015global,recht2010guaranteed,li2016super}.

It is not difficult to verify that the factorized problem~\eqref{eq:factorizedRem} coincides with~\eqref{eq:mainNonCvx} for the choice of alphabet~$\A:=\{uu^\top: \|u\|_2=1\}$. In this sense, the proposed machine~\eqref{eq:mainNonCvx} extends the successful idea of Burer-Monteiro factorization to any learning alphabet. 
\end{remark}

\rev{It is worth noting that the Burer-Monteiro factorization is primarily concerned with those values of $p$ for which~\eqref{eq:msensing} and~\eqref{eq:factorizedRem} have the same minimizers. In contrast, as to be seen shortly, we are often interested in those values of~$p$ for which the two problems have different minimizers. 

Let us also add that it is often possible to remove the constraints in problem~\eqref{eq:factorizedRem}, provided that {$p \lesssim \rank(x^\n)$.} Here,~$x^\n$ is the hidden model that satisfies~$\L(x^\n)\approx y$ \rev{and $\lesssim$ hides any constant factors.} In this regime, {which is} known as the {thin}  Burer-Monteiro factorization, a generic operator $u\rightarrow\L(uu^\top)$ is often injective, thus obviating the need for regularization \rev{with} $\|u\|_{\mathrm{F}}$, see for instance~\cite{eftekhari2020implicit}. Finally, let us revisit the failed applications of the classical gauge function theory in Section~\ref{sec:old}.}

\begin{example}[{\sc Examples~\ref{ex:1} and~\ref{ex:2}, revisited}] 
In these negative toy examples, we saw in Section~\ref{sec:old} that the convex machine~\eqref{eq:mainCvx} failed to find any $r$-sparse decomposition of the model $x^\n$ in the learning alphabet~$\A$. In contrast, it is not difficult to verify that the new machine~\eqref{eq:mainNonCvx}  successfully finds the (unique) $r$-sparse decomposition of $x^\n$ in each example ($p=r$ and $\L = \mathrm{id}$). Here,~$\mathrm{id}$ stands for the identity operator.
In general, the machine~\eqref{eq:mainNonCvx} always returns a  $p$-sparse {model}, see Proposition~\ref{prop:gauge2EquivDefn}\ref{prop:gauge_p:sparse}. \end{example}

\subsection{Statistical Guarantees}\label{sec:newGaugeTheory}

In Section~\ref{sec:newMachine}, we introduced the machine~\eqref{eq:mainNonCvx} as a generalization of the convex machine~\eqref{eq:mainCvx}.
This section will develop some statistical guarantees for {this} new learning machine as part of a generalized {gauge function} theory. The first  result of this section, Lemma~\ref{lem:nonCvxLearn} below, provides  certificates of correctness for the new machine~\eqref{eq:mainNonCvx}. \rev{Lemma~\ref{lem:noisyLearnGauge2} later extends Lemma~\ref{lem:nonCvxLearn} to account for noise. Lastly, Theorem~\ref{thm:constructOpt} shows that these certificates exist in certain generic learning problems. 

Let us start with a lemma that posits that the machine~\eqref{eq:mainNonCvx} succeeds if certain  certificates of correctness exist. This result is reminiscent of Lemma~\ref{lem:duality} in the classical gauge function theory.}

\begin{lem}[{\sc Correctness certificates, \rev{exact setup}}]\label{lem:nonCvxLearn}
With $\epsilon=0$, consider the model~$x^\n$ in~\eqref{eq:fullIneaxctModel} and {suppose that} the alphabet~$\A$ satisfies Assumptions~\ref{a:alph}\ref{assumption:containsOrigin} and~\ref{assumption:bounded}.  If $x^\n=0$, then the machine~\eqref{eq:mainNonCvx} correctly returns~$0$. \rev{That is, $x^\n=\sum_{i=1}^p \wh{c}_i\wh{A}_i = 0$, where $\{\wh{c}_i,\wh{A}_i\}_{i=1}^p$ is a solution of the optimization problem~\eqref{eq:mainNonCvx}.}

Otherwise, \rev{suppose that} $p\ge r$, where $r$ is the sparsity level of $x^\n$ in~\eqref{eq:fullIneaxctModel}. For every slice~$\slice\in \Slice_{p}(\A)$, \rev{suppose also} that (one of) the following holds: 
\begin{enumerate}[label=(\roman*), itemsep = 0mm, topsep = 0mm, leftmargin = 12mm]
        
    \item If $x^\n/\gauge_{\A,p}(x^\n) \in \slice$, the linear map~$\L$  is injective \rev{when its domain is restricted to {the slice} ${\slice}$.}
    
    \item If $x^\n / \gauge_{\A,p}(x^\n) \notin \slice$,  the point $x^\n/\gauge_{p,\A}(x^\n)$ and the slice $\slice$ are separated along $\range(\L^*)$, i.e., there exists a correctness certificate~$Q_\slice\in \range(\L^*)$ such that
    \begin{align}
        & \l\langle Q_\slice, x- \frac{x^\n}{\gauge_{\A,p}(x^\n)} \r\rangle <0,
        \qquad \forall x\in \slice.
        \label{eq:certNonCvx}
    \end{align}
\end{enumerate}
Then the machine~\eqref{eq:mainNonCvx} returns~$x^\n$ and a $p$-sparse decomposition of~$x^\n$ in~$\A$.
 \rev{That is, $x^\n=\sum_{i=1}^p \wh{c}_i \wh{A}_i$, where $\{\wh{c}_i,\wh{A}_i\}_{i=1}^p$ is a solution of the optimization problem~\eqref{eq:mainCvx}.}
\end{lem}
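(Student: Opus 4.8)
The plan is to verify that the candidate point $x^\n$ is simultaneously feasible for \eqref{eq:mainNonCvx} and a minimizer with the stated objective value $0$, and then to argue uniqueness of the decomposition via the injectivity hypothesis. First I would dispose of the trivial case $x^\n=0$: since $0\in\A$ by Assumption~\ref{a:alph}\ref{assumption:containsOrigin}, the all-zero decomposition is feasible, achieves objective value $0$, and hence is optimal; any optimal decomposition then satisfies $\sum_i\wh c_i\wh A_i=\L^{-1}(0)\cap(\text{feasible set})$, but more simply the objective forces $\L(\sum_i\wh c_i\wh A_i)=y=\L(0)=0$, which combined with feasibility gives the claim. For the main case, let $\gamma^\n:=\gauge_{\A,p}(x^\n)$, which is finite since $x^\n\in\cone(\slice^\n)$ with $\slice^\n\in\Slice_r(\A)\subseteq\Slice_p(\A)$ (using $p\ge r$ and Proposition~\ref{prop:propsHullP}). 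Take any decomposition achieving $\gamma^\n$; it is $p$-sparse by Proposition~\ref{prop:gauge2EquivDefn}\ref{prop:gauge_p:sparse}, hence feasible for \eqref{eq:mainNonCvx}, and it yields $\L(x^\n)-y=0$, so the optimal value of \eqref{eq:mainNonCvx} is $0$ and $x^\n$ (with this decomposition) is a solution.

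The substance is to show that \emph{every} solution of \eqref{eq:mainNonCvx} recovers $x^\n$. So let $\{\wh c_i,\wh A_i\}_{i=1}^p$ be any solution, set $\wh x:=\sum_i\wh c_i\wh A_i$, and let $\slice:=\hull(\{\wh A_i\}_{i=1}^p\cup\{0\})\in\Slice_p(\A)$ be the associated slice. Since the optimal value is $0$ we have $\L(\wh x)=y=\L(x^\n)$, and by feasibility $\gauge_{\A,p}(\wh x)\le\gamma^\n$, so $\wh x/\gamma^\n\in\hull_p(\A)$ and in fact $\wh x/\gamma^\n\in\slice$ after rescaling the $\wh c_i$ (note $\sum_i\wh c_i\le\gamma^\n$ means $\wh x$ is a subconvex combination, so $\wh x/\gamma^\n$ lies in $\hull(\{\wh A_i\}\cup\{0\})=\slice$). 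Now I apply the hypothesis to this particular slice $\slice$. If $x^\n/\gamma^\n\in\slice$, then both $\wh x/\gamma^\n$ and $x^\n/\gamma^\n$ lie in $\slice$ and have equal image under $\L$; since $\slice-\slice\subseteq\lin(\slice)$ (or one restricts $\L$ to the slice directly and uses that $\wh x/\gamma^\n-x^\n/\gamma^\n$ lies in the affine span), injectivity of $\L$ on $\slice$ forces $\wh x=x^\n$. If instead $x^\n/\gamma^\n\notin\slice$, I derive a contradiction from the separation certificate: since $Q_\slice\in\range(\L^*)$, write $Q_\slice=\L^*(w)$; then $\langle Q_\slice,\wh x/\gamma^\n-x^\n/\gamma^\n\rangle=\langle w,\L(\wh x/\gamma^\n-x^\n/\gamma^\n)\rangle=\langle w,0\rangle=0$, but $\wh x/\gamma^\n\in\slice$ makes the left side strictly negative by \eqref{eq:certNonCvx} --- contradiction. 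Hence this case cannot occur for the slice attached to an optimal decomposition, and we are left with $\wh x=x^\n$.

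The one point requiring care --- and the likely main obstacle --- is the bookkeeping around the constraint $\sum_i\wh c_i\le\gamma^\n$ versus membership in the slice: one must check that $\wh x/\gamma^\n$ genuinely lands in $\slice=\hull(\{\wh A_i\}\cup\{0\})$ rather than merely in $\hull_p(\A)$, which is exactly where the inclusion of the origin (Assumption~\ref{a:alph}\ref{assumption:containsOrigin}) and the ``slack'' coefficient $\gamma^\n-\sum_i\wh c_i$ absorbed by the atom $0$ are used. A secondary subtlety is making sure the separation inequality \eqref{eq:certNonCvx}, which is stated for all $x\in\slice$, applies at the specific point $\wh x/\gamma^\n$; this is immediate once the previous point is established. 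Everything else is a direct chase through the definitions in Definition~\ref{def:gauge_p} and Proposition~\ref{prop:gauge2EquivDefn}, parallel to the proof of Lemma~\ref{lem:duality}.
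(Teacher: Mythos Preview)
Your proposal is correct and follows essentially the same route as the paper's proof: both arguments pick a slice $\slice\in\Slice_p(\A)$ containing $\wh x/\gauge_{\A,p}(x^\n)$, then split into the two cases $x^\n/\gauge_{\A,p}(x^\n)\in\slice$ (handled by injectivity) and $x^\n/\gauge_{\A,p}(x^\n)\notin\slice$ (handled by pairing the certificate $Q_\slice=\L^*(q_\slice)$ against $\L(\wh x-x^\n)=0$ to force a contradiction). The only cosmetic difference is that the paper obtains $\wh x/\gauge_{\A,p}(x^\n)\in\slice$ by first placing $\wh x/\gauge_{\A,p}(\wh x)$ in some slice via \eqref{eq:far-right} and then scaling by $t=\gauge_{\A,p}(\wh x)/\gauge_{\A,p}(x^\n)\in[0,1]$ using $0\in\slice$, whereas you build $\slice$ directly from the optimal atoms $\{\wh A_i\}$ and absorb the slack $\gauge_{\A,p}(x^\n)-\sum_i\wh c_i$ into the origin---these are equivalent bookkeeping devices.
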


The proof of Lemma~\ref{lem:nonCvxLearn} largely mirrors that of Lemma~\ref{lem:duality} for the convex machine~\eqref{eq:mainCvx}. \rev{This symmetry justifies our unusual perspective when we reviewed the classical theory in Section~\ref{sec:old}.} In Lemma~\ref{lem:nonCvxLearn}, the correctness certificates for the machine~\eqref{eq:mainNonCvx} replace the correctness certificate for the convex machine~\eqref{eq:mainCvx}. The remark below compares the two types of certificates. 

\begin{remark}[{\sc Convex versus nonconvex learning}]\label{rem:cvxVsNoncvx}

\rev{When $p > d$, the new machine~\eqref{eq:mainNonCvx} reduces to the convex machine~\eqref{eq:mainCvx}, see Remark~\ref{rem:genGaugeRem}.}  From Lemma~\ref{lem:duality}, recall that the convex machine \eqref{eq:mainCvx} succeeds if  $x^\n/\gauge_\A(x^\n)$ is separated along $\range(\L^*)$ from the rest of $\hull(\A)$. However, when $p\le d$, the new machine~\eqref{eq:mainNonCvx} succeeds under the weaker requirement that $x^\n/\gauge_{\A,p}(x^\n)$ is separated along $\range(\L^*)$ from every  slice of $\hull(\A)$ formed by at most $p$ atoms. 
\end{remark}

\rev{For a family of generic learning problems, we will later construct the correctness certificates  in Lemma~\ref{lem:nonCvxLearn}. This will, in turn, guarantee the success of the new machine~\eqref{eq:mainNonCvx}.} It is not difficult to extend Lemma~\ref{lem:nonCvxLearn} to account for an inexact setup, i.e., the case where $\epsilon >0$ in~\eqref{eq:fullIneaxctModel}.

\rev{
\begin{lem}[{\sc Correctness certificates, inexact setup}]
\label{lem:noisyLearnGauge2}
Consider the model $x^\n$ in~\eqref{eq:fullIneaxctModel} and {suppose that} the alphabet~$\A$ satisfies Assumptions~\ref{a:alph}\ref{assumption:containsOrigin} and~\ref{assumption:bounded}. If $\gauge_{\A,p}(x^\n)=0$, then the machine~\eqref{eq:mainNonCvx} correctly returns $0$. That is, $x^\n=\sum_{i=1}^p \wh{c}_i \wh{A}_i=0$, where $\{\wh{c}_i,\wh{A}_i\}_{i=1}^p$ is a solution of the optimization problem~\eqref{eq:mainNonCvx}. Otherwise, suppose that $p\ge r$,
where $r$ is the sparsity level of $x^\n$ in~\eqref{eq:fullIneaxctModel}. For every slice $\slice\in \Slice_p(\A)$, suppose also that (one of) the following holds:
    
\begin{enumerate}[label=(\roman*), itemsep = 0mm, topsep = 0mm, leftmargin = 12mm]
        \item If $x^\n/\gauge_{\A,p}(x^\n) \in \slice$, the linear map $\L$ is injective when restricted to the slice $\slice$ and  its smallest singular value is at least $\s>0$.

        \item  If $x^\n / \gauge_{\A,p}(x^\n) \notin \slice$,  the point $x^\n/\gauge_{p,\A}(x^\n)$ and the slice  $\slice$    are well-separated along $\range(\L^*)$, i.e., there exists
        a correctness certificate $Q_\slice\in \R^d$ and a vector $q_\slice \in \R^m$ such that 
    \begin{align}
        & Q_\slice = \L^*(q_\slice), \nonumber\\
        & \|q_\slice\|_2 \le q, \nonumber\\
        & \l\langle Q_\slice, x- \frac{x^\n}{\gauge_{\A,p}(x^\n)} \r\rangle < - \g \l\| x - \frac{x^\n}{\gauge_{\A,p}(x^\n)} \r\|_2, 
        \qquad \forall x\in \slice.
        \label{eq:certNonCvxNoise}
    \end{align}
    \end{enumerate}
    Finally, let the $p$-sparse model $\wh{x}$ be a learning outcome of the machine~\eqref{eq:mainNonCvx}. That is, $\wh{x}=\sum_{i=1}^p \wh{c}_i \wh{A}_i$, where $\{\wh{c}_i,\wh{A}_i\}_{i=1}^p$ is a solution of the optimization problem~\eqref{eq:mainNonCvx}.
    Then it holds that 
    \begin{align}
        \| \wh{x} - x^\n \|_2 \le  
        \max\l( \frac{2\epsilon}{\s},   \frac{2\epsilon q  }{\g} \r).
        \label{eq:appxCorrectNoisy}
    \end{align}
\end{lem}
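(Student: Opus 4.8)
The plan is to mirror the proof of the exact-setup lemma (Lemma~\ref{lem:nonCvxLearn}), upgrading the strict separation and injectivity hypotheses to their quantitative ``well-separated'' counterparts so as to control the approximation error under noise. Let $\wh{x} = \sum_{i=1}^p \wh{c}_i \wh{A}_i$ be a learning outcome of \eqref{eq:mainNonCvx}, and let $\wh{\slice} \in \Slice_p(\A)$ be the slice spanned by the atoms $\{\wh{A}_i\}_{i=1}^p$ together with the origin, so that $\wh{x}/\gauge_{\A,p}(x^\n) \in \wh{\slice}$ because $\gauge_{\A,p}(\wh{x}) \le \gauge_{\A,p}(x^\n)$ by feasibility. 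The case $\gauge_{\A,p}(x^\n) = 0$ is immediate from Proposition~\ref{prop:gauge2EquivDefn}\ref{prop:gauge_p:origin}, since then $x^\n = 0$ and the objective $\|\L(x)-y\|_2^2 = \|\L(x) - e\|_2^2$ is minimized (to within $\epsilon^2$) at $x=0$; in fact feasibility forces $\gauge_{\A,p}(\wh x)\le 0$, hence $\wh x=0$. So assume $\gauge_{\A,p}(x^\n) > 0$ and write $\bar{x}^\n := x^\n/\gauge_{\A,p}(x^\n)$ for brevity.

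The first step is a \emph{dichotomy on the slice $\wh{\slice}$}. By hypothesis exactly one of (i), (ii) holds for $\wh{\slice}$. If (ii) held for $\wh{\slice}$, then $\bar{x}^\n \notin \wh{\slice}$, and the certificate $Q_{\wh{\slice}} = \L^*(q_{\wh{\slice}})$ with $\|q_{\wh\slice}\|_2 \le q$ would give, evaluating \eqref{eq:certNonCvxNoise} at $x = \wh{x}/\gauge_{\A,p}(x^\n) \in \wh\slice$,
\begin{align*}
    \Big\langle q_{\wh\slice},\, \L\big(\tfrac{\wh x - x^\n}{\gauge_{\A,p}(x^\n)}\big) \Big\rangle
    = \Big\langle Q_{\wh\slice},\, \tfrac{\wh x - x^\n}{\gauge_{\A,p}(x^\n)} \Big\rangle
    < -\g\,\big\| \tfrac{\wh x - x^\n}{\gauge_{\A,p}(x^\n)} \big\|_2 .
\end{align*}
On the other hand optimality of $\wh x$ against the feasible competitor $x^\n$ gives $\|\L(\wh x) - y\|_2 \le \|\L(x^\n)-y\|_2 = \|e\|_2 \le \epsilon$, so $\|\L(\wh x) - \L(x^\n)\|_2 \le 2\epsilon$, and Cauchy–Schwarz bounds the left side above by $q\cdot 2\epsilon/\gauge_{\A,p}(x^\n)$. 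Combining, $\|\wh x - x^\n\|_2 < 2\epsilon q/\g$, which is within the claimed bound \eqref{eq:appxCorrectNoisy}. If instead (i) holds for $\wh{\slice}$, then $\bar{x}^\n \in \wh\slice$, hence $x^\n \in \cone(\wh\slice)$ and also $\wh x \in \cone(\wh\slice)$, so $\wh x - x^\n$ lies in the linear span of $\wh\slice$; the smallest-singular-value bound $\sigma$ for $\L$ restricted to (the span of) $\wh\slice$ yields $\sigma\|\wh x - x^\n\|_2 \le \|\L(\wh x - x^\n)\|_2 \le 2\epsilon$, giving $\|\wh x - x^\n\|_2 \le 2\epsilon/\sigma$, again within \eqref{eq:appxCorrectNoisy}. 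Taking the maximum over the two cases completes the bound.

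The one subtlety — and the step I expect to require the most care — is ensuring the dichotomy is applied to the \emph{correct} slice and that in case (i) the difference $\wh x - x^\n$ genuinely lies in the subspace on which $\L$ is assumed well-conditioned. The hypothesis that \emph{for every} $\slice \in \Slice_p(\A)$ one of (i),(ii) holds is what makes the argument go through regardless of which atoms the solver returns: the returned slice $\wh\slice$ is some element of $\Slice_p(\A)$, so it satisfies (i) or (ii). One must also check the edge case where $\gauge_{\A,p}(x^\n)$-achieving decomposition and $\wh x$ share the slice trivially (e.g.\ $\wh x = 0$), handled by the convention $0/0 = 0$ and Proposition~\ref{prop:gauge2EquivDefn}\ref{prop:gauge_p:infeasible}–\ref{prop:gauge_p:sparse}, which guarantee $\gauge_{\A,p}(\wh x) < \infty$ so that $\wh x$ is genuinely $p$-sparse and lies in a bona fide slice. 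With these bookkeeping points settled, the estimate \eqref{eq:appxCorrectNoisy} follows exactly as above, and setting $\epsilon = 0$ recovers Lemma~\ref{lem:nonCvxLearn} as the special case $\sigma, \g$ arbitrary.
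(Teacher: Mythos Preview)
Your proposal is correct and follows exactly the route the paper intends: the paper does not spell out a proof of Lemma~\ref{lem:noisyLearnGauge2} but merely remarks that it is a straightforward extension of the proof of Lemma~\ref{lem:nonCvxLearn}, and your argument is precisely that extension---pick the slice $\wh{\slice}$ containing $\wh{x}/\gauge_{\A,p}(x^\n)$, use optimality and feasibility of $x^\n$ to get $\|\L(\wh{x}-x^\n)\|_2\le 2\epsilon$, and then in case~(i) invoke the singular-value lower bound on $\lin(\wh{\slice})$ while in case~(ii) pair the quantitative separation inequality with Cauchy--Schwarz on $\langle q_{\wh\slice},\L(\wh{x}-x^\n)\rangle$. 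The only cosmetic point is that ``injective on the slice with smallest singular value $\sigma$'' should indeed be read as a bound on $\lin(\slice)$, exactly as you do and as the paper does in the exact case.
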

}


{Finding}  the  certificates prescribed in Lemma~\ref{lem:nonCvxLearn} is a problem-specific task, {similar to} the {classical} gauge function theory {in Section~\ref{sec:old}}. Nevertheless, this section provides a somewhat general recipe for constructing these correctness certificates. To that end, {we begin with} a few definitions. 

\begin{defn}[{\sc Angle of a cone}]\label{defn:angleConeDefn} The angle $\angle \K \rev{\in [0,\pi] }$ of a closed cone $\K \subset \R^d$ satisfies
\begin{align}
    \cos(\angle\K) & := \max_{u\in \K \cap \S^{d-1} } \min_{u'\in \K \cap \S^{d-1} } \langle u,u' \rangle,
    \label{eq:coneAngleDefn}
\end{align}
where $\S^{d-1}$ denotes the unit sphere in $\R^d$. 
\end{defn}

\rev{As an example, the positive orthant in $\R^2$ has the angle $\pi/4$. That is, $\angle \R_+^2=\pi/4$. We also recall the Hausdorff metric below~\cite{rockafellar2009variational}. This concept is visualized in Figure~\ref{fig:coneAngle}.}

\rev{ \begin{defn}[{\sc Hausdorff distance}]\label{defn:hausdorff}
The Hausdorff distance of two sets $\mathcal{S}$ and $\mathcal{S}'$  is
\begin{align}
    \dist_H(\mathcal{S},\mathcal{S}') := \max\l( \max_{s\in \mathcal{S}} \min_{s'\in \mathcal{S}'} \|s-s'\|_2\, ,\,  \max_{s'\in \mathcal{S}'} \min_{s\in \mathcal{S}} \|s-s'\|_2 \r)
\end{align}
\end{defn}}

Next,  recall that the metric entropy of a set is a measure for how large or complex that set is~\cite{ledoux2013probability}.



\begin{figure}
    \centering
  \includegraphics[width=.25\textwidth]{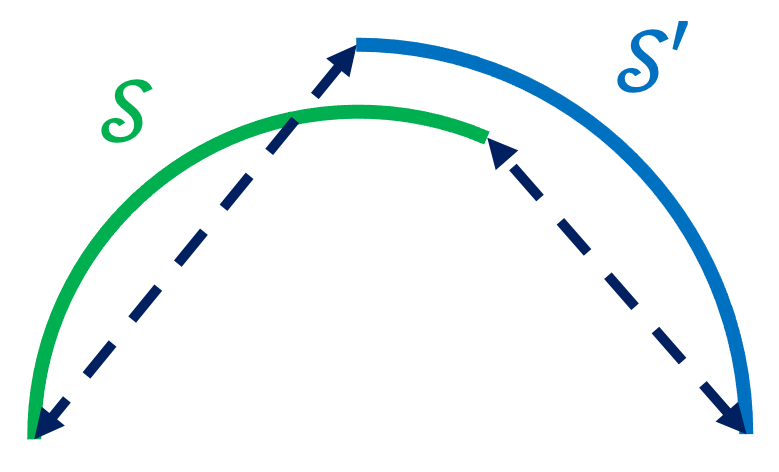} \hspace{2cm}
    \includegraphics[width=.2\textwidth,height=.2\textwidth]{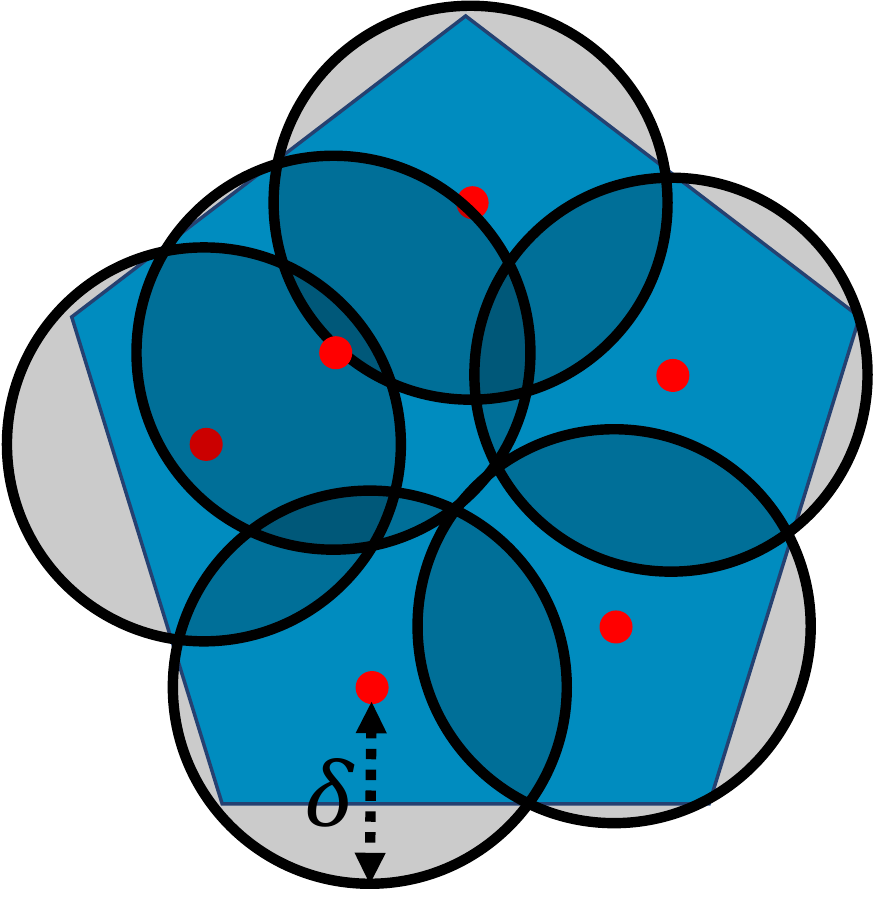}
    \\
    \includegraphics[width=.38    \textwidth,height=.2\textwidth]{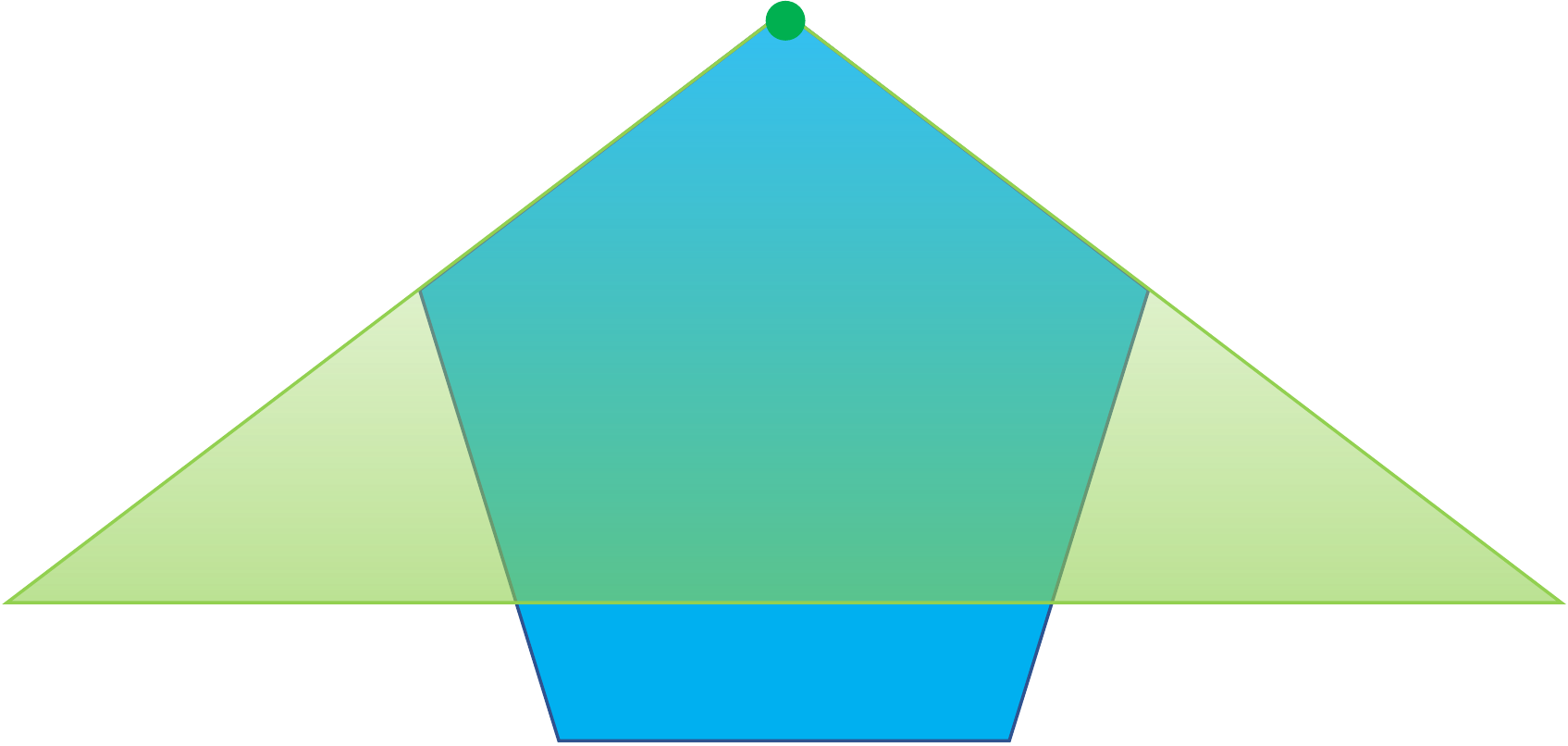}
    \hspace{1.5cm}
    \includegraphics[width=.2\textwidth,height=.2\textwidth]{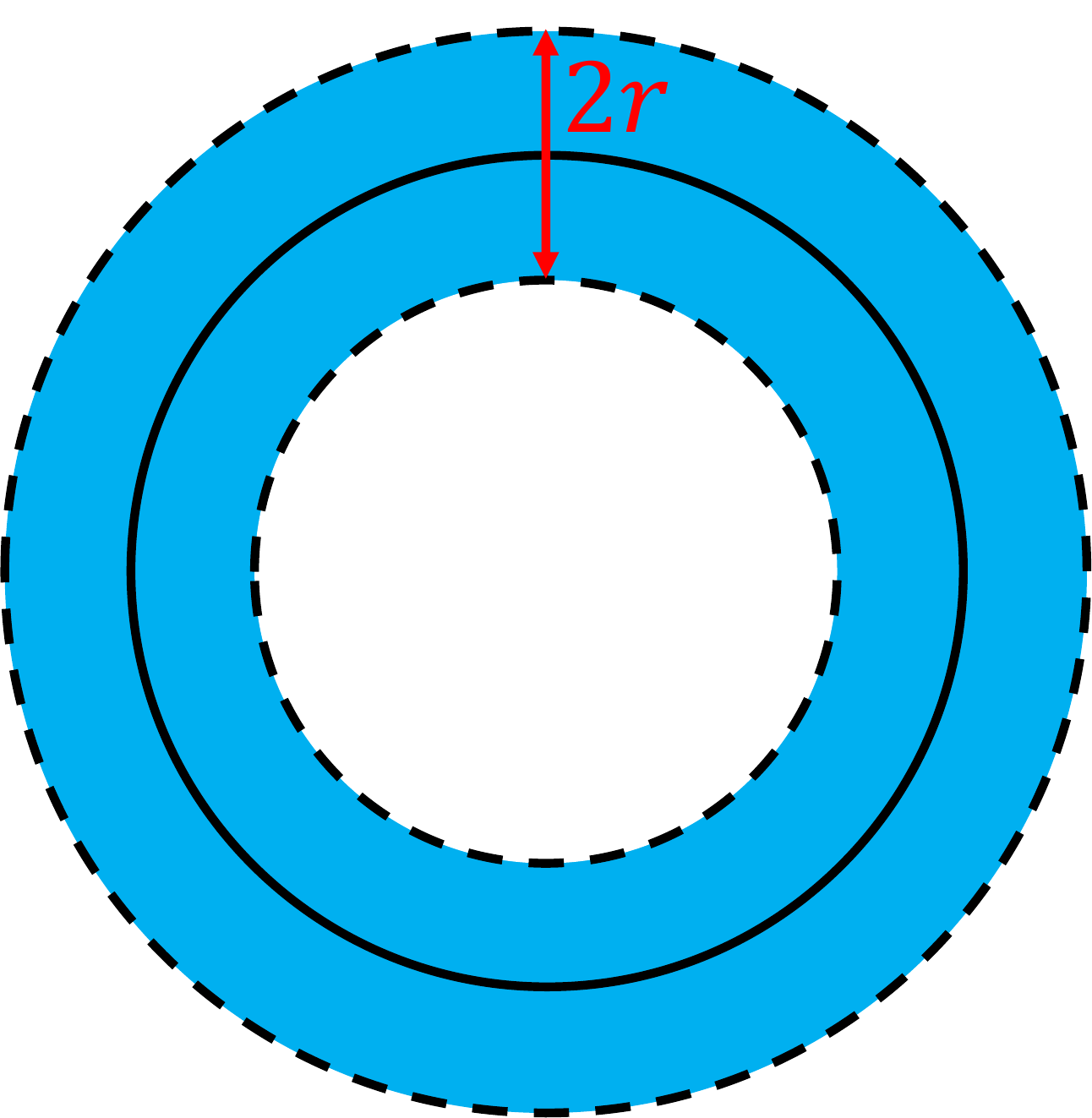}
    
    \caption{\footnotesize{\rev{This figure visualizes some of the geometric concepts in this work: 
    In the top left panel, the Hausdorff distance between the blue and green arcs equals the length of the longer dashed line segment, see Definition~\ref{defn:hausdorff}.
    The (red) dots in the top right panel form a $\delta$-net for the (blue) trapezoid, see Definition~\ref{defn:entropy}. The (green) triangle in the bottom left panel is a section of the tangent cone of the (blue) trapezoid at the (green) dot, see Proposition~\ref{prop:criticalSpecial}. Lastly, in the bottom right panel, the reach of the circle in solid black is its radius, see Definition~\ref{defn:reach}. The blue annulus shows the normal bundle of radius $r$. } }}
    \label{fig:coneAngle}
\end{figure}


\begin{defn}[{\sc Metric entropy}]\label{defn:entropy}
\rev{Consider a metric space with distance $\dist$.
In this space, consider a set $\mathcal{I}$ and its subset $\mathcal{I}'$ that satisfy the following: For every $x\in \mathcal{I}$, there exists
$x'\in \mathcal{I}'$ such that $\dist(x,x')\le \d$. The set $\mathcal{I}'$ is called a $\d$-net for the set $\mathcal{I}$ with respect to the metric $\dist$.} 
\rev{There are often many $\d$-nets for a set $\mathcal{I}$. In particular, let~$\cover(\mathcal{I},\dist,\delta)$ denote a minimal $\d$-net for~$\mathcal{I}$, i.e., the one with the smallest size. The logarithm of the size of this net is called the metric entropy of the set $\mathcal{I}$, denoted by $\entropy(\mathcal{I},\dist,\d)$. That is, $\entropy(\mathcal{I},\dist,\d):=\log [|\cover(\mathcal{I},\dist,\d)|]$.} 
\end{defn}

\rev{As an example, the red dots in Figure~\ref{fig:coneAngle} form a $\delta$-net for the (blue) trapezoid.}
In linear inverse problems, it is not uncommon for the linear operator to be random, see for instance~\cite{candes2008introduction,davenport2016overview,ahmed2013blind}. For our purposes, we quantify the randomness of the operator $\L$ as follows.

\begin{defn}[{\sc Probabilistic {restricted injectivity}}] \label{def:isometry}
\rev{The random linear operator $\L:\R^d\rightarrow\R^m$ satisfies the probabilistic $\d$-restricted isometry property if $\L$ is a near-isometry when restricting its domain to a low-dimensional subspace with high-probability. More specifically, for $\d\in [0,1)$, we say that $\L$ satisfies the probabilistic $\d$-RIP if an arbitrary subspace  $\mathcal{U}\subset \R^d$ with $\dim(\mathcal{U})\le C m \d^2$ satisfies
\begin{align}
    \rev{(1-\delta) \|u\|_2 \le \| \L(u) \|_2 \le (1+\d)\|u\|_2, \qquad \forall u\in \mathcal{U},}
    \label{eq:isoDefn}
\end{align}
except with a probability of at most $\exp(-C'\d^2m)$. Here, $C,C'$ are  universal constants and the probability is over the choice of the operator $\L$.}
\end{defn}

For example, {consider} a matrix {populated by independent Gaussian random variables, which} {can} be identified with a random linear operator. \rev{This matrix satisfies the probabilistic RIP} {if \rev{it} is sufficiently flat and properly scaled,} see Appendix~B.4.
Random matrix theory~\cite{vershynin2010introduction} offers  \rev{similar} statements with broad applications in {statistical inference and signal processing}~\cite{candes2008introduction}.

\rev{Note also that the two-sided nature of \eqref{eq:isoDefn} does not restrict the generality of Definition~\ref{def:isometry}. Indeed, consider an operator $\L'$ that satisfies $0< a\|u\|_2\le \|\L'(u)\|_2$ instead of~\eqref{eq:isoDefn}. Then we can verify that~$\L= \sqrt{2/(a^2+\|\L'\|^2)}\L'$ satisfies the probabilistic $\d$-RIP with $\d=\sqrt{(\|\L'\|^2-a^2)/(\|\L'\|^2+a^2)}$, where $\|\L'\|$ is the operator norm of~$\L'$.} 
Finally, let us define the notion of critical angle below. \rev{As we will see shortly, the critical angle quantifies how difficult it is to  construct the correctness certificates prescribed in Lemma~\ref{lem:nonCvxLearn}.} 

\begin{defn}[{\sc Critical angle}]\label{defn:critAngle} For an integer $p$,  alphabet $\A$ and  model $x^\n$,   \rev{we define}
\begin{align}
    \Slice_{x^\n,p}(\A) := \l\{ \slice \in \Slice_p(\A): x^\n/\gauge_{\A,p}(x^\n) \notin \slice \r\}.
    \label{eq:slicesX}
\end{align}
\rev{In words, $\Slice_{x^\n,p}(\A)$ collects all the slices of $\hull(\A)$ that are formed by at most $p$ atoms and do not contain the point $x^\n/\gauge_{\A,p}(x^\n)$, see Definitions~\ref{defn:slice} and~\ref{def:gauge_p}. The  critical angle of the alphabet $\A$ with respect to the model $x^\n$, denoted by $\theta_{x^\n,p}(\A)\in [0,\pi]$,} is then defined as 
\begin{align}
    \theta_{x^\n,p} = \theta_{x^\n,p}(\A) := \sup\l\{  \angle \cone\l(\slice - \frac{x^\n}{\gauge_{\A,p}(x^\n)} \r): \slice \in \Slice_{x^\n,p}(\A) \r\},
    \label{eq:criticalAngle}
\end{align}
with the conventions that $0/0=0$, and $\theta_{x^\n,p}(\A) = 0$ if $\Slice_{x^\n,p}(\A)$ is empty. 
\end{defn}

\rev{To help visualize this new notion, the next result relates the critical angle to a familiar quantity in convex statistical learning: Tangent cones of the set $\hull(\A)$.}


\begin{prop}[{\sc \rev{Example for critical angle}}]\label{prop:criticalSpecial}
Consider an alphabet $\A\subset \R^d$, an integer $p\ge d+1$, and a model $x^\n\in \R^d$.  The critical angle {of the alphabet $\A$ with respect to the model $x^\n$} is bounded by twice the angle of the corresponding tangent cone, i.e., $$\theta_{x^\n,p}(\A)\le 2 \cdot \angle   \cone\l(\A-\frac{x^\n}{\gauge_{\A}(x^\n)}\r).$$ \rev{Here, $\cone(\A-x^\n/\gauge_\A(x^\n))$ is the tangent cone of $\hull(\A)$ at the point~$x^\n/\gauge_\A(x^\n)$~\cite{rockafellar2009variational}. The angle of this tangent cone, which appears above, should be calculated according to Definition~\ref{defn:angleConeDefn}.}
\end{prop}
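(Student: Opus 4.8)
The plan is to unwind the definitions so that both sides of the claimed inequality become statements about angles of cones based at (essentially) the same point, and then to use a triangle-type inequality for the angle function. First I would recall that since $p \ge d+1$, Proposition~\ref{prop:propsHullP} gives $\hull_p(\A) = \hull(\A)$, hence $\gauge_{\A,p} = \gauge_\A$ and the point $x^\n/\gauge_{\A,p}(x^\n) = x^\n/\gauge_{\A}(x^\n)$; call this point $\bar x$. Also, for $p \ge d+1$ every slice $\slice \in \Slice_p(\A)$ is contained in $\hull(\A) = \hull_p(\A)$, so the supremum in \eqref{eq:criticalAngle} defining $\theta_{x^\n,p}(\A)$ is a supremum over cones $\cone(\slice - \bar x)$ with $\slice \subseteq \hull(\A)$ and $\bar x \notin \slice$. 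Since all such cones are contained in $\cone(\hull(\A) - \bar x) = \cone(\A - \bar x)$ (the tangent cone at $\bar x$), monotonicity of the angle $\angle(\cdot)$ under inclusion of cones would immediately give $\theta_{x^\n,p}(\A) \le \angle \cone(\A - \bar x)$ — but that is only \emph{half} the claimed bound, so a more careful argument is needed to see where the factor $2$ is genuinely required (it is not, in fact, with this naive bound; but the factor $2$ version is what is stated, presumably because the intended argument routes through a different, more robust estimate).

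The key geometric lemma I would isolate and prove is a \emph{sub-additivity / triangle inequality for cone angles}: if $\K_1, \K_2$ are closed cones with $\K_1 \subseteq \K_2$, then $\angle \K_1 \le \angle \K_2$ (monotonicity); and more to the point, for any closed cone $\K$ and any unit vector $u_0 \in \K$, $\angle \K \le 2 \cdot \sup_{u \in \K \cap \S^{d-1}} \angle(\mathbb{R}_+ u_0, \mathbb{R}_+ u)$, i.e. the angle of a cone is at most twice the largest angle any ray makes with a fixed reference ray. This follows from the definition \eqref{eq:coneAngleDefn}: for $u, u' \in \K \cap \S^{d-1}$, the angle between $u$ and $u'$ is at most the angle between $u$ and $u_0$ plus the angle between $u_0$ and $u'$ (ordinary triangle inequality for the geodesic metric on the sphere), hence at most $2\sup_w \angle(\mathbb{R}_+u_0, \mathbb{R}_+ w)$; taking the appropriate $\min$ over $u'$ and $\max$ over $u$ in \eqref{eq:coneAngleDefn} and picking $u_0$ to witness (near-optimally) the inner minimization preserves the bound.

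Applying this with $\K = \cone(\A - \bar x)$ the tangent cone and $u_0$ a suitable reference direction, I would then observe that for each slice $\slice \in \Slice_{x^\n,p}(\A)$, the cone $\cone(\slice - \bar x)$ is a subcone of $\cone(\A - \bar x)$, so $\angle \cone(\slice - \bar x) \le \angle \cone(\A - \bar x)$ by monotonicity; but to land the stated factor $2$ I would instead bound $\angle \cone(\slice - \bar x)$ directly via the triangle inequality: every unit direction in $\cone(\slice - \bar x)$ lies in $\cone(\A - \bar x)$, and two such directions each make an angle at most $\angle \cone(\A - \bar x)$ with any fixed near-optimal reference ray of $\cone(\A-\bar x)$, so they make an angle at most $2\angle\cone(\A-\bar x)$ with each other. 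Taking the supremum over $\slice \in \Slice_{x^\n,p}(\A)$ gives $\theta_{x^\n,p}(\A) \le 2\angle\cone(\A - \bar x)$, which is exactly the claim. The main obstacle, and the place that needs care, is the interaction between the $\max$-$\min$ structure in the definition \eqref{eq:coneAngleDefn} of $\angle(\cdot)$ and the triangle inequality: one must be sure that the reference ray $u_0$ chosen to exploit the triangle inequality is one for which the inner $\min$ in \eqref{eq:coneAngleDefn} is (nearly) attained, so that $\langle u_0, \cdot\rangle$ is genuinely bounded below by $\cos(\angle\cone(\A-\bar x))$ across the whole cone; handling whether this $\min$/$\sup$ is attained (compactness of $\K \cap \S^{d-1}$ when $\K$ is closed, modulo the origin issue when $\bar x$ is itself an extreme-point-like degenerate case) is the one genuinely delicate point, and everything else is bookkeeping with the definitions.
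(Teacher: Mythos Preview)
Your triangle-inequality argument in the last two paragraphs is correct and is essentially the paper's proof. The paper packages it as a standalone lemma: for closed cones $\K_1\subset\K_2$ one has $\angle\K_1\le 2\angle\K_2$, proved via auxiliary angles $\phi(\K)$ (spherical circumradius of $\K\cap\S^{d-1}$) and $\psi(\K)$ (spherical diameter) through the chain $\angle\K_1\le\psi(\K_1)\le\psi(\K_2)\le 2\phi(\K_2)\le 2\angle\K_2$. Your ``near-optimal reference ray $u_0$'' is precisely the circumcenter witnessing $\phi(\K_2)\le\angle\K_2$, and the spherical triangle inequality gives $\psi(\K_1)\le 2\phi(\K_2)$.

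However, your first paragraph contains a genuine misconception: plain monotonicity $\angle\K_1\le\angle\K_2$ under inclusion does \emph{not} hold, so your claim that the factor $2$ ``is not, in fact, [needed]'' is unjustified. The $\max$--$\min$ structure in \eqref{eq:coneAngleDefn} means enlarging a cone can \emph{decrease} its angle, because the outer $\max$ acquires new, better-centered candidates for $u$. The paper records the sharp example $\K_1=\{te_1:t\ge0\}\cup\{te_2:t\ge0\}\subset\K_2=\K_1\cup\{t(e_1{+}e_2):t\ge0\}$, for which $\angle\K_1=\pi/2>\pi/4=\angle\K_2$. Even in the convex-cone situation relevant here (both $\cone(\slice-\bar x)$ and the tangent cone are convex), the paper only \emph{conjectures} monotonicity and does not prove it. So the factor-$2$ argument you outline is not a detour to a weaker bound but the actual content of the proof.
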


As we will see shortly, the smaller the critical angle is, the easier it is to construct the correctness certificates prescribed  in Lemma~\ref{lem:nonCvxLearn}. 
Equipped with Definitions~\ref{defn:angleConeDefn}-\ref{defn:critAngle}, we now  present the last main result of this section. Informally speaking, \rev{the result below states that the new learning machine~\eqref{eq:mainNonCvx} succeeds when the linear operator~$\L:\R^d\rightarrow\R^m$ in~\eqref{eq:fullIneaxctModel} is generic  and $m$ is large enough. For simplicity, the result below is limited to the exact setup, i.e., below we set $\epsilon=0$ in~\eqref{eq:fullIneaxctModel}.}

\begin{thm}[{\sc Exact recovery}]\label{thm:constructOpt}
Consider the model $x^\n$ in~\eqref{eq:fullIneaxctModel} with~$\epsilon=0$, and assume that the alphabet $\A$ satisfies Assumptions~\ref{a:alph}\ref{assumption:containsOrigin} and~\ref{assumption:bounded}. \rev{Assume also that the corresponding critical angle in Definition~\ref{defn:critAngle} satisfies $\theta_{x^\n,p}(\A)<\frac{\pi}{2}$.}
Let us equip $\Slice_{p}(\A)$ in~\eqref{eq:far-right} with the pseudo-metric\footnote{For a pseudo-metric,  $\dist(x,x')=0$ does not imply that $x=x'$.} that assigns the distance
    \begin{align}
        \dist_p(\slice,\slice') := 
        \dist_H\l( \cone\l(\slice - \frac{x^\n}{\gauge_{\A,p}(x^\n)} \r) \cap \mathbb{S}^{d-1},  \cone\l(\slice' - \frac{x^\n}{\gauge_{\A,p}(x^\n)} \r) \cap \mathbb{S}^{d-1} \r),
        \label{eq:metricThm}
    \end{align}
to every pair of slices $\slice,\slice'\in \Slice_{p}(\A)$, with the convention that $0/0=0$.  \rev{Above, $\mathbb{S}^{d-1}$ is the unit sphere and $\dist_\H$ denotes the (Euclidean) Hausdorff distance between two sets~\cite{rockafellar2009variational}. Lastly, for $\d\in [0,1)$, suppose that the random linear operator $\L:\R^d\rightarrow\R^m$ satisfies the probabilistic $\d$-RIP, see Definition~\ref{def:isometry}. If $\gauge_{\A,p}(x^\n)=0$, then the machine~\eqref{eq:mainNonCvx} correctly returns~$0$. That is, $x^\n=\sum_{i=1}^p \wh{c}_i\wh{A}_i = 0$, where $\{\wh{c}_i,\wh{A}_i\}_{i=1}^p$ is a solution of the optimization problem~\eqref{eq:mainNonCvx}. Otherwise, suppose that $p\ge r$, where $r$ is the sparsity level of $x^\n$ in~\eqref{eq:fullIneaxctModel}. Suppose also that
\begin{align}
 m \ge \rev{ \frac{1}{\l[\cos(\theta_{x^\n,p})\r]^{2}} \l(Cp+C'\entropy\l(\Slice_p(\A),\dist_p,\frac{\cos(\theta_{x^\n,p})}{2\max(\|\L\|_\op^2,1)}\r) \r)}.
    \label{eq:ddprimeSmall}
\end{align}
Then the machine~\eqref{eq:mainNonCvx} returns $x^\n$ and a $p$-sparse decomposition of $x^\n$ in $\A$, except with a probability of at most $\exp(-[\cos(\theta_{x^\n,p}) ]^2 m/C')$. That is, with high probability, it holds that $x^\n=\sum_{i=1}^p \wh{c}_i \wh{A}_i$, where $\{\wh{c}_i,\wh{A}_i\}_{i=1}^p$ is a solution of the optimization problem~\eqref{eq:mainCvx}. Here,  $C,C'$ are  universal constants and $\|\L\|_{\mathrm{op}}$ is the operator norm of~$\L$}.
\end{thm}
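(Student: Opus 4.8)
The plan is to build the correctness certificates prescribed in Lemma~\ref{lem:nonCvxLearn} and show they exist with high probability under the stated bound on $m$. The case $\gauge_{\A,p}(x^\n)=0$ (equivalently $x^\n=0$) is immediate from Lemma~\ref{lem:nonCvxLearn}, so assume $x^\n\neq 0$ and write $\bar x := x^\n/\gauge_{\A,p}(x^\n)$. By Lemma~\ref{lem:nonCvxLearn} it suffices to produce, for every slice $\slice\in\Slice_p(\A)$: (i) when $\bar x\in\slice$, the restriction of $\L$ to $\slice$ is injective; and (ii) when $\bar x\notin\slice$, a certificate $Q_\slice\in\range(\L^*)$ with $\langle Q_\slice, x-\bar x\rangle<0$ for all $x\in\slice$. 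Requirement (i) follows directly from the probabilistic $\d$-RIP: each such slice spans a subspace of dimension at most $p+1$, which is $\le Cm\d^2$ by~\eqref{eq:ddprimeSmall}, so $\L$ restricted to it is a near-isometry (hence injective) off an exponentially small event. The substance of the proof is requirement (ii).

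First I would handle a single slice $\slice\in\Slice_{x^\n,p}(\A)$. The inequality $\langle Q_\slice, x-\bar x\rangle<0$ for all $x\in\slice$ is equivalent to asking that $Q_\slice$ be a strictly separating direction between $\bar x$ and $\slice$, i.e. that $-Q_\slice$ lie strictly inside the polar of $\cone(\slice-\bar x)$. Because $\theta_{x^\n,p}(\A)<\pi/2$, the dual cone of $\cone(\slice-\bar x)$ has nonempty interior, and more quantitatively it contains a spherical cap of angular radius bounded below in terms of $\cos(\theta_{x^\n,p})$. The natural candidate is a fixed "anchor" direction — e.g. the negative of the Chebyshev center of $\cone(\slice-\bar x)\cap\S^{d-1}$, which by Definition~\ref{defn:angleConeDefn} makes angle at least $\theta_{x^\n,p}$ with every point of the slice cone. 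One then defines the pre-certificate by projecting this anchor onto $\range(\L^*)$ and argues, using the $\d$-RIP applied to the low-dimensional subspace $\lin(\slice)+\range(\text{anchor})$, that the projection does not move the direction by more than an angle controlled by $\d$; choosing $\d$ small (absorbed into the constants and the $\cos(\theta_{x^\n,p})$ factors) keeps the separation strict. This yields a valid $Q_\slice$ for each fixed $\slice$ off a failure event of probability $\exp(-C'\d^2 m)$.

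The main obstacle — and the reason the statement carries a metric-entropy term — is that $\Slice_p(\A)$ is typically infinite, so a union bound over all slices is impossible and one cannot simply intersect all the per-slice events. The remedy, and the step I expect to be most delicate, is a covering/chaining argument in the pseudo-metric $\dist_p$ of~\eqref{eq:metricThm}: take a $\delta_0$-net $\cN$ of $(\Slice_p(\A),\dist_p)$ with $\delta_0 = \cos(\theta_{x^\n,p})/(2\max(\|\L\|_\op^2,1))$, construct certificates only for the finitely many net elements (so the union bound costs the $\entropy(\Slice_p(\A),\dist_p,\delta_0)$ term, which is why it appears additively inside~\eqref{eq:ddprimeSmall}), and then show that a certificate $Q_{\slice_0}$ for a net point $\slice_0$ near $\slice$ still certifies $\slice$ itself. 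The last transfer step uses that $\dist_p$ measures Hausdorff distance between the spherical sections of the shifted cones, so $\slice$ is contained in an angular neighbourhood of $\slice_0$ of size $\delta_0$; the factor $\max(\|\L\|_\op^2,1)$ and the choice of $\delta_0$ are exactly tuned so that the slack $\cos(\theta_{x^\n,p})$ in the separation $\langle Q_{\slice_0}, x-\bar x\rangle \le -c\cos(\theta_{x^\n,p})$ survives the perturbation. Collecting the $\d$-RIP failure events for the injectivity slices and the net, and optimizing the constants, gives the final failure probability $\exp(-[\cos(\theta_{x^\n,p})]^2 m/C')$, and Lemma~\ref{lem:nonCvxLearn} then delivers exact recovery. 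A final remark worth making in the write-up is that this is precisely the "family of certificates that jointly certify the outcome" alluded to in the introduction — no single $Q$ works, but the net of certificates does.
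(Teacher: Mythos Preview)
Your high-level architecture matches the paper exactly: reduce to Lemma~\ref{lem:nonCvxLearn}, take a $\delta_0$-net of $\Slice_p(\A)$ in the $\dist_p$ metric, build a certificate on each net slice, and transfer to arbitrary slices via the Hausdorff bound. Two concrete steps, however, do not go through as you have written them.

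First, requirement (i) does \emph{not} follow ``directly'' from the probabilistic RIP. There can be uncountably many slices $\slice\in\Slice_p(\A)$ with $\bar x\in\slice$, so a per-slice RIP event followed by a union bound is unavailable. The paper handles cases (i) and (ii) through the \emph{same} net: for an arbitrary $\slice\ni\bar x$, pick a net slice $\slice'$ with $\dist_p(\slice,\slice')\le\delta_0$; for any unit vector $u_x\in\cone(\slice-\bar x)$ find $u'\in\cone(\slice'-\bar x)\cap\S^{d-1}$ with $\|u_x-u'\|_2\le\delta_0$, and then $\|\L u_x\|_2\ge\|\L u'\|_2-\|\L\|_\op\,\delta_0\ge 1-\delta'-\|\L\|_\op\,\delta_0>0$. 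This gives injectivity on $\cone(\slice-\bar x)$ and hence on $\lin(\slice)$, uniformly over all such $\slice$ at no extra probabilistic cost.

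Second, ``projecting the anchor onto $\range(\L^*)$'' is the wrong construction, and the RIP on $\lin(\slice)+\mathrm{span}(\text{anchor})$ does not control that projection: for a generic $\L$ with $m\ll d$, orthogonal projection onto $\range(\L^*)$ shrinks a fixed vector by a factor of order $\sqrt{m/d}$ and does not approximately preserve inner products against vectors in a low-dimensional subspace. The paper instead sets $Q_{\slice'}:=\L^*\L(\bar x-x_{\slice'})$, where $x_{\slice'}\in\slice'$ realises the cone angle in Definition~\ref{defn:angleConeDefn} (your ``Chebyshev center'' is the right object, up to sign). The point of this choice is that $\langle Q_{\slice'},u_{x'}\rangle=-\langle \L u_{x_{\slice'}},\L u_{x'}\rangle$, and the parallelogram identity together with RIP on the $(p{+}1)$-dimensional subspace $\lin(\slice'-\bar x)$ gives $\langle Q_{\slice'},u_{x'}\rangle/\|x_{\slice'}-\bar x\|_2\le -\cos(\angle\cone(\slice'-\bar x))+\delta'$. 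With this corrected certificate, your transfer step from $\slice'$ to a $\delta_0$-close $\slice$ then works exactly as you sketched, picking up the additive $\|\L\|_\op^2\,\delta_0$ error that the choice of $\delta_0$ absorbs.
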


The proof technique of Theorem~\ref{thm:constructOpt} appears to be new in this context. More specifically, the proof relies on a  covering argument, where we form a fine {net} for all relevant slices of~$\hull(\A)$, with respect to the metric~$\mathrm{dist}_p$ in~\eqref{eq:metricThm}. We then explicitly construct a correctness certificate for each slice, with high probability over the choice of the random operator~$\L$. The failure probabilities are added up via a union bound. {To complete the proof,} we finally show that {the constructed} certificates  qualify as optimality certificates for \emph{all} relevant slices of~$\hull(\A)$, even those not present in the {net}. 

As {discussed} below, we may  consider Theorem~\ref{thm:constructOpt} as  a statistical guarantee for the new machine~\eqref{eq:mainNonCvx} that {is} analogous {to a similar} guarantee for the convex machine~\eqref{eq:mainCvx}. 
 \rev{ The regime of interest in Theorem~\ref{thm:constructOpt} is
\begin{align}
      & p \lesssim r, \quad \text{and} \quad
      0 \le \theta_{x^\n,p} < \frac{\pi}{2},\nonumber\\
      & \text{and}\quad
      \entropy\l(\Slice_p(\A),\dist_p,\frac{\cos(\theta_{x^\n,p})}{2\max(\|\L\|_\op^2,1)}\r) \lesssim r,
      \label{eq:regimeNew}
 \end{align}
in which, for brevity, the symbol $\lesssim$ suppresses any factors that might depend on the alphabet~$\A$. An example of the regime~\eqref{eq:regimeNew} is given in Section~\ref{sec:manifolModels} with $r=p=1$. In that example, the factors suppressed in~\eqref{eq:regimeNew} are given explicitly and their dependence on $\A$ is through elementary geometric attributes, e.g., intrinsic dimension and volume.}

\rev{In the regime~\eqref{eq:regimeNew}, Theorem~\ref{thm:constructOpt}  predicts that the machine~\eqref{eq:mainNonCvx} successfully recovers the {true} model~$x^\n$ and {returns a} $p$-sparse decomposition of~$x^\n$ in the alphabet $\A$, provided that $m \gtrsim r$. That is, in the regime~\eqref{eq:regimeNew}, solving the optimization problem~\eqref{eq:mainNonCvx} with high probability recovers~$x^\n$ and finds a $p$-sparse decomposition of~$x^\n$ if $m\gtrsim r$.   Recall that~$m$ is the number of observations and~$r$ is the sparsity level of~$x^\n$ within the alphabet~$\A$. In contrast, it is not difficult to verify that $m\ge \dim(\face^\n)$ is necessary (rather than sufficient) for the classical convex machine~\eqref{eq:mainCvx} to recover the model~$x^\n$. Here, $\face^\n$ is an exposed face of $\hull(\A)\subset \R^d$ that passes through $x^\n/\gauge_\A(x^\n)$, see Definition~\ref{defn:face}. In particular, in the negative toy examples of Section~\ref{sec:old}, one can verify that $\dim(\face^\n)= d$ which, in turn, necessitates $m\ge d$.

The informal discussion above highlights the potential benefits of the new machine in the regime~\eqref{eq:regimeNew}. Beyond this discussion, the entropy number and the critical angle in Theorem~\ref{thm:constructOpt} should be calculated on a case-by-case basis by taking into account the geometry of the learning alphabet $\A$. One such case is presented in the next section.

Finally, in the regime $p\ge d+1$,  the machine~\eqref{eq:mainNonCvx} reduces to the convex machine~\eqref{eq:mainCvx}, see Remark~\ref{rem:cvxVsNoncvx}. In particular, our bound in~\eqref{eq:ddprimeSmall} is too conservative in this regime and reads as~$m\gtrsim d$. Instead, with high probability, a classical result guarantees that the machine~\eqref{eq:mainCvx} recovers~$x^\n$ if $m \ge w(\Omega)^2+1$, e.g., see~\cite[Corollary~3.3.1]{chandrasekaran2012convex}. Here,~$w(\Omega)$ is the Gaussian width of the set~$\Omega$. In turn,~$\Omega$ is the intersection of the corresponding tangent cone of~$\A$ with the unit sphere, i.e., $\Omega:= \cone(\A - x^\n/\gauge_\A(x^\n)) \cap \mathbb{S}^{d-1}$. For completeness, this result is reviewed in Appendix B.8. Through the celebrated Dudley's inequality, the Gaussian width of the above set $\Omega$ relates to its entropy number with respect to the Euclidean metric~\cite{chandrasekaran2012convex}. This might be contrasted with~\eqref{eq:ddprimeSmall} which involves the entropy number of the set $\Slice_p(\A)$ with respect to the metric $\dist_p$.}

\section{Stylized Applications of  the \texorpdfstring{Gauge$_p$}{Gp} Function Theory}\label{sec:action}
In Section~\ref{sec:newMachine}, we {studied} the new machine~\eqref{eq:mainNonCvx}. Without being exhaustive, this section {applies} the new learning machine to two representative  problems {to showcase its} potential. 

\subsection{Manifold-Like Models}\label{sec:manifolModels}
Despite the {importance of manifold models} in signal processing and machine learning~\cite{eftekhari2015new,peyre2009manifold,iwen2018recovery}, the classical gauge function theory \rev{might} fail to learn manifold models, {as highlighted in Section~\ref{sec:old} with a toy  example.} 
{In this section, we consider a slightly more general family of models and show that the new machine~\eqref{eq:mainNonCvx} succeeds in learning them} \rev{from limited observations.}

Suppose that the alphabet~$\A$ is an arbitrary subset of $\R^d$, and consider the $1$-sparse {setup}
\begin{align}
    y := \L(A^\n) \rev{\in \R^m}, \qquad A^\n  \in \A,\label{eq:manifoldModelApp}
    \tag{manifold-like}
\end{align}
where $\L:\R^d\rightarrow \R^m$ is a linear operator. For simplicity, we have not accounted for  measurement noise {in the {setup} above}. \rev{The equation~\eqref{eq:manifoldModelApp} is common in learning with nonlinear constraints, e.g., when using a generative adversarial network as the prior~\cite{gomez2019fast,bora2017compressed}. In particular, when~$\A$ is an embedded submanifold of~$\R^d$, then~\eqref{eq:manifoldModelApp} reduces to the well-known manifold model~\cite{eftekhari2015new}.

Since the alphabet~$\A$ in~\eqref{eq:manifoldModelApp} can be arbitrary, the $1$-sparsity of the~\eqref{eq:manifoldModelApp} {setup} does not reduce its generality. Indeed, consider another alphabet~$\A'\subset \R^d$. For an integer $r$, let us set $\A := \hull_r(\A')$, according to Definition~\ref{defn:hall}. Then, every $1$-sparse model in the alphabet~$\A$  corresponds to an $r$-sparse model of the alphabet~$\A'$. On the other hand, as we will see shortly, the $1$-sparsity of~\eqref{eq:manifoldModelApp} will simplify the presentation of the main result in this section.}

To recover the atom~$A^\n$ in~\eqref{eq:manifoldModelApp}, we may {implement} the machine~\eqref{eq:mainNonCvx} for any $p\ge 1$. In particular, \rev{the choice of $p=1$ leads us to consider the learning machine}
\begin{align}
    \min_{c,A} \,\, \| c \L(A) - y\|_2^2\,\, \text{subject to} \,\, 0\le c\le 1 \text{ and } A\in \A,
    \tag{gauge$_1$ : manifold-like}
    \label{eq:specialCaseManifold}
\end{align}
which is closely related to those numerically {studied} in~\cite[Equation 12]{eftekhari2015new} and~\cite[Equation 20]{peyre2009manifold}. 

\rev{In the remainder of this section, we will limit ourselves to the special of case of~\eqref{eq:manifoldModelApp} in which the alphabet~$\A$ is a compact embedded  submanifold of~$\R^d$~\cite{lee2018introduction}. For example, the set~$\{x: h(x)=0\}$ is a $k$-dimensional embedded submanifold of $\R^d$ if the Jacobian of $h:\R^d\rightarrow\R^k$ is full-rank everywhere~\cite{absil2009optimization}. This restriction  does not considerably reduce the generality of our results below. Indeed, if a bounded alphabet $\A'\subset \R^d$ is not an embedded submanifold, one can always replace the alphabet $\A'$ with a new alphabet $\A\subset \R^d$ such that $\A$ is a compact embedded submanifold and $\A'\subset \A$. 
For example, a (potentially conservative) choice for $\A$ is a sufficiently large (closed) Euclidean ball that contains~$\A'$.

The main result of this section is a  corollary of Theorem~\ref{thm:constructOpt} for the special case of~$r=p=1$, presented below. This corollary predicts that the machine~\eqref{eq:specialCaseManifold} successfully recovers the true atom~$A^\n$ from {the vector of observations}~$y=\L(A^\n)\in \R^m$, provided that~$\L$ is a generic linear operator and $m$ is sufficiently large. In the corollary below, for tidiness, we assume that $A^\n$ has unit norm. {We make this assumption} without any loss of generality {because} {it can always be enforced by scaling} the alphabet~$\A$. Again for tidiness, we also introduce a new parameter: In the corollary below, instead of directly using the critical angle~$\theta_{A^\n,1}(\A)$ in Definition~\ref{defn:critAngle}, we will make  use of another angle, which is defined as
\begin{align}
\theta'_{A^\n,1}(\A) := \inf \l\{ \angle [A-A^\n,A^\n]: {A\in \A - \{A^\n\}} \r\} {\in [0,\pi]}. 
\label{eq:gammaDefn}
\end{align}
{As shown in the proof of the corollary, the two angles $\theta_{A^\n,1}(\A)$ and $\theta_{A^\n,1}'(\A)$ are} closely related:
\begin{equation}
\theta_{A^\n,1}(\A) \le \frac{\pi}{2} - \frac{\theta'_{A^\n,1}(\A)}{2}.
\end{equation}
{In the corollary below, we will also make use of reach of $\A$, a geometric attribute of the manifold~$\A$ that is reviewed below~\cite{federer1959curvature}. This elementary property, rooted in geometric measure theory, has become somewhat popular in the analysis of manifold models for signal processing~\cite{baraniuk2009random,davenport2007smashed,davenport2010joint}.}

\begin{defn}[{\sc Reach}]\label{defn:reach} 
Suppose that $\A$ is a compact embedded submanifold of $\R^d$. The reach of~$\A$, denoted by $\reach(\A)$, is the largest number $r$ that satisfies the following: The open normal bundle of $\A$ of radius $r$ is embedded in $\R^d$ for all $r<\reach(\A)$. Recall that the normal bundle of radius $r$ is the set of all normal vectors to the manifold $\A$ of length at most $r$~\cite{lee2018introduction}.  
\end{defn}

It is not difficult to verify that the reach of a circle is its radius, see Figure~\ref{fig:coneAngle}. As another example, consider the so-called moment curve $t\rightarrow [1,\cdots,e^{\mathrm{i}2\pi(d-1)t}]$, which can be embedded in $\R^{2d}$. The reach of the moment curve is known to be proportional to $\sqrt{d}$, see~\cite[Section 2.2.2]{eftekhari2015new}. Let us now state the main result of this section.

\begin{corr}[{\sc Manifold-like models}]\label{cor:manifold}
Suppose that Assumptions~\ref{a:alph}\ref{assumption:containsOrigin} and~\ref{assumption:bounded} on the alphabet $\A$ are met. Consider the {setup}~\eqref{eq:manifoldModelApp} and assume without loss of generality that $\|A^\n\|_2=1$. For an integer $k$, suppose also that $\A$ is a compact $k$-dimensional  embedded submanifold of $\R^d$. Let $\vol_k(\A)$ and $\reach(\A)>0$ denote the $k$-dimensional volume of $\A$ and its reach, respectively. We also make the mild technical assumption that $\vol_k(\A)\cdot \reach(\A)^k \ge (10.5/\sqrt{k})^k$. Lastly, assume that $\theta'_{A^\n,1}(\A)>0$, see~\eqref{eq:gammaDefn}. For $\d\in[0,1)$, suppose that the linear operator~$\L:\R^d\rightarrow\R^m$ in~\eqref{eq:manifoldModelApp} satisfies the probabilistic $\d$-RIP, see Definition~\ref{def:isometry}. Then the machine~\eqref{eq:specialCaseManifold} returns~$A^\n$ if
    \begin{align}
        m \ge 
        \frac{Ck
        }{\l[\sin(\theta_{A^\n,1}'(\A))\r]^2}
        \log\l[\frac{\max(\|\L\|_\op^2,1)}{\sin
        (\theta_{A^\n,1}'(\A)) } \l(\vol_k(\A)\r)^{\frac{1}{k}}\reach(\A) \r],
        \label{eq:manifoldFinal}
    \end{align}
except with a probability of at most $\exp\big(-C' [\sin(\theta_{A^\n,1}')]^2 m\big)$ where $C,C'$ are universal constants. That is, with high probability, the pair $(1,A^\n)$ is the unique solution of the optimization problem~\eqref{eq:specialCaseManifold}, provided that $m$ is sufficiently large.
\end{corr}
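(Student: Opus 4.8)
The plan is to derive Corollary~\ref{cor:manifold} as a direct instantiation of Theorem~\ref{thm:constructOpt} with $r=p=1$, so the bulk of the work consists of translating the abstract quantities in~\eqref{eq:ddprimeSmall}---the critical angle $\theta_{A^\n,1}(\A)$ and the entropy number $\entropy(\Slice_1(\A),\dist_1,\cdot)$---into the explicit geometric attributes appearing in~\eqref{eq:manifoldFinal}, namely $\sin(\theta'_{A^\n,1}(\A))$, the intrinsic dimension $k$, the volume $\vol_k(\A)$, and $\reach(\A)$. First I would record that, since $p=1$, a slice in $\Slice_1(\A)$ is simply a line segment $\hull(\{A,0\})=[0,A]$ for some $A\in\A$, and since $\|A^\n\|_2=1$ we have $\gauge_{\A,1}(A^\n)=1$ and $x^\n/\gauge_{\A,1}(x^\n)=A^\n$. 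Thus $\Slice_{A^\n,1}(\A)$ is indexed by $\{A\in\A:A\ne A^\n\}$, and $\cone(\slice-A^\n)=\cone([0,A]-A^\n)=\cone(\{-A^\n,\,A-A^\n\})$, a two-dimensional (or degenerate) cone. Bounding $\angle\cone(\{-A^\n,A-A^\n\})$ for each such $A$ and taking the supremum over $A$ reduces, by elementary planar trigonometry, to the angle $\angle[A-A^\n,A^\n]$, which is exactly how $\theta'_{A^\n,1}(\A)$ in~\eqref{eq:gammaDefn} enters; carrying the constant through gives $\theta_{A^\n,1}(\A)\le \pi/2-\theta'_{A^\n,1}(\A)/2$, hence $\cos(\theta_{A^\n,1})\ge \sin(\theta'_{A^\n,1}(\A)/2)\gtrsim \sin(\theta'_{A^\n,1}(\A))$, which accounts for the $[\sin(\theta'_{A^\n,1})]^{-2}$ prefactor and the failure-probability exponent.

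Next I would estimate the entropy number $\entropy(\Slice_1(\A),\dist_1,\eta)$ with $\eta=\cos(\theta_{A^\n,1})/(2\max(\|\L\|_\op^2,1))$. The pseudo-metric $\dist_1(\slice,\slice')$ in~\eqref{eq:metricThm} is the Hausdorff distance between the spherical sections of $\cone([0,A]-A^\n)$ and $\cone([0,A']-A^\n)$; I would show this is Lipschitz-controlled by $\|A-A'\|_2$ (with a Lipschitz constant depending on how far $A,A'$ are from $A^\n$, ultimately absorbed into $\theta'_{A^\n,1}$), so that a $\delta$-net of $\A$ in the Euclidean metric induces an $O(\delta)$-net of $\Slice_1(\A)$ in $\dist_1$. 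It then remains to invoke a standard covering bound for a compact $k$-dimensional embedded submanifold in terms of its volume and reach---something of the form $\log\mathcal N(\A,\|\cdot\|_2,\delta)\lesssim k\log(\vol_k(\A)^{1/k}/(\delta\,\reach(\A))^{?})$, available in the manifold-sampling literature and in~\cite{eftekhari2015new}; the mild hypothesis $\vol_k(\A)\reach(\A)^k\ge(10.5/\sqrt k)^k$ is exactly the regularity condition under which such a bound is clean. Substituting $\delta\asymp\eta$ yields $\entropy(\Slice_1(\A),\dist_1,\eta)\lesssim k\log[\max(\|\L\|_\op^2,1)\,\vol_k(\A)^{1/k}\reach(\A)/\sin(\theta'_{A^\n,1})]$, which is precisely the logarithmic factor in~\eqref{eq:manifoldFinal}. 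With $p=1$ the $Cp$ term in~\eqref{eq:ddprimeSmall} is absorbed, and dividing by $[\cos(\theta_{A^\n,1})]^2\gtrsim[\sin(\theta'_{A^\n,1})]^2$ reproduces~\eqref{eq:manifoldFinal}; Theorem~\ref{thm:constructOpt} then certifies that $(1,A^\n)$ solves~\eqref{eq:specialCaseManifold}, and uniqueness follows because case~(i) of Lemma~\ref{lem:nonCvxLearn} forces $\L$ to be injective on the segment through $A^\n$ while case~(ii)/the strict separation rules out every other slice.

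The main obstacle I anticipate is the covering/entropy step: converting the abstract entropy of $\Slice_1(\A)$ under the cone-Hausdorff pseudo-metric $\dist_1$ into a clean bound in $k$, $\vol_k(\A)$, and $\reach(\A)$ requires (a) showing the map $A\mapsto \cone([0,A]-A^\n)\cap\mathbb S^{d-1}$ is Hausdorff-Lipschitz with an explicit constant that does not blow up---this is where $\theta'_{A^\n,1}(\A)>0$ is essential, since near $A=A^\n$ the cone degenerates---and (b) importing or re-deriving a volume/reach-based covering number for embedded submanifolds with the correct constants, so that the somewhat arbitrary-looking threshold $(10.5/\sqrt k)^k$ comes out naturally. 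Everything else is bookkeeping: reducing $p=1$ slices to segments, the planar-trigonometry bound relating $\theta_{A^\n,1}$ to $\theta'_{A^\n,1}$, and the final substitution into Theorem~\ref{thm:constructOpt}.
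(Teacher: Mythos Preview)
Your proposal is correct and follows essentially the same route as the paper: specialize Theorem~\ref{thm:constructOpt} to $r=p=1$, identify slices with atoms, compute $\gauge_{\A,1}(A^\n)=1$, bound the critical angle by $\pi/2-\theta'_{A^\n,1}(\A)/2$ (the paper actually gets equality here), estimate the entropy of $\Slice_1(\A)$ via a manifold covering bound, and substitute.

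The one place where the paper proceeds slightly differently is the entropy step, and it is worth knowing because it neutralizes the obstacle you flagged. Rather than showing that $A\mapsto \cone([0,A]-A^\n)\cap\mathbb{S}^{d-1}$ is Lipschitz in $\|A-A'\|_2$ (which, as you correctly note, is delicate near $A=A^\n$), the paper observes that this cone-sphere intersection is an \emph{arc} with one endpoint fixed at $-A^\n$ and the other at $(A-A^\n)/\|A-A^\n\|_2$. Hence the Hausdorff distance between two such arcs is bounded by the distance between their free endpoints, giving
\[
\dist_1(\slice,\slice')\le\left\|\frac{A-A^\n}{\|A-A^\n\|_2}-\frac{A'-A^\n}{\|A'-A^\n\|_2}\right\|_2.
\]
This reduces the entropy of $\Slice_1(\A)$ under $\dist_1$ to the Euclidean entropy of the secant-direction set $U_{A^\n}(\A)=\{(A-A^\n)/\|A-A^\n\|_2:A\in\A\}$, and Lemma~15 of~\cite{eftekhari2015new} bounds the latter directly in terms of $k$, $\vol_k(\A)$, and $\reach(\A)$ (this is where the threshold $(10.5/\sqrt{k})^k$ originates). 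So no Lipschitz constant ever needs to be controlled near $A^\n$; the normalization is absorbed into the secant set, and the external lemma handles it in one stroke.
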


The proof of Corollary~\ref{cor:manifold} estimates the entropy number on the right-hand side of~\eqref{eq:ddprimeSmall} in Theorem~\ref{thm:constructOpt}. Note that the number $m$ of observations depends logarithmically on the volume and reach of the manifold~$\A$. Moreover, if we ignore the logarithmic term, the number of observations in~\eqref{eq:manifoldFinal} is linear in the dimension $k$ of the manifold~$\A$. Corollary~\ref{cor:manifold}, which specializes Theorem~\ref{thm:constructOpt} to manifolds, is in the same vein as~\cite[Theorem~4]{eftekhari2015new}. To apply Corollary~\ref{cor:manifold}, one only needs to  have access to (estimates of) four geometric attributes of the compact manifold~$\A$, namely, its dimension, volume, reach, and critical angle. Except for perhaps the critical angle, these might be considered widely-studied attributes of a manifold~$\A$. The final remark of this section revisits the toy example in Figure~\ref{fig:manifolds} and computes its critical angle. 

\begin{remark}[{\sc Critical angle}] 
{Consider} an alphabet $\A\subset \R^d$ and an atom~$A^\n\in \A$. Recall from~\eqref{eq:gammaDefn} that, {for tidiness in this section}, {we earlier replaced the critical angle in Definition~\ref{defn:critAngle} with the angle}~$\theta'_{A^\n,1}(\A)$. {This new} angle~$\theta'_{A^\n,1}(\A)$ {evidently} plays a key role in Corollary~\ref{cor:manifold}. More specifically, to recover the atom $A^\n$, Corollary~\ref{cor:manifold} {requires} that~$\theta'_{A^\n,1}(\A) \ne 0$. \rev{That is,} \rev{Corollary~\ref{cor:manifold}} requires that~$\angle [A-A^\n,A^\n] \ne 0$ for every atom~$A\in \A$, see~\eqref{eq:gammaDefn}. 
In Example~\ref{ex:1}, recall that~$\A$ was a~\eqref{eq:spiral} and \rev{the atom} $A^\n$ was specified in~\eqref{eq:exModelManifold}, \rev{represented by} the red dot in Figure~\ref{fig:manifolds}.  For that example, \rev{one can} verify that 
$\theta'_{A^\n,1}(\A)\approx 51^\circ$. In contrast, unless we assume that $\L:\R^d\rightarrow\R^m$ is injective, it is not difficult to {carefully construct an alphabet~$\A$ for which} the convex machine~\eqref{eq:mainCvx} {would fail} to recover the atom~$A^\n$ and~$\angle [A_i-A^\n,A^\n]\le \pi/2$ for some atoms~$\{A_i\}_i\subset \A$.
\end{remark}}

\subsection{Sparse Principal Component Analysis}\label{sec:sparsePCA}
With a negative toy example, we saw in Section~\ref{sec:old} that the classical gauge function theory might fail for the sparse PCA problem. {This happens because the} gauge function might fail to promote sparsity. That is, the minimal decomposition {that} achieves the gauge function value might not {be sparse} in the {corresponding} learning alphabet,  {see~\eqref{eq:spca1}.} In contrast, the generalized  theory, developed in Section~\ref{sec:new}, immediately rectifies this {issue}. {More specifically}, by design, any minimal decomposition that achieves the gauge$_p$ function value {is} always $p$-sparse, see Proposition~\ref{prop:gauge2EquivDefn}\ref{prop:gauge_p:sparse}. \rev{This observation is precisely the improvement offered by the generalized gauge function theory.}

In the remainder of this section, we will show that  the {new}  learning machine {asymptotically approaches} the information-theoretic performance limit of sparse PCA  for the spiked covariance model~\cite{d2005direct,amini2008high,berthet2013optimal,deshpande2014information}. We will also show that the new machine generalizes beyond the spiked covariance model. 
More specifically, for sparsity level $k$ {and dimension $d$}, consider the alphabet
\begin{align}
  \A := \{ uu^\top : \|u\|_2 = 1,\, \|u\|_0 \le k,\,  u\in \R^{d}\} \subset \R^{d\times d},
         \label{eq:spca2A}      
\end{align}
where~$\|u\|_0$ is the number of nonzero entries of the vector~$u$. {Throughout this section,} it is important not to confuse the sparsity level of {the vector} $u$ (number of its nonzero entries) with the sparsity of a statistical model (number of atoms of the alphabet~$\A$ {that are} present in {the} model). For $\theta\in [0,1)$, consider also a  Gaussian random vector of length~$d$, with zero mean and the covariance matrix~$\Sigma \in \R^{d\times d}$. {This} covariance matrix is specified as 
\begin{align}
    &  \Sigma := A^\n + \theta I_{d},  \qquad A^\n = u^\n (u^\n)^\top\in \A,
    \label{eq:defnSigma}
\end{align}
where $I_{d}\in \R^{d\times d}$ is the  identity matrix. Above,~$A^\n$ is the  ``spike'' in the spiked covariance model. Instead of the covariance matrix $\Sigma$, {however,} we have access to the sample covariance matrix 
\begin{align}
    y := \frac{1}{n} \sum_{i=1}^n z_i z_i^\top,
    \label{eq:sampleCovMat}
\end{align}
formed by {the} samples $\{z_i\}_{i=1}^n \subset \R^{d}$, drawn independently from the distribution $\normal(0,\Sigma)$. 

Given the sample covariance matrix~$y$, the objective of sparse PCA is to identify the spike in the covariance matrix $\Sigma$, i.e., our objective is  to identify the atom~$A^\n$ in~\eqref{eq:defnSigma}. \rev{In view of \eqref{eq:fullIneaxctModel},  our inexact {$1$-sparse} {setup} is defined as 
\begin{align}
    y := \mathcal{L}( A^\n) +e,\qquad  A^\n\in \A, \quad \text{where} \quad {\mathcal{L}= \mathrm{id}},\, 
    e := \frac{1}{n} \sum_{i=1}^n z_i z_i^\top - A^\n.
    \tag{spike}\label{eq:spikedModel}
\end{align}
The alphabet $\A$ is specified as in~\eqref{eq:spca2A}. Above, the operator~$\mathrm{id}$ denotes the identity operator. To recover the spike~$A^\n$, we may apply the machine~\eqref{eq:mainNonCvx} for any $p\ge 1$. In particular, the choice of $p=1$ leads us to consider the learning machine}
\begin{align}
    \min_{c,A} \,\|y - cA\|_{\mathrm{F}}^2 \,\,\text{subject to}\,\, 0\le c\le  1\,\, \text{and}\,\, A\in \A.\label{eq:sparsePCAOurs}
\end{align}
Since we are only interested in recovering the atom $A^\n$, and not its {amplitude}~$c^\n$, it suffices to consider the optimization {over} $A$ within~\eqref{eq:sparsePCAOurs}, which reads as
\begin{align}
    \max_{A\in \A}\,\, \langle y, A\rangle  = \max_{u\in \R^{d}} \l\{u^\top y u: \|u\|_2=1,\, \|u\|_0 \le k\r\},
    \qquad \text{(see \eqref{eq:spca2A})}
    \label{eq:matchedFilterSparsePCA}
        \tag{gauge$_1$ : SPCA}
\end{align}
\rev{The above optimization problem is the starting point of the well-known convex relaxation proposed by~\cite{d2005direct}. To see this connection, note that \eqref{eq:matchedFilterSparsePCA} implies that
\begin{align}
     \max_{A\in \A} \,\,\langle y,A\rangle
   & \le \max \l\{ \langle y, A\rangle : \tr(A) = 1 ,\, \|A\|_0\le k^2, \, A\in \psd^{d}_+  \r\},
\end{align}
where $\psd^{d}_+=\{A\in \R^{d\times d}: A=A^\top,\, A\succeq 0\}$ is the cone of positive semi-definite (PSD) matrices, and~$\|A\|_0$ denotes the number of nonzero entries of $A$. We can obtain a convex relaxation of the right-hand side above by replacing $\|A\|_0$ with the $\ell_1$-norm of the matrix $A$, which is $\|A\|_1:=\sum_{i,j} |A_{i,j}|$. By doing so, we obtain the convex relaxation 
\begin{equation}
    \max\l\{ \langle y, A\rangle - \lambda \|A\|_1 : \tr(A) = 1,\, A\in \psd^{d}_+ \r\},
    \label{eq:cvxRelax}
\end{equation}
with $\lambda>0$, which is precisely the  optimization problem studied in~\cite{d2005direct}. As detailed in the result below, under mild assumptions, the machine~\eqref{eq:matchedFilterSparsePCA} provably discovers the spike $A^\n$ in the spiked covariance model. The proof of the result below is  standard in the context of empirical processes and the result itself is in the same vein as \cite[Proposition 1]{amini2008high}.
\begin{prop}[{\sc Spiked covarince model}]\label{prop:infLimit}
    Consider the spiked covariance \rev{setup} in \eqref{eq:spikedModel}. The machine \eqref{eq:matchedFilterSparsePCA} asymptotically returns the spike $A^\n$ in~\eqref{eq:spikedModel}. More specifically, consider a sequence $\{k_l,d_l,n_l\}_l$ such that $\lim_{l\rightarrow \infty} n_l=\infty$. Suppose that 
    \begin{equation}
        \lim_{l\rightarrow\infty} \frac{k_l\log d_l}{n_l}=0.
    \end{equation}
    Then, in the limit of $l\rightarrow\infty$, solving the optimization problem
    $
        \max_{u\in \R^{d}} \l\{u^\top y u: \|u\|_2=1,\, \|u\|_0 \le k\r\}
    $
     returns a vector $u^\n\in \R^{d}$ such that $u^\n (u^\n)^\top = A^\n$, with a probability that approaches one. 
\end{prop}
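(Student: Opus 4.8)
The plan is to show that the maximizer of $\langle y, A\rangle$ over the alphabet $\A$ in~\eqref{eq:spca2A} coincides, asymptotically, with the spike $A^\n$, by controlling the fluctuations of the empirical quadratic form $u^\top y u$ uniformly over all $k$-sparse unit vectors. First I would rewrite the objective: since $y = A^\n + \theta I_d + e'$ where $e' = y - \Sigma$ is the centered sampling error, for any unit-norm $u$ we have $u^\top y u = \langle u, u^\n\rangle^2 + \theta + u^\top e' u$. Hence maximizing $u^\top y u$ over $\|u\|_2 = 1$, $\|u\|_0 \le k$ is equivalent to maximizing $\langle u, u^\n\rangle^2 + u^\top e' u$ over the same set. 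The true spike direction $u = u^\n$ (which is itself $k$-sparse) achieves $\langle u, u^\n\rangle^2 = 1$; to conclude that no competitor can do better, I would show that with probability tending to one,
\begin{align}
    \sup_{\|u\|_2 = 1,\, \|u\|_0 \le k} \big| u^\top e' u \big| < \tfrac12 \big(1 - \langle \bar u, u^\n\rangle^2\big)
    \label{eq:proofSketchGoal}
\end{align}
for every $k$-sparse unit vector $\bar u$ bounded away from $\pm u^\n$; more precisely, I would argue that any maximizer $\widehat u$ must satisfy $\langle \widehat u, u^\n\rangle^2 \to 1$, which combined with $\|\widehat u\|_2 = 1$ forces $\widehat u (\widehat u)^\top \to A^\n$, and then (since the feasible set is closed and the limiting optimizer is unique up to sign) $\widehat u(\widehat u)^\top = A^\n$ in the limit.

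The technical core is a uniform deviation bound for $\sup_{\|u\|_2=1,\|u\|_0\le k} |u^\top e' u|$ where $e' = \frac1n\sum_{i=1}^n z_i z_i^\top - \Sigma$. I would handle this by a standard discretization argument: fix a support set $S \subset [d]$ with $|S| = k$, place a $\tfrac14$-net $\mathcal{N}_S$ on the unit sphere of $\R^S$ of cardinality at most $9^k$, and use the bound $\sup_{u} |u^\top e' u| \le 2 \sup_{u \in \mathcal{N}_S} |u^\top e' u|$ (the usual net-to-sphere comparison for quadratic forms). For a fixed $u$, $n\, u^\top e' u = \sum_i (\langle z_i, u\rangle^2 - u^\top \Sigma u)$ is a sum of independent centered sub-exponential random variables (each $\langle z_i, u\rangle$ being Gaussian with variance $u^\top\Sigma u = O(1)$), so Bernstein's inequality gives $\Pr\{|u^\top e' u| > t\} \le 2\exp(-c n \min(t^2, t))$ for $t$ bounded. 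Taking a union bound over the $\binom{d}{k} \le d^k$ support sets and the $9^k$ net points in each gives a failure probability at most $\exp\!\big(k\log d + k\log 9 - c n t^2\big)$ for small $t$, which goes to zero precisely under the hypothesis $k_l \log d_l / n_l \to 0$ once we choose, say, $t = t_l \to 0$ slowly. This shows $\sup |u^\top e' u| \to 0$ in probability along the sequence, which is more than enough for~\eqref{eq:proofSketchGoal}.

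The main obstacle — really the only delicate point — is turning the uniform smallness of $u^\top e' u$ into the exact identity $\widehat u(\widehat u)^\top = A^\n$ rather than mere closeness. Here I would use that the feasible set $\{uu^\top : \|u\|_2=1, \|u\|_0 \le k\}$ is a fixed compact set (for fixed $l$) and that the population objective $u \mapsto \langle u, u^\n\rangle^2$ has, on this set, a unique maximizer $u^\n(u^\n)^\top$ whenever $\|u^\n\|_0 \le k$; the argument is that if $\widehat u(\widehat u)^\top \ne A^\n$ then $\langle \widehat u, u^\n\rangle^2 \le 1 - \eta$ for some $\eta > 0$ depending on $l$, and once $\sup|u^\top e' u| < \eta/2$ the spike $u^\n$ strictly beats $\widehat u$, a contradiction. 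This needs the mild implicit assumption that $u^\n$ is genuinely $k$-sparse and is the unique (up to sign) top eigenvector of $\Sigma$, which holds by construction in~\eqref{eq:defnSigma}. A secondary, purely bookkeeping obstacle is making the ``asymptotically returns'' statement precise when the optimizer is only unique up to sign — this is handled by passing to the outer product $uu^\top$, which is sign-invariant, as the proposition statement already does. All the remaining estimates (sub-exponential tail bounds, covering numbers, union bounds) are routine empirical-process computations, in line with \cite[Proposition 1]{amini2008high}.
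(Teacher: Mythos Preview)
Your proposal is correct and follows essentially the same route as the paper: a Bernstein inequality for the sub-exponential random variables $(u^\top z_i)^2$, combined with a covering of the $k$-sparse unit sphere and a union bound, yields uniform convergence of $u^\top y u$ to $(u^\top u^\n)^2 + \theta$, after which uniqueness of the population maximizer finishes the argument. Your net-to-sphere comparison for quadratic forms is slightly cleaner than the paper's version, which instead takes a fine $\epsilon$-net (with $\epsilon = 1/d^2$) and controls the discretization error via a separate chi-square tail bound on $\frac{1}{n}\sum_i \|z_i\|_2^2$.
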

For the sake of comparison, recall from \cite[Theorem 3]{amini2008high} that it is impossible for \emph{any} method to discover the spike $A^\n$ if 
\begin{equation*}
\frac{k_l \log\l(d_l-k_l\r)}{n_l} > \frac{1}{\theta+\theta^2},
\end{equation*}
where $\theta$ is the noise level, see~\eqref{eq:defnSigma}. Moving on, we have so far focused on the spiked covariance \rev{setup}, i.e., the covariance matrix $\Sigma$ in~\eqref{eq:defnSigma} contains only one spike. 
When $\Sigma$ in~\eqref{eq:defnSigma} contains multiple spikes, the common alternative of deflation~\cite{mackey2009deflation} might be numerically unstable. Yet another alternative to deflation is to search for a subspace with sparse basis vectors, which all together forgoes the individual sparse components in favour of identifying a sparse subspace~\cite{vu2013minimax}. However, as we will see below, the proposed learning machine naturally generalizes to multiple spikes. More specifically, instead of \eqref{eq:defnSigma}, suppose that the covariance matrix $\Sigma$ is specified as
\begin{equation}
    \Sigma:= x^\n + \theta I_{d},
    \qquad x^\n:= \sum_{i=1}^r c_i^\n A_i^\n,
    \qquad c_i^\n\ge 0,\, A_i^\n\in \A, \, i\le r,
\end{equation}
where $r$ is the number of atoms (spikes) present in $\Sigma$. In view of~\eqref{eq:fullIneaxctModel}, this time our  inexact $r$-sparse {setup} is 
\begin{equation}
    y := \L(x^\n)+e, \qquad \L= \mathrm{id},\, e:= \frac{1}{n}\sum_{i=1}^n z_iz_i^\top - x^\n.
    \label{eq:spikedModel2}
    \tag{multiple spikes}
\end{equation}}{To recover the model~$x^\n$ and/or the spikes~$\{A_i^\n\}_{i=1}^r$, we may apply the machine~\eqref{eq:mainNonCvx} for any~$p\ge r$.}
\rev{In particular, the choice of~$p\ge r$ leads us to consider the learning machine
\begin{equation}
        \min\l\{\l\| y - \sum_{i=1}^p c_i A_i\r\|_\mathrm{F}^2: \sum_{i=1}^p c_i \le \tr(x^\n), \, c_i \ge 0, A_i\in \A \r\}.
        \label{eq:spca2}
        \tag{gauge$_p$ : SPCA}
\end{equation}
Note that $\tr(x^\n)$ in~\eqref{eq:spca2} might not be known advance. However, in principle, guarantees for~\eqref{eq:spca2} can be transferred to its basis pursuit reformulation in which the objective function and constraint of~\eqref{eq:spca2} are swapped, as discussed in Section~\ref{sec:old}.  
The following result provides the sufficient conditions for~\eqref{eq:spca2} to successfully recover the spikes.

\begin{prop}[{\sc Generalized spiked covariance setup}]\label{prop:sparcePCAgen}
    Consider the generalized spiked covariance {setup} in \eqref{eq:spikedModel2}. The machine~\eqref{eq:spca2}  asymptotically returns the spikes~$\{A_i^\n\}_{i=1}^r$ in~\eqref{eq:spikedModel2}, provided that $p<\mathrm{spark}(\A)-r$.  
    More specifically, consider  the same sequence $\{k_l,d_l,n_l\}_l$ as in Proposition \ref{prop:infLimit}.
    Then, in the limit of~$l\rightarrow\infty$, solving the optimization problem~\eqref{eq:spca2}
     successfully returns $\{c_i^\n,A_i^\n\}_{i=1}^r$ with probability that approaches one, provided that $p<\mathrm{spark}(\A)-r$.
     Recall that $\mathrm{spark}(\A)$ is the smallest number of atoms in $\A\subset \R^{d\times d}$ that form a linearly dependent subset of $ \R^{d\times d}$~\cite{donoho2003optimally}.
\end{prop}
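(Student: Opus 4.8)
\emph{Proof strategy.}
The plan is to treat the machine~\eqref{eq:spca2} — which, since $\L=\mathrm{id}$, is the (nonconvex) projection of $y$ onto $\{X:\gauge_{\A,p}(X)\le\tr(x^\n)\}$ — via the standard ``basic inequality'', while exploiting the specific structure of the noise $e=(\widehat\Sigma-\Sigma)+\theta I_{d}$, where $\widehat\Sigma:=\tfrac1n\sum_{i=1}^n z_iz_i^\top$. First I would check that $x^\n$ is itself feasible for~\eqref{eq:spca2}: writing $x^\n=\sum_{i=1}^r c_i^\n A_i^\n$ and padding with $p-r\ge 0$ trivial terms (legal since $r\le p$) yields a $p$-sparse conic decomposition whose coefficients sum to $\tr(x^\n)$. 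Hence the optimal value of~\eqref{eq:spca2} is at most $\|y-x^\n\|_{\mathrm F}^2=\|e\|_{\mathrm F}^2$, and if $\widehat x=\sum_{i=1}^p\widehat c_i\widehat A_i$ denotes an optimal decomposition then, substituting $y=x^\n+e$ into $\|y-\widehat x\|_{\mathrm F}^2\le\|e\|_{\mathrm F}^2$ and expanding, one obtains
\[
\|\widehat x-x^\n\|_{\mathrm F}^2\ \le\ 2\,\langle\widehat x-x^\n,\,e\rangle .
\]
Note that the general-purpose Lemma~\ref{lem:noisyLearnGauge2} applied with $\L=\mathrm{id}$ would only give $\|\widehat x-x^\n\|_{\mathrm F}\lesssim\|e\|_{\mathrm F}$, which is useless here because $\|e\|_{\mathrm F}\gtrsim\theta\sqrt d$ does not vanish; so a structure-aware bound on $\langle\widehat x-x^\n,e\rangle$ is needed.

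The heart of the argument is exactly that bound. Splitting $e=(\widehat\Sigma-\Sigma)+\theta I_{d}$, the identity-matrix part is annihilated by the constraint: $\langle\widehat x-x^\n,I_{d}\rangle=\tr(\widehat x)-\tr(x^\n)=\sum_i\widehat c_i-\tr(x^\n)\le 0$, since each atom $A=uu^\top\in\A$ has $\tr(A)=\|u\|_2^2=1$ and~\eqref{eq:spca2} forces $\sum_i\widehat c_i\le\tr(x^\n)$; as $\theta\ge 0$ this term is nonpositive. For the fluctuation part, both $\widehat x$ and $x^\n$ are conic combinations of atoms $uu^\top$ with $\|u\|_2=1,\ \|u\|_0\le k$ and total coefficient at most $\tr(x^\n)$, so $|\langle\widehat x-x^\n,\widehat\Sigma-\Sigma\rangle|\le 2\tr(x^\n)\,\widetilde E_n$, where $\widetilde E_n:=\sup\{\,|u^\top(\widehat\Sigma-\Sigma)u|:\|u\|_2=1,\ \|u\|_0\le k\,\}$. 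Combining, $\|\widehat x-x^\n\|_{\mathrm F}^2\le 4\tr(x^\n)\,\widetilde E_n$. The quantity $\widetilde E_n$ is precisely the sparse quadratic-form deviation already controlled in the proof of Proposition~\ref{prop:infLimit}: a Bernstein-type bound on $\|\widehat\Sigma_S-\Sigma_S\|_{\mathrm{op}}$ for a fixed support $S$ of size $k$, union-bounded over the $\binom{d}{k}$ supports, gives $\widetilde E_n\lesssim\|\Sigma\|_{\mathrm{op}}\big(\sqrt{k\log d/n}+k\log d/n\big)$, which tends to $0$ in probability under the hypothesis $k_l\log d_l/n_l\to 0$. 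Hence $\|\widehat x-x^\n\|_{\mathrm F}\to 0$ with probability approaching one.

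It remains to promote convergence of the matrix $\widehat x$ to convergence of its decomposition, and this is exactly where $p<\mathrm{spark}(\A)-r$ enters. That hypothesis means any collection of at most $p+r$ atoms of $\A$ is linearly independent, which forces $x^\n$ to admit a \emph{unique} $p$-sparse conic decomposition: if $\sum_{i=1}^p d_iB_i=\sum_{i=1}^r c_i^\n A_i^\n$ with $d_i\ge 0$, $B_i\in\A$, then collecting the net coefficient of each distinct atom (there are at most $p+r$ of them) and invoking linear independence shows the atoms carrying positive weight, together with those weights, must coincide with $\{(c_i^\n,A_i^\n)\}_{i=1}^r$ (in particular $\gauge_{\A,p}(x^\n)=\tr(x^\n)$, consistent with the feasibility step). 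A subsequence/compactness argument then finishes: $\A$ is compact and $\widehat c_i\le\tr(x^\n)$, so along any subsequence one can extract a further subsequence with $(\widehat c_i,\widehat A_i)\to(\bar c_i,\bar A_i)$; the previous two steps give $\sum_i\bar c_i\bar A_i=x^\n$, and uniqueness pins the limit to the true decomposition, whence $(\widehat c_i,\widehat A_i)_{i=1}^p\to(c_i^\n,A_i^\n)_{i=1}^r$ (up to relabelling and vanishing weights). This is the asserted asymptotic recovery.

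I expect the main obstacle to be this last step rather than the estimates: the map sending a matrix to a $p$-sparse conic decomposition is not obviously continuous, so one must argue carefully that the spark condition makes the true decomposition ``rigid'', and, to treat the regime $d_l\to\infty$ cleanly, one should first argue that the supports of the vectors underlying $\widehat A_1,\dots,\widehat A_p$ stabilize before appealing to compactness. By contrast, the empirical-process bound for $\widetilde E_n$ is inherited essentially verbatim from Proposition~\ref{prop:infLimit}, and the one genuinely new ingredient is the algebraic observation that the non-vanishing $\theta I_{d}$ component of the noise is killed by the trace (gauge) constraint of~\eqref{eq:spca2}.
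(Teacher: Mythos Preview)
Your proposal is correct and rests on exactly the same three ingredients as the paper: (i) the uniform sparse-quadratic-form bound $\widetilde E_n:=\sup_{\|u\|_2=1,\,\|u\|_0\le k}|u^\top(\widehat\Sigma-\Sigma)u|\to 0$, inherited from the proof of Proposition~\ref{prop:infLimit}; (ii) the trace identity $\tr(A)=1$ for every $A\in\A$, which neutralizes the $\theta I_d$ component of the noise via the gauge constraint; and (iii) the spark condition, which forces the $p$-sparse conic decomposition of $x^\n$ to be unique and hence $\gauge_{\A,p}(x^\n)=\tr(x^\n)$.

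The packaging differs slightly. The paper expands the objective, shows the random linear term $\sum_i c_i\langle y,A_i\rangle$ is uniformly close to its expectation $\sum_i c_i\langle\Sigma,A_i\rangle$ on the feasible set, and then argues that the random problem ``asymptotically has the same minimizers'' as the deterministic problem $\min\|\sum_i c_i^\n A_i^\n-\sum_i c_iA_i\|_{\mathrm F}^2$; the $\theta I_d$ term is handled by a relaxation that is later shown to be tight at the optimum. You instead go straight to the basic inequality $\|\widehat x-x^\n\|_{\mathrm F}^2\le 2\langle e,\widehat x-x^\n\rangle$, kill the $\theta I_d$ part by the sign observation $\tr(\widehat x)-\tr(x^\n)\le 0$, and bound the fluctuation part by $2\tr(x^\n)\,\widetilde E_n$. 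Your route is a bit more direct on the estimation side and more explicit about the final step---upgrading $\widehat x\to x^\n$ to convergence of the decomposition via compactness and uniqueness---which the paper handles only informally through the ``same minimizers'' claim. Neither proof fully addresses the $d_l\to\infty$ regime at that last step, and you are right to flag it.
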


For the sparse PCA alphabet in~\eqref{eq:spca2A}, we are not aware of any estimates for $\mathrm{spark}(\A)$ and it appears to be nontrivial to obtain one. Nevertheless, Proposition~\ref{prop:sparcePCAgen} posits that the new machine~\eqref{eq:spca2} succeeds when $p$ is sufficiently small. In contrast, the corresponding convex machine, which coincides with~\eqref{eq:spca2} for $p\ge d(d+1)/2+1$, might fail. This last claim about the value of $p$ follows from Remark~\ref{rem:genGaugeRem} and the fact that dimension of $\psd_+^d$ is $d(d+1)/2$.}

\section{Computational Aspects and a Tractable Numerical Scheme}\label{sec:algorithm}

\rev{This section discusses the computational aspects of solving the optimization problem~\eqref{eq:mainNonCvx}. As discussed earlier, this problem might be  nonconvex, particularly for small values of $p$. We identify three classes of alphabets:}

\begin{enumerate}[label=(\roman*), itemsep = 0mm, topsep = 0mm, leftmargin = 5mm]
    
\item For certain alphabets, the optimization landscape of the new machine~\eqref{eq:mainNonCvx} does not have any spurious stationary points and~\eqref{eq:mainNonCvx} is amenable to a variety of {standard} optimization algorithms. A prominent example {was discussed in} Remark~\ref{rem:bm-rem}, i.e., the well-known Burer-Monteiro factorization for certain matrix- or  tensor-valued learning problems, see~\cite{chi2019nonconvex,boumal2020deterministic,eftekhari2020implicit}.  

\item For certain other alphabets, such as smooth manifolds~\cite{eftekhari2015new,shah2011iterative} or shallow neural networks~\cite{safran2018spurious}, the optimization landscape of~\eqref{eq:mainNonCvx} might in general contain spurious stationary points which could potentially trap first- or second-order optimization algorithms, such as gradient descent. Nevertheless, problem~\eqref{eq:mainNonCvx} can be reformulated as a smooth nonconvex optimization problem and {then} solved efficiently to  stationarity (rather than global optimality) with a variety {of first- or second-order algorithms}~\cite{nocedal2006numerical}. This compromise (between optimality and tractability) is common in machine learning: As an example,  empirical risk minimization is known to be NP-hard for neural networks in general and the practitioners instead seek local (rather than global) optimality~\cite[Chapter 20]{shalev2014understanding}.

\item Yet for many other alphabets, such as the one in Example~\ref{ex:1} (sparse regression), the problem~\eqref{eq:mainNonCvx} is NP-hard in general for~$p<d$~\cite{natarajan1995sparse}. Moreover, the {second approach above is not}  directly applicable.  There are, however, compelling reasons to remain optimistic for such alphabets. For example, after decades of research, modern mixed-integer optimization algorithms that directly solve the problem~\eqref{eq:l0machine} for sparse regression are now competitive with convex heuristics in speed and scalability~\cite{bertsimas2020sparse,bertsimas2016best,vreugdenhil2021principal}. {We will pursue this direction in the next section. }
\end{enumerate}

\subsection{\rev{Tractable Numerical Scheme}} \label{sec:algorithm}

\rev{In this section, we  provide a tractable numerical scheme for solving the problem~\eqref{eq:mainNonCvx} for a finite learning alphabet. If the alphabet is infinite, it is sometimes possible to discretize it and apply the algorithm in this section, e.g., in super-resolution~\cite{tang2013sparse}. The algorithm in this section builds on the recent developments in mixed-integer programming~\cite{bertsimas2020sparse,bertsimas2016best,vreugdenhil2021principal}.} To begin, when the learning alphabet $\A$ is finite, the following lemma offers an exact reformulation of the problem~\eqref{eq:mainNonCvx} as a mixed integer quadratic programming~(MIQP). MIQP is in general an NP-hard problem~\cite{papadimitriou1981complexity}, {which comes at no surprise} since the original problem of sparse recovery of the {setup}~\eqref{eq:mainModel} is also known to be hard~\cite{natarajan1995sparse}. Nonetheless, {the lemma below} allows us to deploy the rich literature of computational mathematical programming dedicated to MIQP, see~\cite{bienstock1996computational,del2017mixed}.

\begin{lem}[{\sc MIQP reformulation}]\label{lem:MIQP}
    Suppose that the alphabet $\A \subset \R^d$ is finite {and denote its size by} $|\A|<\infty$. If $M > 0$ is sufficiently large, the machine~\eqref{eq:mainNonCvx} is equivalent to the MIQP optimization {problem}
    \begin{align}\label{gauge:MIQP}
     \min\limits_{c, s} \left\{\Big \|\sum_{i=1}^{|\A|} c_i \L(A_i) - y\Big \|^2_2 ~:~ \sum_{i=1}^{|\A|} c_i \le \gauge_{\A,p}(x^\n), \,\,  |c_i| \le M s_i,
    \,\, \sum_{i=1}^{|\A|} s_i = p ,\,\, c_i \ge 0, \,\, s_i \in \{0,1\}\right\}. 
    \end{align}
\end{lem}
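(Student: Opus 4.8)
The plan is to show the two optimization problems have the same optimal value and that optimal solutions correspond to one another. Recall from the reformulation~\eqref{eq:mainNonCvx} that the machine~\eqref{eq:mainNonCvx} seeks coefficients $\{c_i\}_{i=1}^p$ and atoms $\{A_i\}_{i=1}^p \subset \A$ minimizing $\|\sum_{i=1}^p c_i \L(A_i) - y\|_2^2$ subject to $\sum_{i=1}^p c_i \le \gauge_{\A,p}(x^\n)$ and $c_i \ge 0$. Since $\A = \{A_1,\dots,A_{|\A|}\}$ is finite, I would re-index: rather than choosing $p$ atoms (with possible repetition), observe that any $p$-sparse combination $\sum_{i=1}^p c_i A_i$ can be rewritten as $\sum_{j=1}^{|\A|} \tilde c_j A_j$ where at most $p$ of the $\tilde c_j$ are nonzero — simply collect repeated atoms. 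Conversely, any such $\sum_j \tilde c_j A_j$ with support of size $\le p$ is a valid $p$-sparse combination. (Here one uses that repetition of an atom only decreases sparsity after collection, and Proposition~\ref{prop:gauge2EquivDefn}\ref{prop:gauge_p:sparse} / Definition~\ref{def:gauge_p} permit fewer than $p$ terms via the convention $0/0=0$ and the fact that $0\in\A$ or by padding with zero coefficients.) This establishes that~\eqref{eq:mainNonCvx} is equivalent to minimizing $\|\sum_{j=1}^{|\A|} c_j \L(A_j) - y\|_2^2$ over $c \ge 0$ with $\sum_j c_j \le \gauge_{\A,p}(x^\n)$ and $\|c\|_0 \le p$.

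The remaining step is the standard big-$M$ linearization of the cardinality constraint $\|c\|_0 \le p$. Introduce binary indicators $s_j \in \{0,1\}$ with the coupling $|c_j| \le M s_j$ (equivalently $c_j \le M s_j$ since $c_j \ge 0$) and the budget $\sum_j s_j = p$. I would argue both inclusions: (a) given a feasible $c$ for the cardinality-constrained problem with support $T$, $|T|\le p$, set $s_j = 1$ for $j \in T$ and extend $T$ arbitrarily to a set of size exactly $p$, setting the remaining $s_j=1$ there and $s_j = 0$ elsewhere; then $c_j \le M s_j$ holds for every $j$ provided $M \ge \max_j c_j$, and $\sum_j s_j = p$; (b) conversely, given $(c,s)$ feasible for~\eqref{gauge:MIQP}, the constraint $c_j \le M s_j$ forces $c_j = 0$ whenever $s_j = 0$, so $\mathrm{supp}(c) \subseteq \{j: s_j = 1\}$, which has cardinality $p$, hence $\|c\|_0 \le p$. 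In both directions the objective value is unchanged, so the optimal values coincide and minimizers map to minimizers.

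The one genuinely non-routine point is making the phrase "$M$ sufficiently large" precise and verifying that such a finite $M$ exists — i.e., that the optimal $c$ is a priori bounded. I would handle this by noting that the feasible set already enforces $0 \le c_j$ and $\sum_j c_j \le \gauge_{\A,p}(x^\n)$, so every feasible $c$ satisfies $c_j \le \gauge_{\A,p}(x^\n)$ for all $j$; hence any $M \ge \gauge_{\A,p}(x^\n)$ works, and in fact the big-$M$ constraint is then never binding against the simplex-type constraint, so no spurious restriction is introduced. (If one wishes to allow $M$ not to dominate $\gauge_{\A,p}(x^\n)$, a more careful argument bounding the optimal coefficients in terms of $\|y\|_2$ and the conditioning of $\L$ restricted to $p$-element subsets of $\A$ would be needed, but the simplex bound already suffices and is cleanest.) I expect this boundedness bookkeeping, together with carefully stating the correspondence when the optimal support has size strictly less than $p$ (so that some $s_j = 1$ with $c_j = 0$), to be the only places requiring care; the rest is a direct transcription of the classical big-$M$ sparse-regression reformulation of~\cite{bertsimas2016best,bertsimas2020sparse} to the present atomic setting.
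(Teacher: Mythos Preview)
Your proposal is correct and follows essentially the same approach as the paper. The paper's own proof is a two-sentence remark that the big-$M$ constraint forces $c_i = 0$ whenever $s_i = 0$, so $\sum_i s_i = p$ enforces $p$-sparsity; your version is strictly more detailed (you spell out the re-indexing from $p$ chosen atoms to a coefficient vector over all of $\A$, give both directions of the equivalence, and make ``$M$ sufficiently large'' explicit via the simplex bound $c_j \le \gauge_{\A,p}(x^\n)$), but the underlying idea is identical.
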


The MIQP reformulation in Lemma~\ref{lem:MIQP} {leverages} the so-called ``big-$M$'' technique {in which} $M$ is only required to be a sufficiently large constant. It is well known that the choice of $M$ 
has a significant impact on the performance of cutting plane algorithms for convex integer optimization~\cite{bigM}. 

To address this issue, inspired by the recent work of~\cite{bertsimas2020sparse}, we {next provide} a dual reformulation of the optimization {problem} in Lemma~\ref{lem:MIQP}. \rev{This dual reformulation supplies} a good starting point (warm start) for branch-and-bound algorithms. \rev{This reformulation is a slight generalization of the one proposed in~\cite{bertsimas2020sparse} which also handles the linear constraints such as $c_i \ge 0$ and $\sum_{i=1}^{|\A|} c_i \le \gauge_{\A,p}(x^\n)$. Below, we will use the notation $[n]:=\{1,\cdots,n\}$ for an integer $n$.}

\begin{prop}[{\sc Tractable algorithm}]\label{prop:alg}
    Let us define the matrices 
    \begin{align*}
        \Dic = \begin{bmatrix}A_1 & A_2 & \dots & A_{|\A|}\end{bmatrix} \in \mathbb{R}^{d\times |\A|}, \quad C = \begin{bmatrix}-I_{|\A|} \vspace{1mm}\\ \ones_{|\A|}^\top \end{bmatrix}, \quad g = \begin{bmatrix}0_{|\A|} \vspace{1mm}\\ \gauge_{\A,p}(x^\n)\end{bmatrix},
    \end{align*}
    where~$|\A|$ is the size of the finite set~$\A$. {Above},~$I_{|\A|}\in \R^{|\A|\times |\A|}$ is the identity matrix,~$\ones_{|\A|}\in \mathbb{R}^{|\A|}$ is a vector of all ones, and~$0_{|\A|}\in \mathbb{R}^{|\A|}$ is a vector of zeros. The optimal value of the \rev{optimization problem}~\eqref{gauge:MIQP} coincides with the {optimal value of the} minimax {problem}
    \begin{align}\label{gauge:minimax}
        \min_{{\footnotesize \begin{array}{c} \Set \subset [|\A|]\\ |\Set| \le p\end{array}}} \max_{\mu \ge 0, \lambda} 
        -\frac{1}{2} \|\lambda\|_2^2 - \langle g, \mu\rangle - \frac{\gamma}{2} \sum_{i \in \Set} \big((\L(\Dic))^\top \lambda - C^\top \mu\big)_i^2,
    \end{align}
    where~$\gamma$ is a sufficiently large constant, $\L(\A)=[\L(A_1)\,\L(A_2)\,\cdots\,\L(A_{|\A|})]\in \mathbb{R}^{m\times |\A|}$, and the \rev{notation}~$(u)_i$ returns the~$i^{\mathrm{th}}$ coordinate of {the} vector~$u$. 
    \rev{Moreover, an optimal vector $s\in \{0,1\}^{|\A|}$ for the problem~\eqref{gauge:MIQP} corresponds to an optimal set $S \subset [|\A|]$ in~\eqref{gauge:minimax}. Lastly, any fixed-point of the algorithm below is a 
    solution of~\eqref{gauge:minimax}.}
    \begin{subequations}\label{gauge:alg} 
    \begin{align}
        \label{gauge:alg:concave}
        \begin{bmatrix} \lambda_{k+1} \\ \mu_{k+1} \end{bmatrix} & = \arg\max_{\mu \ge 0, \,\lambda} \, -\frac{1}{2} \|\lambda\|_2^2 - \langle g, \mu\rangle - \frac{\gamma}{2} \sum_{i \in \Set_k} \big((\L(\Dic))^\top \lambda - C^\top \mu\big)_i^2, \\
        \label{gauge:alg:Set}
        \Set_{k+1} &= \arg\max_{\substack{\Set \subset [|\A|]\\  |\Set| \le p}}\sum_{i \in \Set} \big((\L(\Dic))^\top \lambda_k - C^\top \mu_k\big)_i^2.
    \end{align}
    \end{subequations}
\end{prop}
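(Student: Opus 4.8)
The plan is to establish the equivalence of~\eqref{gauge:MIQP} and~\eqref{gauge:minimax} in two stages, and then to verify the fixed-point claim about the alternating scheme~\eqref{gauge:alg}. First I would rewrite~\eqref{gauge:MIQP} by treating the binary support pattern $s\in\{0,1\}^{|\A|}$ as the choice of a set $\Set=\{i:s_i=1\}$, with the cardinality constraint $\sum_i s_i=p$ becoming $|\Set|\le p$ (the relaxation from $=p$ to $\le p$ is harmless because adding zero coefficients never hurts the objective; this needs a one-line monotonicity remark). For a \emph{fixed} set $\Set$, problem~\eqref{gauge:MIQP} becomes an ordinary convex quadratic program in the variables $c_i$, $i\in\Set$ (with $c_i=0$ forced for $i\notin\Set$), subject to the linear constraints $Cc\le g$, i.e.\ $c\ge0$ and $\ones^\top c\le\gauge_{\A,p}(x^\n)$, together with the box constraints $|c_i|\le M s_i$. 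Following~\cite{bertsimas2020sparse}, I would argue that for $M$ (equivalently $\gamma$) large enough the box constraints are inactive at optimum, so they may be dropped; then the inner QP is $\min_{c:\ Cc\le g,\ \supp(c)\subseteq\Set}\ \tfrac12\|\L(\Dic)_\Set\, c_\Set - y\|_2^2$ up to the harmless scaling by $\gamma$ introduced to match their normalization.

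The core step is to dualize this inner QP. Introducing a Lagrange multiplier $\mu\ge0$ for $Cc\le g$ and computing the dual (the primal is a convex quadratic with linear constraints, and Slater holds since $c=0$ is strictly feasible for the $\ones^\top c\le\gauge$ constraint when $\gauge>0$, so strong duality applies), the inner minimum over $c_\Set$ of $\tfrac{\gamma}{2}\|\L(\Dic)_\Set c_\Set - y\|^2 - \langle \mu, g - Cc\rangle$ is an unconstrained quadratic in $c_\Set$; carrying out the minimization and simplifying with an auxiliary variable $\lambda$ (a standard Fenchel-style substitution $\lambda \leftrightarrow \gamma(y - \L(\Dic)_\Set c_\Set)$, exactly as in~\cite[Section~2]{bertsimas2020sparse}) produces the concave objective $-\tfrac12\|\lambda\|_2^2 - \langle g,\mu\rangle - \tfrac{\gamma}{2}\sum_{i\in\Set}\big((\L(\Dic))^\top\lambda - C^\top\mu\big)_i^2$. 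Here the key structural point, and the reason the support set only enters through the final sum, is that $\big(\L(\Dic)^\top\lambda - C^\top\mu\big)_i$ is the reduced quantity attached to atom $i$, so restricting to $\Set$ simply drops the terms $i\notin\Set$. Taking $\max$ over $(\mu\ge0,\lambda)$ inside and $\min$ over $\Set$ with $|\Set|\le p$ outside gives~\eqref{gauge:minimax}, and the correspondence $s\leftrightarrow\Set$ is immediate from the construction.

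For the final assertion, suppose $(\lambda_k,\mu_k,\Set_k)$ is a fixed point of~\eqref{gauge:alg}, i.e.\ $(\lambda_{k+1},\mu_{k+1})=(\lambda_k,\mu_k)$ and $\Set_{k+1}=\Set_k$. Then~\eqref{gauge:alg:concave} says $(\lambda_k,\mu_k)$ maximizes the concave inner objective for the set $\Set_k$, and~\eqref{gauge:alg:Set} says $\Set_k$ maximizes $\sum_{i\in\Set}\big(\L(\Dic)^\top\lambda_k - C^\top\mu_k\big)_i^2$ over $|\Set|\le p$ — but in~\eqref{gauge:minimax} this sum enters with a \emph{minus} sign under the outer $\min$, so $\Set_k$ minimizes the outer objective given $(\lambda_k,\mu_k)$. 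Thus $(\Set_k,\lambda_k,\mu_k)$ satisfies the first-order/saddle conditions of the minimax~\eqref{gauge:minimax}; by concavity in $(\lambda,\mu)$ for each fixed $\Set$ (and finiteness of the set of $\Set$'s), this stationarity is enough to conclude it is a solution of~\eqref{gauge:minimax}. The main obstacle I anticipate is the careful "big-$M$/large-$\gamma$" bookkeeping: one must quantify how large $M$ (resp.\ $\gamma$) needs to be — in terms of $\|\L(\Dic)\|$, $\gauge_{\A,p}(x^\n)$, and $\|y\|$ — so that dropping the box constraints is genuinely without loss, and then track this constant consistently through the dualization so that~\eqref{gauge:MIQP} and~\eqref{gauge:minimax} have \emph{exactly} the same optimal value rather than merely being related by a bound. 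Getting the sign conventions and the $\lambda$-substitution to line up with the statement is routine but error-prone, so I would do it slowly.
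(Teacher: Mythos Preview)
Your overall architecture---split into an outer discrete minimization over the support $\Set$ and an inner convex QP, dualize the inner problem, then read off the saddle-point property of a fixed point of~\eqref{gauge:alg}---is exactly the paper's route. The fixed-point part of your argument is fine and matches the paper's: a fixed point is simultaneously a best response for both players, hence a saddle point of the zero-sum game, hence a solution of the minimax.

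The gap is in the dualization itself, and it is not just bookkeeping. You propose to dualize $\min_{c_\Set:\,Cc_\Set\le g}\|\L(\Dic)_\Set c_\Set - y\|_2^2$ directly. But after introducing $\mu\ge 0$ for $Cc_\Set\le g$ and your auxiliary $\lambda$ for the residual, the Lagrangian is \emph{linear} in $c_\Set$: the coefficient of $(c_\Set)_i$ is $\big((\L(\Dic))^\top\lambda + C^\top\mu\big)_i$. The infimum over $c_\Set$ is then $-\infty$ unless this coefficient vanishes on $\Set$, which forces the \emph{hard} constraint $\big((\L(\Dic))^\top\lambda + C^\top\mu\big)_i=0$ for all $i\in\Set$ in the dual, not the soft quadratic penalty $-\tfrac{\gamma}{2}\sum_{i\in\Set}(\cdot)_i^2$ that appears in~\eqref{gauge:minimax}. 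No ``Fenchel-style substitution'' for $\lambda$ alone can manufacture that penalty.

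What the paper does---and what \cite{bertsimas2020sparse} actually does---is to first add a ridge term $\tfrac{1}{\gamma}\|c_\Set\|_2^2$ to the primal objective (arguing via compactness of the feasible set that for $\gamma$ large enough this does not move the optimizer), \emph{then} dualize. The ridge term makes the Lagrangian strongly convex in $c_\Set$, the inner minimization becomes an unconstrained quadratic with a unique minimizer, and eliminating $c_\Set$ produces precisely $-\tfrac{\gamma}{2}\sum_{i\in\Set}\big((\L(\Dic))^\top\lambda - C^\top\mu\big)_i^2$. So $\gamma$ is the reciprocal of the ridge weight, not a rescaling of the original loss; your identification ``$M$ (equivalently $\gamma$)'' conflates two different large constants that play different roles (big-$M$ linearizes the support constraint in~\eqref{gauge:MIQP}; $\gamma$ is the added regularization). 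Once you insert the ridge step, the rest of your plan goes through.
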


We emphasize that  the objective function in \eqref{gauge:alg:concave} is concave and quadratic jointly in the variables~$(\lambda,\,\mu)$, {and} the number of the summands {in~\eqref{gauge:alg:concave}} is the sparsity level~$|\Set| = p$. Moreover, and more importantly, the set-valued optimization~\eqref{gauge:alg:Set} admits an (almost) analytic solution as it suffices to select only the first~$p$ coordinates~$i \le |\A|$ {for which} $\big((\mathcal{L}(A))^\top \lambda_k - C^\top \mu_k\big)_i^2$ is maximized. The complexity of this step reduces to a sorting algorithm. Therefore, the algorithm~\eqref{gauge:alg} is indeed computationally a highly tractable implementation of the machine~\eqref{eq:mainNonCvx}, which may merit a more comprehensive numerical investigation in the future. 

If the iteration of~\eqref{gauge:alg} converges, we solve the the learning machine~\eqref{eq:mainNonCvx}. However, it often happens that the algorithm~\eqref{gauge:alg} oscillates between several discrete solutions~$\Set$. We note that these candidates can then be chosen as ``warm start'' for the MIQP problem~\eqref{gauge:MIQP} \`{a} la~\cite{bertsimas2020sparse}.

\subsection{Numerical Examples}\label{sec:numerics}
\rev{We now investigate two numerical examples that support the theoretical findings of this work. 

\subsubsection*{Example 1: sparse PCA}
Our first numerical example is a simplified version of sparse PCA in Section~\ref{sec:sparsePCA}. More specifically, here the alphabet $\A$ of size 35 is sub-sampled from the set
\begin{align*}
    \left\{ uu^\top : \|u\|_2 =1,\, \|u\|_0 \le 2,\, u\in \R^4  \right\} \subset \R^{4\times 4},
\end{align*}
and the model $x^\n$ in this example is a  combination of three atoms from~$\A$, i.e., $x^\n = \frac{1}{3} A_1^\n+\frac{1}{3} A_2^\n + \frac{1}{3}A_3^\n$. The setup is $y = x^\n+e$, where $e$ is the white Gaussian noise with the standard deviation~$\sigma_e\in \{0.01, 0.05, 0.1\}$. For~$p\in \{1,2,3,16\}$ and various values of~$\psi$, we numerically solve the program
\begin{align}
\label{gauge:MIQP3}
     \min\limits_{c, s} \left\{\Big \|\sum_{i=1}^{|\A|} c_i A_i - y\Big \|^2_2 ~:~ \sum_{i=1}^{|\A|} c_i \le \psi, \,\,  |c_i| \le M s_i,
    \,\, \sum_{i=1}^{|\A|} s_i = p ,\,\, c_i \ge 0, \,\, s_i \in \{0,1\}\right\}.
\end{align}
Note that the above optimization problem is an instantiation of the program~\eqref{gauge:MIQP} in which the observation operator is the identity matrix ($\mathcal{L}=\mathrm{id}$) and the the ground-truth gauge value $\gauge_{\A,p}(x^\n)$ is approximated by different values $\psi$. The vector of coefficients that minimize the above problem is denoted by $\widehat{c}_p(\psi)$. Also note that for $p=16$, the problem~\eqref{gauge:MIQP3} coincides with the convex machine~\eqref{eq:mainCvx}, see Remark~\ref{rem:genGaugeRem}.

\begin{figure}[t!]
    \centering
    \subfloat[$\sigma_e = 0.01$]{\includegraphics[scale=0.37, clip]{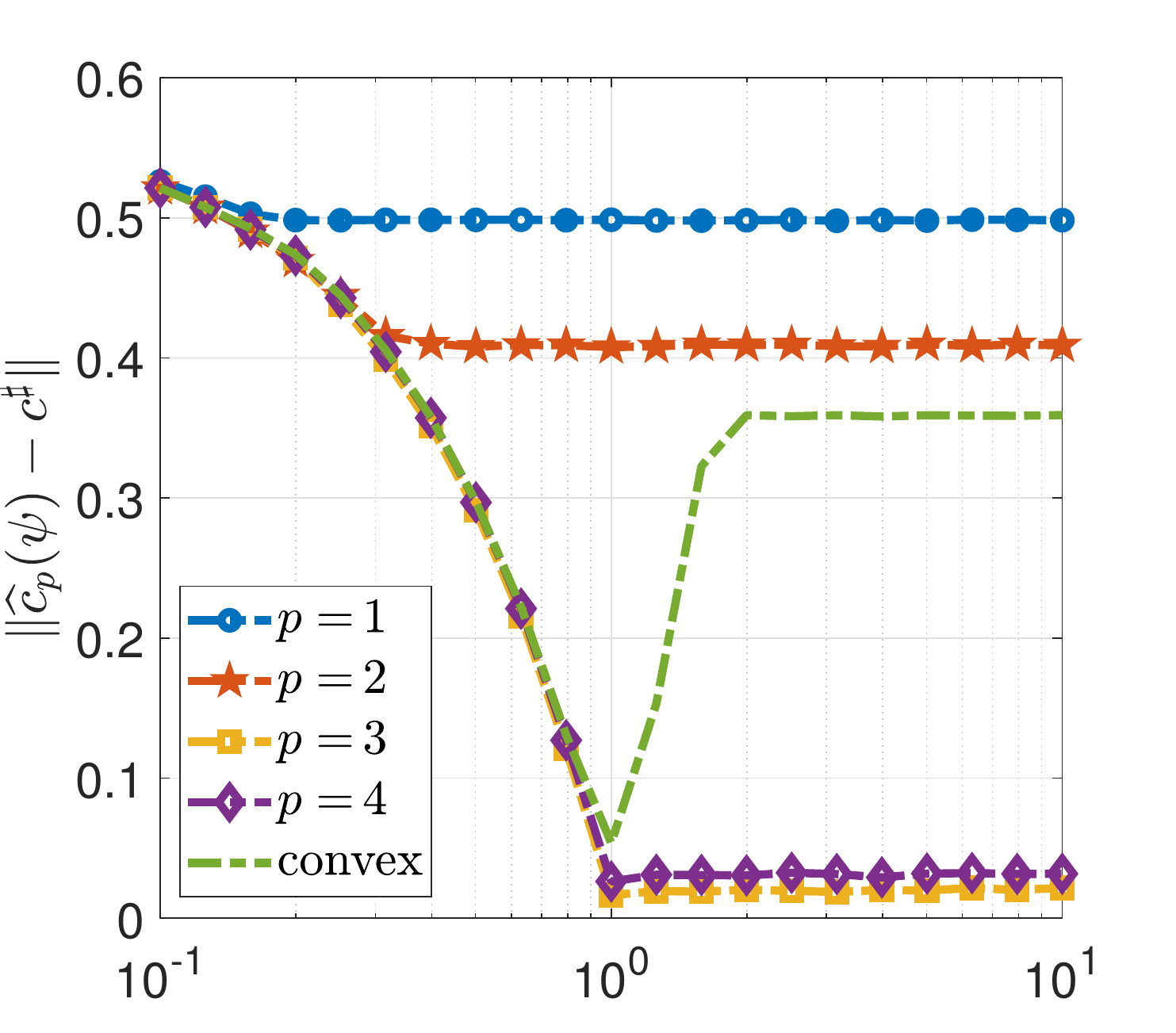}\label{fig:reso_log21}}
    \subfloat[$\sigma_e = 0.05$]{\includegraphics[scale=0.37, clip]{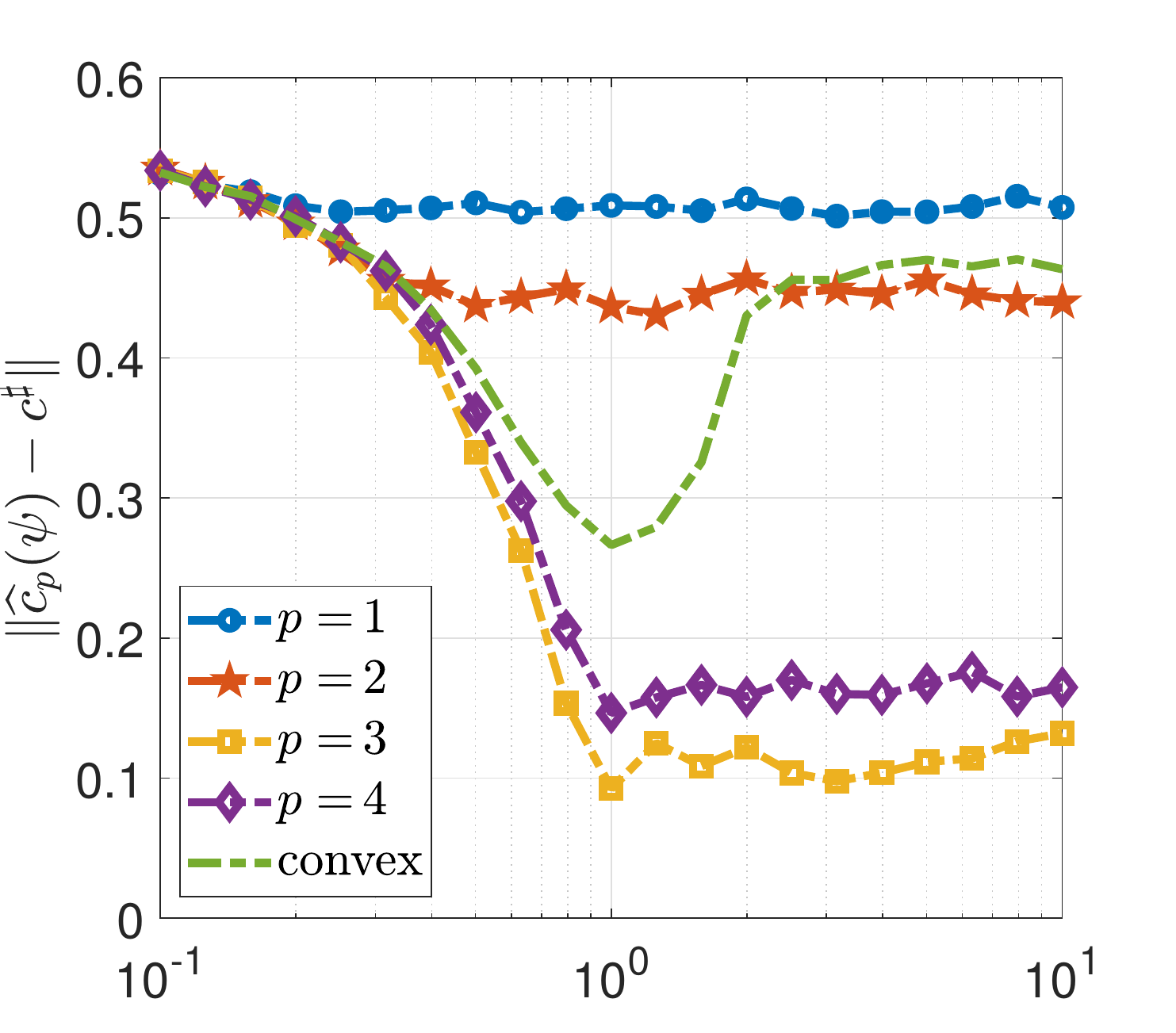}\label{fig:reso_log22}}
    \subfloat[$\sigma_e = 0.1$]{\includegraphics[scale=0.37, clip]{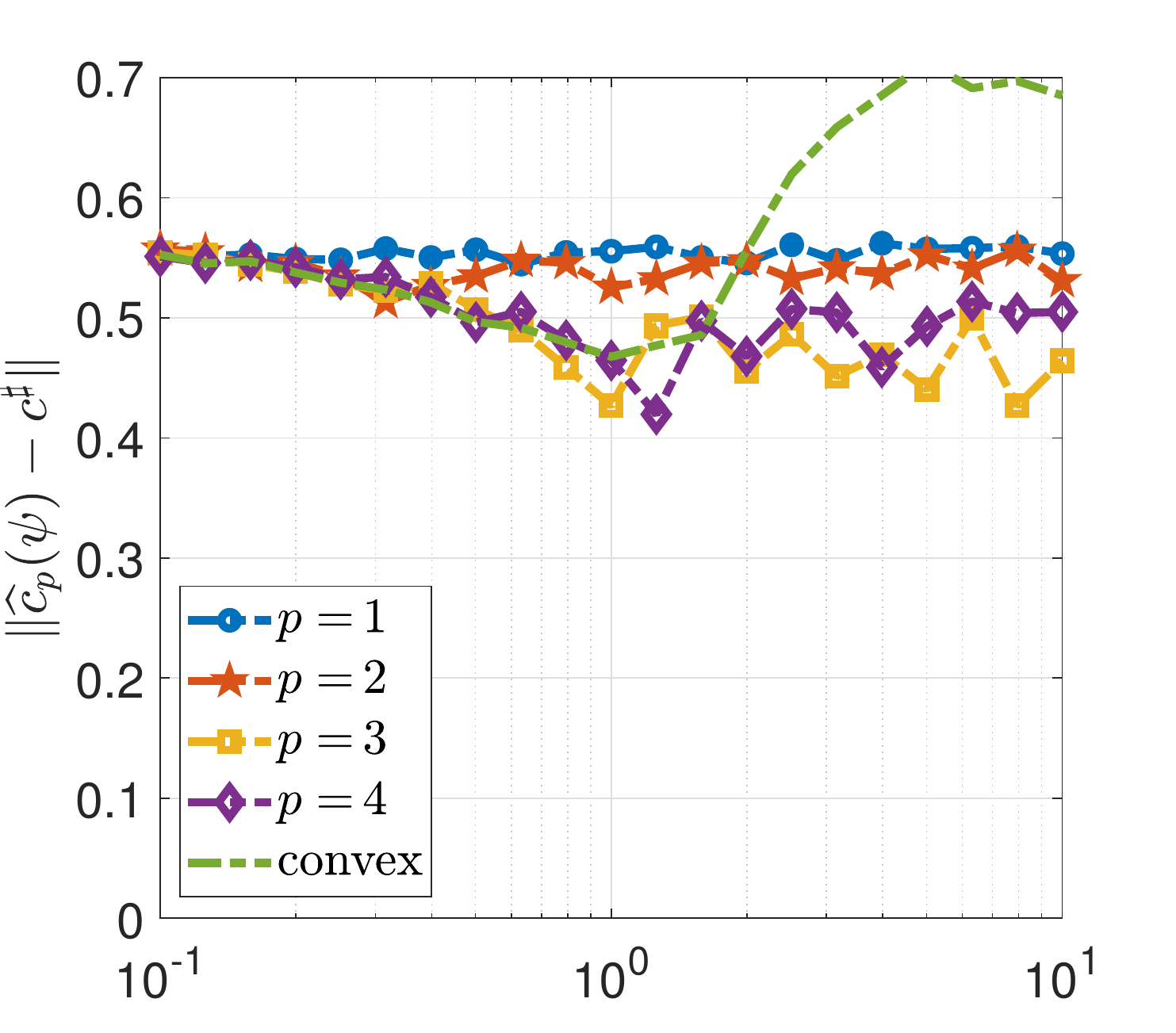}\label{fig:reso_log23}}
    \caption{\rev{Sparse PCA with the sparsity level $\|c^\n\|_0 = 3$: performance of the nonconvex machine~\eqref{eq:mainNonCvx} for $p \in \{1,2,3,4\}$ versus the convex counterpart~\eqref{eq:mainCvx} (or equivalently~\eqref{eq:mainNonCvx} for $p = |\A| = 16$)}}
    \label{fig:sparsePCAnumerics}
\end{figure}

The numerical results are reported in Figure~\ref{fig:sparsePCAnumerics} for different noise levels. Each plot shows the average error across $100$ independent experiments with fresh realizations of the noise vector. 
For~$p\in\{1,2,3\}$, we solved~\eqref{gauge:MIQP3} using the solver MOSEK with the interface of YALMIP~\cite{Lofberg2004}. The sharp transition in the plots can be explained by the fact that the true model~$x^\n$ is not feasible in \eqref{gauge:MIQP3} for small values of~$\psi$. Moreover, the poor performance for $p=1$ is explained by the fact that~$x^\n$, with the sparsity level of two, is never feasible for problem~\eqref{gauge:MIQP3} with~$p=1$. However, for $p\in\{2,3\}$, the proposed machine~\eqref{gauge:MIQP3} considerably outperforms the convex machine~\eqref{eq:mainCvx}.

} 

\subsubsection*{Example 2: Super-resolution}

\rev{Our second numerical example showcases the failure of the classical theory in the context of super-resolution below the diffraction limit. We take the alphabet to be
    $\A:=\{A_\theta\}_{\theta\in \Theta}$,  where $A_\theta(\cdot) = \exp(-( \cdot -\theta)^2/0.35^2)$ and $\Theta=\{\theta_i\}_{i=1}^{20}$ is the uniform grid over the interval~$ [0,1]$. 
    In words, our  alphabet is comprised of twenty (scaled) Gaussian waves centered on a uniform grid over $[0,1]$.}
In this example, we consider the two-sparse model
$x^\n :=  A_{\theta_{10}}- A_{\theta_{11}}$. This model is shown  in blue in Figure~\ref{fig:superRes}. In words, the blue curve in Figure~\ref{fig:superRes} represents the superposition of two (scaled) Gaussian waves and the two red bars show the centers and amplitudes of these two waves. The red bars are scaled to fit in the figure.
\\
\begin{wrapfigure}[11]{r}{0.45\textwidth}
\centering
\includegraphics[width=.3\textwidth]{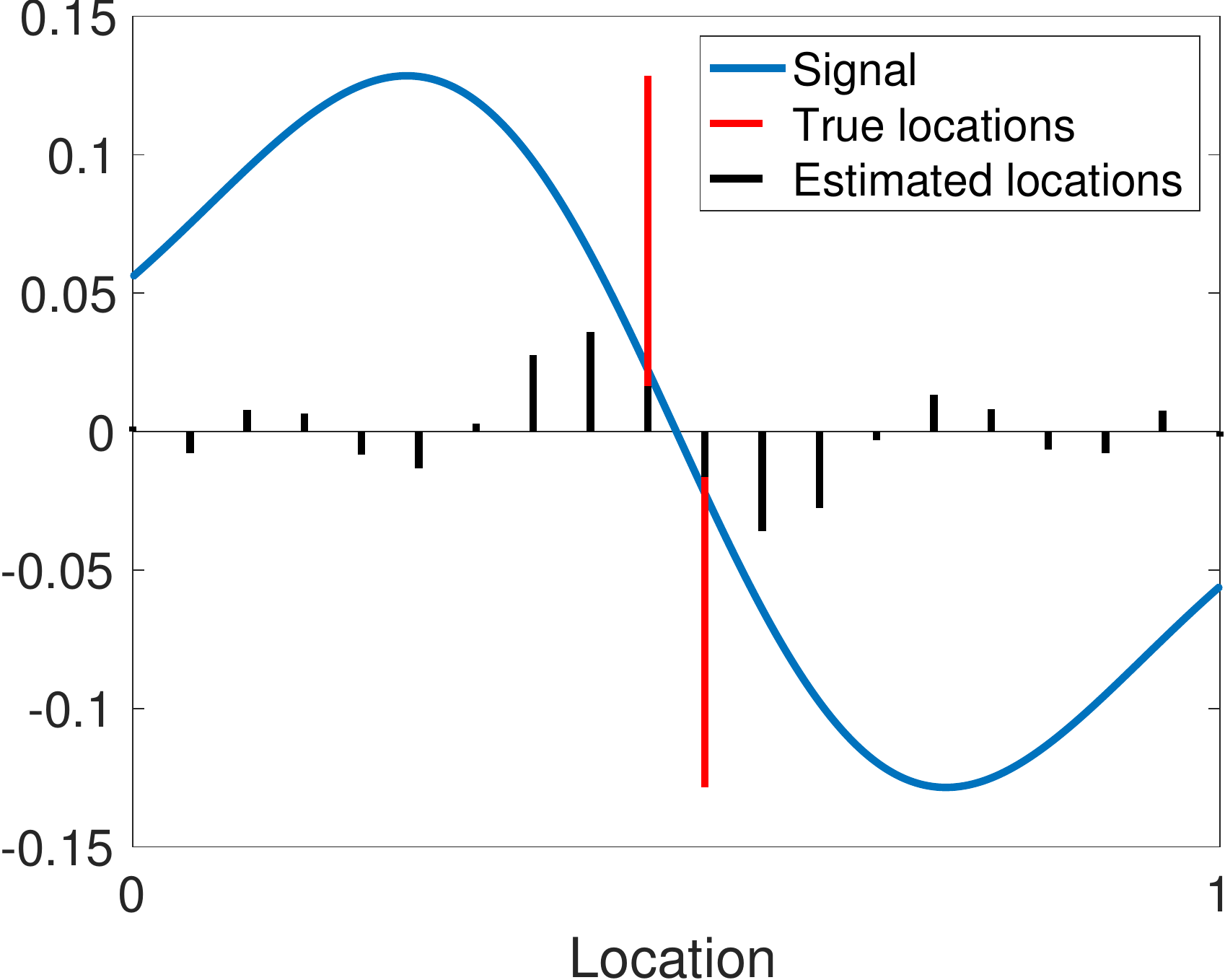}
\caption{\footnotesize{Failure of the classical gauge function theory in  super-resolution, see Section~\ref{sec:numerics}.}}
\label{fig:superRes}
\end{wrapfigure}
\rev{The (exact) values of the blue curve are then observed at~$m=40$ random locations on the interval~$[0,1]$ and then stored in a vector~$y\in \R^{40}$. 
That is, we set $e=0$ and the corresponding linear operator~$\L$ in~\eqref{eq:fullIneaxctModel} evaluates and stores the values of its input function at~$40$ random locations on the interval~$[0,1]$.}

We then estimated the true centers and amplitudes (in red) by solving the convex quadratic problem~\eqref{eq:mainCvx} with YALMIP in MATLAB~\cite{Lofberg2004}. The estimated centers and amplitudes are shown \rev{with the} black bars in Figure~\ref{fig:superRes}. The black bars are also scaled to fit in the figure. \rev{The resounding failure of the convex machine in Figure~\ref{fig:superRes} in learning the location of the red bars is an example that visualizes the difficulty of super-resolution below the diffraction limit~\cite{eftekhari2019sparse}. 
Motivated by this observation, we now apply the framework developed in this paper.
We let the noise $e$ in~\eqref{eq:fullIneaxctModel} be a vector of zero-mean and independent Gaussian random variables with standard deviation~$\sigma_e\in\{10^{-3},10^{-2}, 10^{-1}\}$.} 
For various values of~$\psi$ and integer~$p\in \{1,2,3,20\}$, we then numerically solve the problem
\begin{align}\label{gauge:MIQP2}
     \min\limits_{c, s} \left\{\Big \|\sum_{i=1}^{|\A|} c_i \L(A_i) - y\Big \|^2_2 ~:~ \sum_{i=1}^{|\A|} c_i \le \psi, \,\,  |c_i| \le M s_i,
    \,\, \sum_{i=1}^{|\A|} s_i = p ,\,\, c_i \ge 0, \,\, s_i \in \{0,1\}\right\},
\end{align}
and collect the optimal coefficients in the vector $\widehat{c}$. \rev{The above problem should be compared with~\eqref{gauge:MIQP}, which was an equivalent reformulation of the proposed learning machine~\eqref{eq:mainNonCvx}.} 

Each plot {in Figure~\eqref{fig:numerics}} is obtained by averaging the recovery errors over~$200$ independent experiments with {fresh} realizations of the noise vector. For~$p\in\{1,2,3\}$, we {solved~\eqref{gauge:MIQP2} using} MOSEK with the interface of YALMIP~\cite{Lofberg2004}.  For~$p=|\A|=20$, {however,} the problem~\eqref{gauge:MIQP2} is  a convex program. Indeed, for the choice of~$p=20$, the problem~\eqref{gauge:MIQP2} coincides with the convex machine~\eqref{eq:mainCvx}, {see Remark~\ref{rem:genGaugeRem}.} {As in Figure~\ref{fig:sparsePCAnumerics},} the sharp transitions in Figure~\ref{fig:numerics} {can be} explained by the fact that the true model~$x^\n$ is not feasible {in \eqref{gauge:MIQP2}} for small values of~$\psi$. Similarly, the poor performance for $p=1$ is explained by the fact that~$x^\n$, with the sparsity level of two, is never feasible for problem~\eqref{gauge:MIQP2} with~$p=1$. Lastly, for $p\in\{2,3\}$, the proposed machine~\eqref{gauge:MIQP2} considerably outperforms the convex machine~\eqref{eq:mainCvx}. 
\begin{figure}[t!]
    \centering
    \subfloat[$\sigma_e = 0.001$]{\includegraphics[scale=0.36, clip]{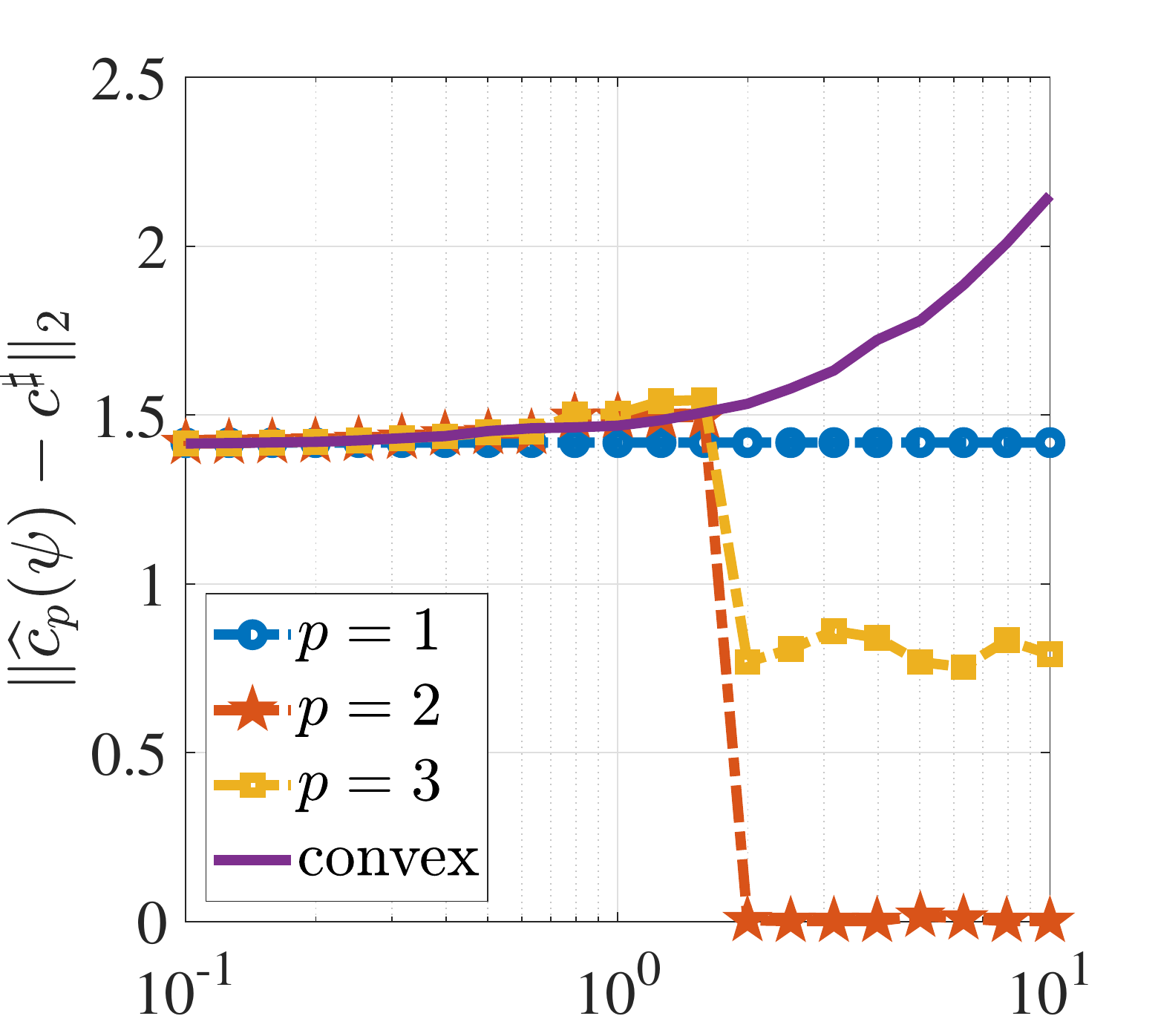}\label{fig:reso_log21}}
    \subfloat[$\sigma_e = 0.01$]{\includegraphics[scale=0.36, clip]{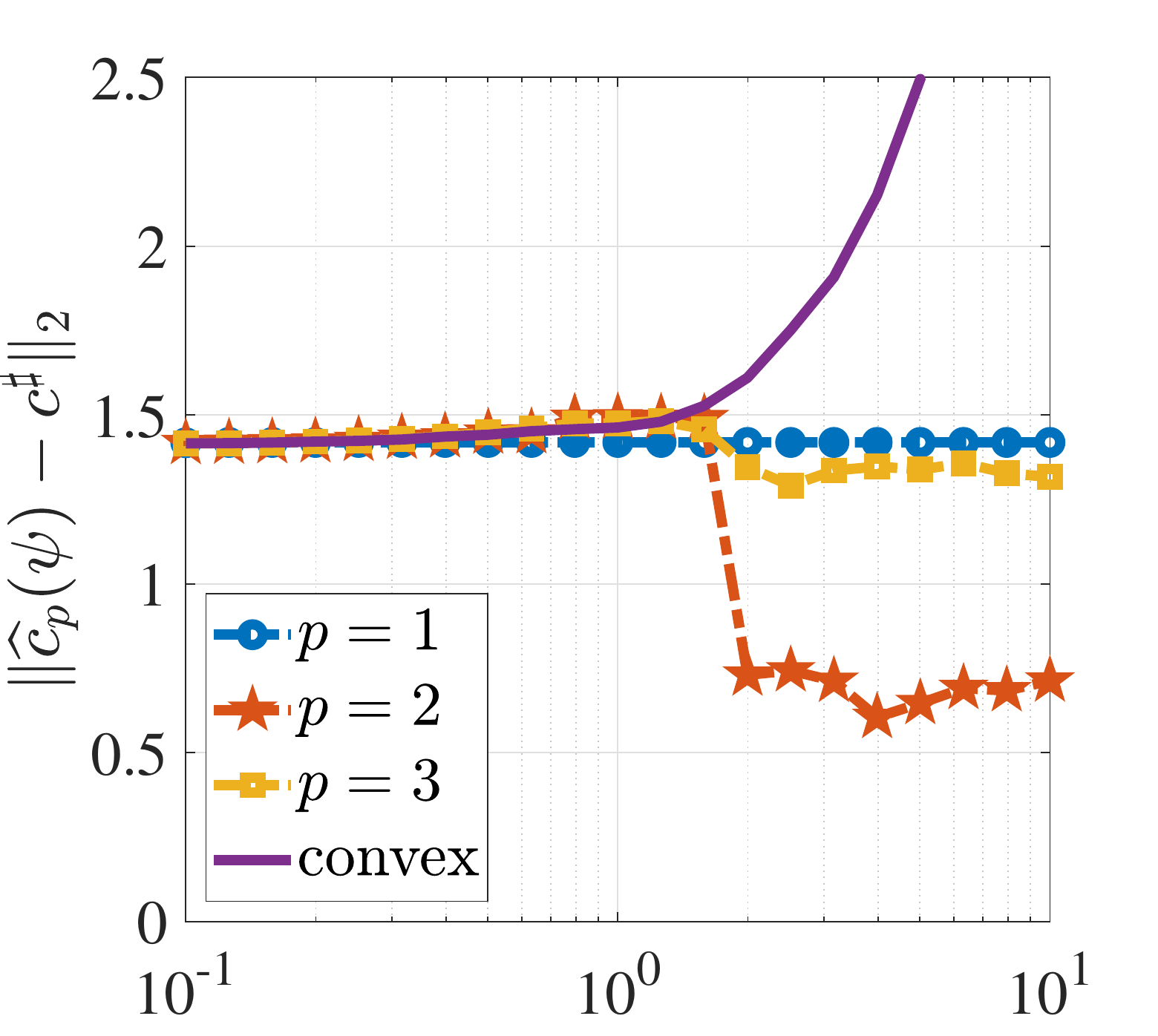}\label{fig:reso_log22}}
    \subfloat[$\sigma_e = 0.1$]{\includegraphics[scale=0.36, clip]{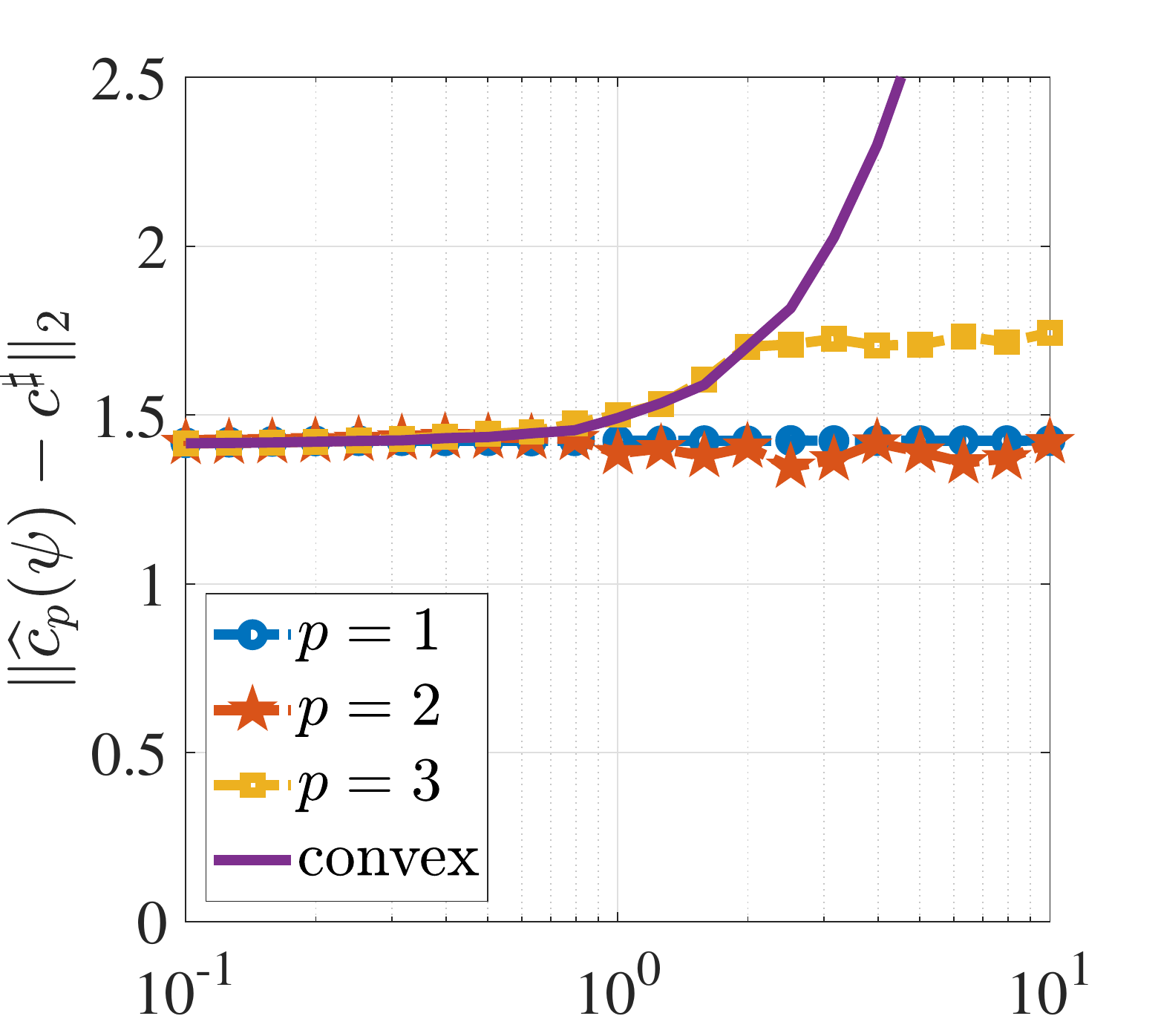}\label{fig:reso_log23}}
    \caption{\rev{Super-resolution with the sparsity level $\|c^\n\|_0 = 2$: performance of the nonconvex machine~\eqref{eq:mainNonCvx} for $p \in \{1,2,3\}$ versus the convex counterpart~\eqref{eq:mainCvx} (or equivalently~\eqref{eq:mainNonCvx} for $p = |\A| = 20$)}}
    \label{fig:numerics}
\end{figure}

\section*{Acknowledgements} 
The authors are grateful to Gongguo Tang, Michael Wakin and Konstantinos Zygalakis for helpful discussions and their valuable feedback. Peyman Mohajerin Esfahani acknowledges the support of the ERC
grant TRUST-949796.



\appendix

\section{Technical Details of Section~\ref{sec:old}}

\subsection{Group Sparsity}\label{sec:failure}

This section presents a third toy example for which the classical gauge function theory fails. {Here, the objective is to decompose the model $x^\n$ into a small number of vectors with known supports~\cite{huang2010benefit,jacob2009group}. To be concrete,} for a {factor} $C>0$ and a collection of index sets $\Omega\subset 2^{[d]}$,  the model considered in group sparsity~\cite{bach2012optimization} is 
    \begin{align}
        & x^\n \in \cone(\slice^\n), \qquad \slice^\n \in \Slice_r(\A), \nonumber\\
        & \A := \{ u: \|u\|_2 =1, \, \|u\|_\infty \le C ,\,  \text{supp}(u) \in \Omega \} \subset \R^d,
        \label{eq:groupAlphabet}
    \end{align}
    where the above bound on $\ell_\infty$-norm ensures that the atoms are diffuse on their supports. Recall that the set $\mathrm{supp}(u) \subset [d]$ denotes the support of $u$, i.e.,  the index set over which $u$ is nonzero.

\begin{example}[{\sc Group sparsity}]\label{ex:groupSparse}

For the last failed application of the gauge function theory in this section, let  us revisit group sparsity, introduced earlier in this section. As an example of the model~\eqref{eq:groupAlphabet} with $d=3$, consider the collection of index sets  
\begin{align}
    \Omega := \l\{ \{1\},\{2\},\{3\},\{1,2\}, \{2,3\} \r\} \subset 2^{[3]},
    \label{eq:exampleGroupSparse}
\end{align}
and the alphabet
\begin{align}
    \A := \l\{ u : \|u\|_2 =1,\, \|u\|_\infty \le \|u\|_0^{-\frac{1}{3}},\, \supp(u) \in \Omega  \r\},
    \label{eq:groupSparseAlphEx}
\end{align}
where the bound on $\ell_\infty$-norm above ensures that the atoms are diffuse on their support. 
With this alphabet, consider the model 
\begin{align}
    x^\n := \frac{A_1^\n}{2} +\frac{A_2^\n}{2} = \l[
    \begin{array}{ccc}
        \frac{1}{2\sqrt{2}} & \frac{1}{2\sqrt{2}}-\frac{\sqrt{7}}{8}  
         & \frac{3}{8}
    \end{array}
    \r]^\top
    ,
    \label{eq:counterExGroup}
\end{align}
where the atoms $\{A_1^\n,A_2^\n\}\subset \A$ are specified as
\begin{align}
    A_1^\n   := \l[ 
    \begin{array}{lll}
        \frac{1}{\sqrt{2}} & \frac{1}{\sqrt{2}} & 0
    \end{array}
    \r]^\top, \qquad
    A_2^\n := \l[ 
    \begin{array}{lll}
        0 & -\frac{\sqrt{7}}{4} & \frac{3}{4}
    \end{array}
    \r]^\top.
    \label{eq:atomsGroupSparsityEx}
\end{align}
Evidently, the model~$x^\n$ in~\eqref{eq:counterExGroup} has the alternative decomposition
\begin{align}
    x^\n & = \frac{A_1}{2\sqrt{2}} + \l(\frac{1}{2\sqrt{2}} - \frac{\sqrt{7}}{8}\r) A_2+
     \frac{3}{8}  A_3,
    \label{eq:counterExGroup2}
\end{align}
where $\{A_i\}_{i=1}^3 \subset \A$ are the {three} canonical vectors in $\R^3$. By comparing the two alternative representations of $x^\n$ in~(\ref{eq:counterExGroup}) and~(\ref{eq:counterExGroup2}), we find that 
\begin{align}
    \gauge_\A(x^\n) \le \min\l(\frac{1}{2}+\frac{1}{2}, \frac{1}{2\sqrt{2}}+ \l|\frac{1}{2\sqrt{2}}- \frac{\sqrt{7}}{8}\r| + \frac{3}{8}\r)=0.7514<1, 
     \qquad \text{(see \eqref{eq:gaugeFcn1})}
    \label{eq:exGroupSparseGauge}
\end{align}
and thus the $2$-sparse decomposition in~\eqref{eq:counterExGroup} is not minimal. In fact, it is not difficult to verify that the machine~\eqref{eq:cvxFacNoiseFre} fails to find any $2$-sparse decomposition for $x^\n$.
\end{example}

\subsection{Proof of Lemma \ref{lem:duality}}

Let $\wh{x}$ be a minimizer of problem~\eqref{eq:mainCvx}. 
Suppose that $\gauge_\A(x^\n)=0$.  By feasibility of $\wh{x}$ for problem~\eqref{eq:mainCvx}, it holds that $\gauge_\A(\wh{x})\le \gauge_\A(x^\n) = 0$. 
By Assumption~\ref{a:alph}\ref{assumption:alphabet0}, $\A$ is symmetric and $\gauge_\A$ is thus a norm in $\R^d$.  Because $\gauge_\A$ is a norm, $\gauge_\A(\wh{x}) = \gauge_\A (x^\n)=0$ implies that  $\wh{x} = x^\n=0$.  We thus assume that $\gauge_\A(x^\n)> 0$ from now on. By definition of the gauge function in~\eqref{eq:gaugeFcn1}, we have
\begin{align*}
    \wh{x}/\gauge_\A (\wh{x}) \in \hull(\A),
\end{align*}
with the convention that $0/0=0$. Since $\A$ is symmetric by Assumption~\ref{a:alph}\ref{assumption:alphabet0}, it also holds that 
\begin{align}
    t \cdot \wh{x}/\gauge_\A (\wh{x}) \in \hull(\A), \qquad \forall t\in [-1,1],
            \label{eq:lineSeg}
\end{align}
i.e.,  the line segment connecting $\pm \wh{x}/\gauge_\A (\wh{x})$ also belongs to $\hull(\A)$. Moreover, by feasibility of $\wh{x}$ in problem~\eqref{eq:mainCvx}, we have that 
\begin{align*}
    \gauge_\A (\wh{x}) \le \gauge_\A (x^\n). 
\end{align*}
In view of the above relation, for the choice of $t=\gauge_\A(\wh{x})/\gauge_\A(x^\n)\in [0,1]$,~\eqref{eq:lineSeg} reduces to 
\begin{align}
    \wh{x}/\gauge_\A (x^\n) \in \hull(\A). \label{eq:scaledHatStrong}
\end{align}
For future reference, note also that the feasibility of $x^\n$ and optimality of $\wh{x}$ in problem~\eqref{eq:mainCvx}  imply that $\L(x^\n)=\L(\wh{x})=y$ and, consequently, 
\begin{align}
    \L\l( \frac{\wh{x}}{\gauge_\A(x^\n)} - \frac{x^\n}{\gauge_\A(x^\n)} \r) = \L(\wh{x} - x^\n ) = 0. 
    \qquad (\gauge_\A(x^\n)>0)
    \label{eq:feasCvx}
\end{align}
We now consider two cases:
\begin{enumerate}[leftmargin=*,wide]
\item Suppose that $\wh{x}/\gauge_\A(x^\n) \in \face^\n$. Then we find that 
\begin{align}
      \frac{\wh{x}}{\gauge_\A(x^\n)}-\frac{x^\n}{\gauge_\A(x^\n)} \in \face^\n-\face^\n \subset \lin(\face^\n). \label{eq:samePlane}
\end{align}
Consequently, \eqref{eq:feasCvx}, \eqref{eq:samePlane} and the injectivity of $\L$ on $\lin(\face^\n)$ together imply that $\wh{x} = x^\n$.

\item Suppose that $\wh{x}/\gauge_\A(x^\n) \notin \face^\n$. We can therefore strengthen~\eqref{eq:scaledHatStrong} as
\begin{align}
    \wh{x}/\gauge_\A(x^\n) \in \hull(\A) - \face^\n. \label{eq:scaledHatStrong2}
\end{align}
By assumption of Lemma~\ref{lem:duality}, there exists a certificate $Q= \L^*(q)$ that satisfies~\eqref{eq:dualCertCnds}. Recalling~\eqref{eq:feasCvx}, we then write that 
\begin{align*}
    0 & = \langle q, \L(\wh{x} - x^\n ) \rangle \qquad \text{(see \eqref{eq:feasCvx})}  \nonumber\\ & 
    = \langle \L^*(q), \wh{x}- x^\n  \rangle 
    = \langle Q, \wh{x} - x^\n \rangle 
    = \gauge_\A (x^\n) \l\langle Q, \frac{\wh{x}}{\gauge_\A (x^\n)} - \frac{x^\n}{\gauge_\A (x^\n)} \r\rangle \quad (\gauge_\A (x^\n) >0) \nonumber\\
    & = \l\langle Q, \frac{\wh{x} }{\gauge_\A (x^\n)} - \frac{x^\n}{\gauge_\A (x^\n)} \r\rangle<0 
\end{align*}
where above we used the assumption that $ \gauge(x^\n) >0$ as well as \eqref{eq:dualCertCnds} and \eqref{eq:scaledHatStrong2}. To avoid the above contradiction, it must hold that $\wh{x}/\gauge_\A(x^\n)\in \face^\n$ which again implies that $\wh{x} = x^\n$.
\end{enumerate}
This completes the proof of Lemma~\ref{lem:duality}.

\section{Technical Details of Section~\ref{sec:new}}
\subsection{Proof of Proposition \ref{prop:propsHullP}}\label{sec:caratheodory}
The nested property of $\{\hull_{p}(\A)\}_p$ in~\eqref{eq:nested} is evident from its definition in~\eqref{eq:far-right}. To show the far-left identity in~\eqref{eq:nested}, we use~\eqref{eq:far-right} for $r=1$ to write that 
\begin{align}
    \hull_1(\A) & = \bigcup_{A\in \A} \hull(\{A,0\}) 
    \qquad \text{(see \eqref{eq:far-right})} \nonumber\\
    & = \bigcup_{A\in \A} \bigcup_{0\le \tau \le 1}  \tau A = \bigcup_{0\le \tau \le 1} \bigcup_{A\in \A} \tau A = \bigcup_{0\le \tau \le 1} \tau \A.
\end{align}
To show the far-right identity in~\eqref{eq:nested}, recall that every point $\hull(\A)\subset \R^d$ can be expressed as a convex combination of at most $d+1$ atoms in the alphabet $\A$, by Carath\'eodory theorem~\cite{barvinok2002course}.  We now use both~\eqref{eq:far-right} and the Carath\'eodory theorem to write that 
\begin{align*}
    \hull_{d+1}(\A) 
    & = \bigcup_{\{A_i\}_{i=1}^{d+1} \subset \A} \hull(\{A_i\}_{i=1}^d\cup\{0\}) 
    = \bigcup_{\{A_i\}_{i=1}^{d+1} \subset \A} \bigcup_{0\le \tau \le 1} \tau\cdot  \hull(\{A_i\}_{i=1}^{d+1}) \nonumber\\ 
    & = \bigcup_{0\le \tau \le 1}  \bigcup_{\{A_i\}_{i=1}^{d+1} \subset \A} \tau \cdot \hull(\{A_i\}_{i=1}^{d+1}) 
    = \bigcup_{0\le \tau \le 1}  \tau \cdot \hull(\A) 
    = \hull(\A \cup \{0\}) 
    = \hull(\A),
\end{align*}
where the last line holds because $0\in \A$ by Assumption~\ref{a:alph}\ref{assumption:containsOrigin}. This establishes~\eqref{eq:nested} and  completes the proof of Proposition~\ref{prop:propsHullP}. 

\subsection{Proof of Proposition~\ref{prop:gauge2EquivDefn}}
To prove~\eqref{eq:gauge2}, we use the expression for $\hull_p(\A)$ in~\eqref{eq:far-right} to rewrite the definition of gauge$_p$ function in~\eqref{eq:gauge2Hull} as 
\begin{align*}
    \gauge_{\A,p}(x) 
    & = \inf\l\{ t: x/t \in \bigcup_{\slice\in \Slice_r(\A)} \slice ,\,  t\ge 0 \r\} 
        \qquad \text{(see (\ref{eq:far-right},\ref{eq:gauge2Hull}))} 
    \nonumber\\
    & =   
      \inf \l\{ t: x =  \sum_{i=1}^p c_i A_i,\,  \sum_{i=1}^p c_i \le t ,\,  c_i \ge 0, A_i \in \A, \,\forall i\in [p] \r\} 
    \nonumber\\
    & = \inf \l\{ \sum_{i=1}^p c_i : x =  \sum_{i=1}^p c_i A_i ,\, c_i \ge 0, A_i \in \A, \,\forall i\in [p] \r\},
\end{align*}
which proves~\eqref{eq:gauge2}. To show Proposition~\ref{prop:gauge2EquivDefn}\ref{prop:gauge_p:origin}, suppose that $\gauge_{\A,p}(x)=0$ which implies by definition in~\eqref{eq:gauge2Hull} that $x/t\in  \hull_p(\A)$ for every $t>0$.  Since the alphabet $\A$ and, consequently, $\hull_p(\A)$ in~\eqref{eq:far-right} are both bounded by Assumption~\ref{a:alph}\ref{assumption:bounded}, we conclude that $x=0$. 

To prove Proposition~\ref{prop:gauge2EquivDefn}\ref{prop:gauge_p:conjugate}, we begin by writing down the convex conjugate of $\gauge_{\A,p}$ as 
\begin{align*}
    \gauge_{\A,p}^*(z) & = \sup_{x}\,\, \langle x,z\rangle - \gauge_{\A,p}(x) 
    \nonumber\\
    & = \sup\l\{ \langle x, z\rangle - \sum_{i=1}^p c_i : x = \sum_{i=1}^p c_i A_i,\,  c_i \ge 0,\, A_i\in \A, \, \forall i\in [p] \r\} 
    \qquad \text{(see \eqref{eq:gauge2})}
    \nonumber\\
    & = \sup\l\{ \sum_{i=1}^p c_i (\langle A_i, z \rangle - 1 ):  c_i \ge 0,\, A_i\in \A, \, \forall i\in [p] \r\} 
    = \begin{cases}
    0 & \text{if } \sup_{A\in \A}\,\, \langle A,z\rangle \le 1\\
    \infty & \text{if } \sup_{A\in \A}\,\, \langle A,z\rangle > 1
    \end{cases} \nonumber\\
    & = 
    \ind_{\B(\dual_\A)}(z),
\end{align*}
where $\B(\dual_\A)$ is the unit ball for the dual norm of the gauge function, i.e.,  $\dual_\A$ in~\eqref{eq:dual}. It also immediately follows that 
\begin{align*}
    \gauge_{\A,p}^{**} & = (\ind_{\B(\dual_{\A} )} )^* = \gauge_{\A},
\end{align*}
which proves Proposition~\ref{prop:gauge2EquivDefn}\ref{prop:gauge_p:envelop}. Proposition~\ref{prop:gauge2EquivDefn}\ref{prop:gauge_p:infeasible} and~\ref{prop:gauge_p:sparse} trivially follow from the definition of the gauge$_p$ function in~\eqref{eq:gauge2}. Lastly, the nested property of the gauge$_p$ functions in~\eqref{eq:nestedGauges} follows immediately from~\eqref{eq:gauge2}. The identity on the far-right of~\eqref{eq:nestedGauges} follows by combining the far-right identity in~\eqref{eq:nested} with~\eqref{eq:gauge2Hull}. This completes the proof of Proposition~\ref{prop:gauge2EquivDefn}. 

\subsection{Proof of Lemma \ref{lem:nonCvxLearn}}
Let $\wh{x}$ be a minimizer of problem~\eqref{eq:mainNonCvx}. 
Suppose that $\gauge_{\A,p}(x^\n)=0$. 
By feasibility of $\wh{x}$ in problem~\eqref{eq:mainNonCvx}, it holds that $\gauge_{\A,p}(\wh{x}) \le \gauge_{\A,p}(x^\n)=0$ and, consequently, $\wh{x}=x^\n=0$ by  Proposition~\ref{prop:gauge2EquivDefn}\ref{prop:gauge_p:origin}. We thus assume that $\gauge_{\A,p}(x^\n)>0$ from now on. 

By definition of the gauge$_p$ function in~\eqref{eq:gauge2Hull}, it holds that 
\begin{align*}
    x^\n /\gauge_{\A,p}(x^\n) \in \hull_p(\A).
\end{align*}
Again by definition of the gauge$_p$ function and using also the definition of $\hull_p(\A)$ in~\eqref{eq:far-right}, there  exists a slice $\slice\in \Slice_p(\A)$ such that 
\begin{align}
    \wh{x}/\gauge_{\A,p}(\wh{x}) \in \slice \subset  \hull_p(\A).
    \label{eq:sliceHat}
\end{align}
Moreover, since the slice $\slice$ is a convex set containing the origin, see Definition~\ref{defn:slice}, it follows from~\eqref{eq:sliceHat} that  
\begin{align}
    t \cdot \wh{x}/\gauge_{\A,p}(\wh{x}) \in \slice , \qquad \forall t\in [0,1]. 
    \label{eq:lineSeg2}
\end{align}
By feasibility of $\wh{x}$ in problem~\eqref{eq:mainNonCvx}, we have that $\gauge_{\A,p}(\wh{x}) \le \gauge_{\A,p}(x^\n)$, and we can thus take $t=\gauge_{\A,p}(\wh{x})/\gauge_{\A,p}(x^\n) \in [0,1]$ in~\eqref{eq:lineSeg2} to find that 
\begin{align}
    \wh{x}/\gauge_{\A,p}(x^\n) \in \slice\,.
    \label{eq:usefulLater}
\end{align}
For future reference, note also that the feasibility of $x^\n$ and optimality of $\wh{x}$ in problem~\eqref{eq:mainCvx}  implies that $\L(x^\n)=\L(\wh{x})=y$ and, consequently, 
\begin{align}
    \L\l( \frac{\wh{x}}{\gauge_{\A,p}(x^\n)} - \frac{x^\n}{\gauge_{\A,p}(x^\n)} \r) = \L( \wh{x}-x^\n) = 0. \qquad ( \gauge_{\A,p}(x^\n) >0)
    \label{eq:feasNonCvx}
\end{align}
We can now proceed to the body of the proof by considering two cases:
\begin{enumerate}[leftmargin=*,wide]
    \item Suppose that  $x^\n/\gauge_{\A,p}(x^\n) \in \slice$. Then it follows from~\eqref{eq:sliceHat} that 
\begin{align}
      \frac{\wh{x}}{\gauge_{\A,p}(x^\n)}-\frac{x^\n}{\gauge_{\A,p}(x^\n)} \in \slice - \slice \subset \lin(\slice) . \qquad \text{(see \eqref{eq:sliceHat})}
    \label{eq:affHat}
\end{align}
Then,~(\ref{eq:feasNonCvx}) and~(\ref{eq:affHat}) together with the injectivity of $\L$ on $\lin(\slice)$  imply that $\wh{x} = x^\n$. 

\item  Suppose that  $x^\n/\gauge_{\A,p}(x^\n) \notin \slice$. 
It therefore exists a certificate $Q_{{\slice}}=\L^*(q_\slice)$ that satisfies~\eqref{eq:certNonCvx}. With this in mind and after recalling~\eqref{eq:feasNonCvx}, we   write that  
\begin{align*}
    0 & = \l\langle q_\slice, \L\l(\frac{\wh{x}}{\gauge_{\A,p}(x^\n)} -  \frac{x^\n }{\gauge_{\A,p}(x^\n)}  \r) \r\rangle  
    = \l\langle \L^*(q_\slice), \frac{\wh{x}}{\gauge_{\A,p}(x^\n)} -  \frac{x^\n }{\gauge_{\A,p}(x^\n)} \r\rangle 
    \nonumber\\ 
    & = \l\langle Q_\slice , \frac{\wh{x}}{\gauge_{\A,p}(x^\n)} -  \frac{x^\n }{\gauge_{\A,p}(x^\n)} \r\rangle < 0 
\end{align*}
which leads to a contradiction. We conclude that $x^\n/\gauge_{\A,p}(x^\n) \in \slice$  and, consequently, $\wh{x}=x^\n$. 
\end{enumerate}

This completes the proof of Lemma~\ref{lem:nonCvxLearn}.

\subsection{Example of \rev{an  Operator That Satisfies  the RIP}}\label{sec:near-iso}
Let $G\in \R^{m\times d}$ be a standard random Gaussian matrix, i.e.,  the entries of $G$ are independent Gaussian random variables with zero mean and unit variance. Consider a (linear) subspace $\subs \subset \R^d$ and let the $d\times \dim(\subs)$ matrix $U$ be an orthonormal basis for the span of this subspace. We  then write that 
\begin{align}
    \sup_{u\in \subs} \frac{\| G u\|_2}{\|u\|_2} = \sup_{v} \frac{\| G U v\|_2}{\|v\|_2} = \s_{\max}(GU) =: \s_{\max}(G'),
    \label{eq:maxSigmaMax}
\end{align}
where we set $G'=GU$ for short, and  $\s_{\max}(G')$ is the largest singular value of $G'$. 
Likewise, 
\begin{align*}
    \inf_{u\in \subs} \frac{\| G u\|_2}{\|u\|_2} = \inf_{v} \frac{\| G U v\|_2}{\|v\|_2} = \s_{\min}(GU) = \s_{\min}(G').
\end{align*}
Note that the $m\times \dim(\subs)$ matrix $G'=GU$ too is a standard random Gaussian matrix because $U^\top U=I_{\dim(\subs)}$ by construction. The largest and smallest singular values of a standard random Gaussian matrix are well-known~\cite[Corollary 5.35]{vershynin2010introduction}. In particular,  it holds that 
\begin{align}
    (1-\d)\sqrt{m}\le \s_{\min}(G') \le \s_{\max}(G') \le (1+\d) \sqrt{m},
    \label{eq:CoMGaussian}
\end{align}
provided that $m\ge C \dim(\subs)/\d^2$ and except with a probability of at most $\exp(-C'\d^2 m)$. \rev{Here,  $C,C'$ are universal constants.}  By combining~(\ref{eq:maxSigmaMax}) and~(\ref{eq:CoMGaussian}) for the linear operator $\L=G/\sqrt{m}$, we \rev{finally} arrive at
\begin{align*}
    (1-\d)\|u\|_2 \le \| \L(u)\|_2 \le (1+\d) \|u\|_2,\qquad \forall u\in \subs,
\end{align*}
provided that $m\ge C \dim(\subs)/\d^2$ and except with a probability of at most $\exp(-C'\d^2 m)$. The random linear operator $\L$ constructed above thus \rev{satisfies the probabilistic $\d$-RIP.}

\subsection{Proof of Proposition \ref{prop:criticalSpecial}}
Since $p\ge d+1$ by assumption, recall from~\eqref{eq:nestedGauges} that $\gauge_{\A,p}(x^\n)=\gauge_\A(x^\n)$. In particular, $x^\n/\gauge_{\A,p}(x^\n) = x^\n / \gauge_\A(x^\n) =: \wt{x}$. 
For a slice $\slice\in \Slice_{p}(\A)$, note that 
\begin{align}
    \slice& \subset \hull_p(\A) \qquad \text{(see \eqref{eq:far-right})} \nonumber\\
    & = \hull(\A),\qquad \text{(see \eqref{eq:nested})}
    \label{eq:obviousInclusion}
\end{align}
where the identity above holds by~\eqref{eq:nested} and because $p\ge d+1$. Let us assume that $\wt{x}\notin \slice$.  An  immediate implication of~\eqref{eq:obviousInclusion} is that  $\slice - \wt{x} \subset \hull(\A)- \wt{x}$ and, consequently, 
\begin{align*}
\cone(\slice - \wt{x}) \subset \cone(\hull(\A)-\wt{x}) = \cone( \hull( \A - \wt{x} ) ) = \cone(\A - \wt{x}),
\end{align*}
and, in turn, 
$\angle \cone(\slice - \wt{x}) \le 2 \angle \cone(\A - \wt{x}) $, where we invoked Lemma~\ref{lem:inclusion} below to obtain the last inequality. Using this last inequality and \eqref{eq:criticalAngle}, we arrive at
\begin{align*}
    \theta_{x^\n,p}(\A) & = \sup\l\{  \angle \cone\l(\slice - \wt{x} \r): \slice \in \Slice_{x^\n,p}(\A) \r\} 
    \le 2\angle \cone\l( \A - \wt{x} \r),
\end{align*}
which completes the proof of Proposition~\ref{prop:criticalSpecial}. 

To prove Lemma~\ref{lem:inclusion} below, in addition to~\eqref{eq:coneAngleDefn}, we first introduce two other notions of angle for a closed cone $\K\subset \R^d$, i.e., 
\begin{align}
\cos (\phi(\K))& := \max_{u\in \S^{d-1}} \min_{u'\in \K\cap \S^{d-1}} \,\, \langle u, u'\rangle,     \nonumber\\
\cos (\psi(\K)) & := \min_{u\in \K\cap \S^{d-1}} \min_{u'\in \K\cap \S^{d-1}} \,\, \langle u, u'\rangle. 
\label{eq:diffAngles}
\end{align}
These quantities are related as follows.
\begin{lem}\label{lem:relationAngles}
    For a closed cone $\K$, it holds that 
    \begin{align}
        \frac{\psi(\K)}{2} \le \phi(\K) \le \angle \K \le \psi(\K).
        \label{eq:relationAngles}
    \end{align}
\end{lem}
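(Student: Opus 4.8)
The plan is to prove the three inequalities in~\eqref{eq:relationAngles} separately, relying throughout on two elementary facts: first, that $\cos$ is a strictly decreasing bijection from $[0,\pi]$ onto $[-1,1]$, so that each displayed inequality among angles is equivalent to the reversed inequality among the corresponding cosines in~\eqref{eq:coneAngleDefn} and~\eqref{eq:diffAngles}; and second, that the geodesic (angular) distance $\angle(u,u') := \arccos\langle u,u'\rangle$ on $\S^{d-1}$ is a genuine metric, in particular it obeys the triangle inequality. I would also record at the outset that, since $\K$ is a closed cone (and, implicitly, $\K \neq \{0\}$), the set $\K \cap \S^{d-1}$ is nonempty and compact, and the map $w \mapsto \min_{u'\in\K\cap\S^{d-1}}\langle w,u'\rangle$ is $1$-Lipschitz on $\S^{d-1}$; hence all the extrema in~\eqref{eq:coneAngleDefn} and~\eqref{eq:diffAngles} are attained.

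For the rightmost inequality $\angle\K \le \psi(\K)$, I would set $f(u):=\min_{u'\in\K\cap\S^{d-1}}\langle u,u'\rangle$ and simply observe that $\cos(\angle\K) = \max_{u\in\K\cap\S^{d-1}} f(u) \ge \min_{u\in\K\cap\S^{d-1}} f(u) = \cos(\psi(\K))$, the maximum of a function over a nonempty set dominating its minimum. For the middle inequality $\phi(\K) \le \angle\K$, I would reuse $f$, now as a function on all of $\S^{d-1}$, and note that enlarging the feasible set of the outer maximization can only increase it: since $\K\cap\S^{d-1}\subseteq\S^{d-1}$, we get $\cos(\phi(\K)) = \max_{w\in\S^{d-1}} f(w) \ge \max_{w\in\K\cap\S^{d-1}} f(w) = \cos(\angle\K)$. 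Both steps then conclude by the monotonicity of $\cos$ on $[0,\pi]$.

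The only step with genuine content is the leftmost inequality $\psi(\K)/2 \le \phi(\K)$. Here I would let $w^\star\in\S^{d-1}$ attain the maximum defining $\cos(\phi(\K))$, so that $\langle w^\star,u'\rangle \ge \cos(\phi(\K))$, equivalently $\angle(w^\star,u') \le \phi(\K)$, for every $u'\in\K\cap\S^{d-1}$. Then for any two points $a,b\in\K\cap\S^{d-1}$ the triangle inequality on the sphere gives $\angle(a,b) \le \angle(a,w^\star) + \angle(w^\star,b) \le 2\phi(\K)$; taking the supremum over $a,b$ and recalling that the identity $\cos(\psi(\K)) = \min_{a,b\in\K\cap\S^{d-1}}\langle a,b\rangle$ means precisely $\psi(\K) = \max_{a,b\in\K\cap\S^{d-1}}\angle(a,b)$ (again by injectivity of $\cos$ on $[0,\pi]$), we obtain $\psi(\K) \le 2\phi(\K)$. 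I do not expect a serious obstacle; the only points needing minor care are the existence of the maximizer $w^\star$ (compactness and continuity, as noted above), the degenerate case $\K=\{0\}$ which should be excluded by convention, and checking that the argument is insensitive to whether $\phi(\K)$ or $\psi(\K)$ exceeds $\pi/2$ — which it is, since all angles lie in $[0,\pi]$ by definition, so that $2\phi(\K)\ge\pi\ge\psi(\K)$ trivially whenever $\phi(\K)\ge\pi/2$.
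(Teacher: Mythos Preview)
Your proposal is correct and follows essentially the same approach as the paper: the two rightmost inequalities are obtained exactly as you do (by comparing min versus max, and by enlarging the outer feasible set), and for $\psi(\K)\le 2\phi(\K)$ the paper simply states that $\phi(\K)$ is the angular radius of the smallest spherical cap containing $\K\cap\S^{d-1}$ while $\psi(\K)$ is the largest pairwise angle, which is precisely the triangle-inequality argument you spell out explicitly.
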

\begin{proof}
    Note that 
    \begin{align*}
        \cos(\psi(\K)) & = \min_{u\in \K\cap \S^{d-1}} \min_{u'\in \K \cap \S^{d-1}} \langle u,u'\rangle \qquad \text{(see \eqref{eq:diffAngles})} \nonumber\\
        & \le \max_{u\in \K\cap \S^{d-1}} \min_{u'\in \K \cap \S^{d-1}} \langle u, u'\rangle = \cos(\angle \K)
        \qquad \text{(see \eqref{eq:coneAngleDefn})}
        \nonumber\\
        & \le \max_{u\in \S^{d-1}} \min_{u'\in \K \cap \S^{d-1}} \langle u, u'\rangle 
        = \cos (\phi(\K)),
    \end{align*}
    which establishes the last two inequalities in~\eqref{eq:relationAngles}. On the other hand, note that $\phi(\K)$ is the angle of the smallest spherical cap that contains $\K\cap \S^{d-1}$, whereas  $\psi(\K)$ is the largest pairwise angle in~$\K$. The two quantities are thus related as 
    \begin{align*} 
        \psi(\K) \le 2\phi(\K),
    \end{align*}
    which proves the remaining inequality in~\eqref{eq:relationAngles}, and completes the proof of Lemma~\ref{lem:relationAngles}. 
\end{proof}

We next prove a weak inclusion result for cones.
\begin{lem}\label{lem:inclusion}
    For closed cones $\K_1 \subset \K_2$, it holds that $\angle \K_1 \le 2\angle \K_2$.
\end{lem}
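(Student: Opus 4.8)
The plan is to chain together the three inequalities of Lemma~\ref{lem:relationAngles} with a monotonicity observation about the angle $\phi$ under cone inclusion. First I would recall from~\eqref{eq:relationAngles} that for any closed cone $\K$ one has $\phi(\K)\le \angle\K\le\psi(\K)\le 2\phi(\K)$. The key new input is the following monotonicity: if $\K_1\subset\K_2$, then $\phi(\K_1)\le\phi(\K_2)$. This is because $\phi(\K)$ (see~\eqref{eq:diffAngles}) is the angle of the smallest spherical cap containing $\K\cap\S^{d-1}$; enlarging the cone from $\K_1$ to $\K_2$ can only enlarge (or leave unchanged) the set $\K\cap\S^{d-1}$ that must be covered, hence the smallest covering cap can only grow, so $\cos(\phi(\K_1))\ge\cos(\phi(\K_2))$, i.e. $\phi(\K_1)\le\phi(\K_2)$.

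Granting this, the argument is a two-line computation:
\begin{align*}
    \angle\K_1 \;\le\; \psi(\K_1) \;\le\; 2\phi(\K_1) \;\le\; 2\phi(\K_2) \;\le\; 2\angle\K_2,
\end{align*}
where the first and second inequalities are from Lemma~\ref{lem:relationAngles} applied to $\K_1$, the third is the monotonicity of $\phi$ under the inclusion $\K_1\subset\K_2$, and the last is again from Lemma~\ref{lem:relationAngles} applied to $\K_2$. This establishes $\angle\K_1\le 2\angle\K_2$ and completes the proof.

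The only genuinely non-routine step is justifying $\phi(\K_1)\le\phi(\K_2)$ cleanly; I would phrase it directly from the definition~\eqref{eq:diffAngles} rather than via the geometric picture. Concretely, let $u_2^\star$ attain $\cos(\phi(\K_2))=\max_{u\in\S^{d-1}}\min_{u'\in\K_2\cap\S^{d-1}}\langle u,u'\rangle$. Then for this fixed $u_2^\star$,
\begin{align*}
    \cos(\phi(\K_1)) \;\ge\; \min_{u'\in\K_1\cap\S^{d-1}}\langle u_2^\star,u'\rangle \;\ge\; \min_{u'\in\K_2\cap\S^{d-1}}\langle u_2^\star,u'\rangle \;=\; \cos(\phi(\K_2)),
\end{align*}
the first inequality because $\phi(\K_1)$ is defined via a maximum over all $u\in\S^{d-1}$ (so plugging in $u_2^\star$ gives a lower bound), and the second because $\K_1\cap\S^{d-1}\subset\K_2\cap\S^{d-1}$ means the minimum over the smaller set is at least the minimum over the larger set. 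Since $\cos$ is decreasing on $[0,\pi]$, this yields $\phi(\K_1)\le\phi(\K_2)$, and the rest follows as above.
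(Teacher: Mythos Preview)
Your proof is correct and follows essentially the same strategy as the paper: sandwich via Lemma~\ref{lem:relationAngles} and exploit monotonicity of one of the auxiliary angles under cone inclusion. The only difference is which auxiliary angle you pick: the paper uses the monotonicity of $\psi$, writing
\[
\angle\K_1 \;\le\; \psi(\K_1) \;\le\; \psi(\K_2) \;\le\; 2\angle\K_2,
\]
where $\psi(\K_1)\le\psi(\K_2)$ is immediate from the min--min structure of~\eqref{eq:diffAngles}. You instead route through $\phi$, proving $\phi(\K_1)\le\phi(\K_2)$ from the max--min structure. Both monotonicity arguments are equally short and the resulting chains are equivalent; the paper's version is marginally more direct (three inequalities rather than four), but there is no substantive difference.
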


\begin{proof}
    Note that by \eqref{eq:diffAngles} and \eqref{eq:relationAngles} we have
    \begin{align*}
        \cos(\angle \K_1) & \ge \cos(\psi(\K_1)) 
        = \min_{u\in \K_1\cap \S^{d-1}} \min_{u'\in \K_1 \cap \S^{d-1}} \langle u,u'\rangle \nonumber\\
        &  \ge \min_{u\in \K_2\cap \S^{d-1}} \min_{u'\in \K_2 \cap \S^{d-1}} \langle u,u'\rangle 
        = \cos(\psi(\K_2)) 
        \ge \cos(2 \angle \K_2), 
    \end{align*}
    which completes the proof of Lemma~\ref{lem:inclusion}.
\end{proof}

In general, Lemma~\ref{lem:inclusion} cannot be improved. For example, consider the example $\K_1=\cone(\{e_1,e_2\})$ and $\K_2=\cone(\{e_1,e_2,e_1+e_2\})$. It is easy to verify that $\angle \K_1 = \pi/2$ whereas $\angle \K_2 = \pi/4$, and thus the inequality in Lemma~\ref{lem:inclusion} holds with equality. However, we strongly suspect that Lemma~\ref{lem:inclusion} could be improved to $\angle \K_1\le \angle \K_2$ when $\K_1$ is a closed  convex cone.

\subsection{Proof of Theorem \ref{thm:constructOpt} }\label{sec:recepieProof}

\begin{figure}
    \centering
    \includegraphics[width=.3\textwidth]{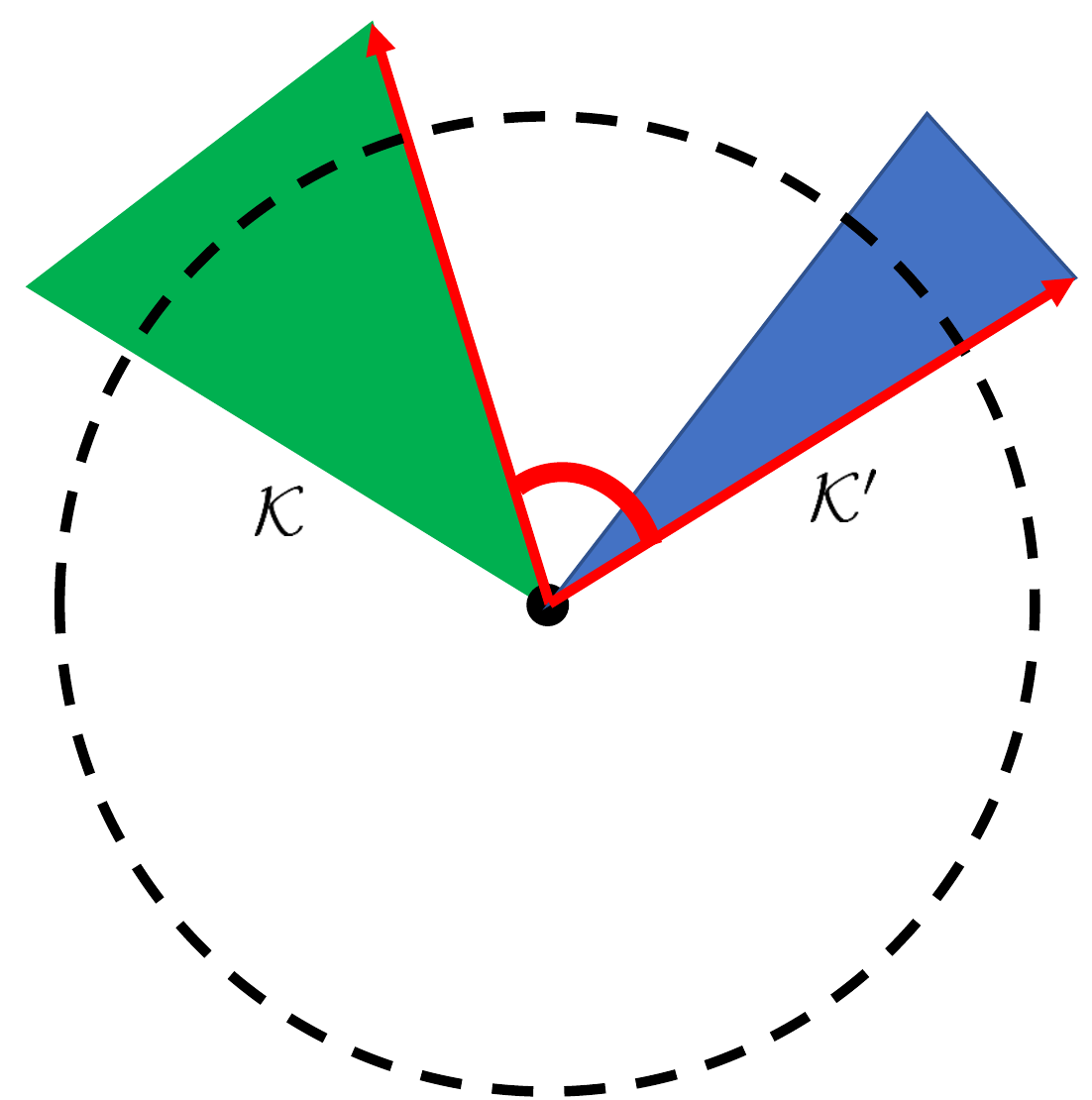}
    \caption{The angle between the green and blue cones equals the angle formed by the red arrows, see Definition~\ref{defn:angleCone}. }
    \label{fig:angleCones}
\end{figure}

Introducing the concept of angle between two cones is beneficial for this proof.
\begin{defn}[{\sc Angle between two cones}]\label{defn:angleCone} The angle between two closed cones $\K,\K'\subset \R^d$, \rev{denoted by $\angle [\K,\K'] \in [0,\pi]$, satisfies} 
\begin{align}
    \cos(\angle [\K,\K'] ) & := \min\l( \min_{u\in \K\cap \S^{d-1} } \max_{u'\in \K'\cap \S^{d-1}} \langle u,u'\rangle  ,  
    \min_{u'\in \K'\cap \S^{d-1} } \max_{u\in \K\cap \S^{d-1}} \langle u, u'\rangle 
    \r) \nonumber\\
    & = 1-\frac{1}{2} \l( \dist_\H (\K\cap \S^{d-1}, \K' \cap \S^{d-1}) \r)^2,
    \label{eq:angleConesDefn}
\end{align}
where $\dist_\H$ denotes the (Euclidean) Hausdorff distance between two sets~\cite{rockafellar2009variational}. 
\end{defn}


In words, the angle between two {closed} cones is the Hausdorff distance of their intersections with the unit sphere. For example,  Figure~\ref{fig:angleCones} \rev{shows sections of a blue cone and a green cone in $\R^2$.} The angle  between the blue and green cones equals the angle formed by the red arrows \rev{in the figure.}

When $\K$ and $\K'$ are two \rev{(linear)} subspaces of $\R^d$~\cite{golub2012matrix}, $\angle[\K,\K'] $ coincides with the (largest) principal angle between the two subspaces. 
Throughout the proof, we {will} frequently use the shorthand
\begin{align}
    \wt{x} := \frac{x^\n}{\gauge_{\A,p}(x^\n)},
    \qquad 
    u_x & := \frac{x-\wt{x}}{\|x-\wt{x}\|_2},
    \label{eq:shorthand}
\end{align}
for $x\ne \wt{x}$. As in Lemma~\ref{lem:nonCvxLearn}, we assumed above without loss of generality that $\gauge_{\A,p}(x^\n)>0$. The proof of Theorem~\ref{thm:constructOpt} relies on the following technical result, which is  similar to~\cite[Lemma 2.1]{candes2008restricted}. 
\begin{lem}\label{lem:fixedSliceIso} For $\d'\in [0,1)$, suppose that the random linear operator $\L$ \rev{satisfies the probabilistic $\delta'$-RIP}. Then,  for a slice $\slice'\in \Slice_p(\A)$,  it holds that
\begin{align}
    \frac{\langle Q_{\slice'}, u_{x'} \rangle}{\|x_{\slice'} - \wt{x}\|_2} \le -  \cos(\angle  \cone (\slice' - \wt{x}) ) + {\delta'}, \qquad \forall x'\in \slice',
    \label{eq:bndQSprime}
\end{align}
provided that $m\ge C p/\d'^2$, and except with a probability of $\exp(-C'\d'^2m)$. {Above, we set}
\begin{align}
 Q_{\slice'} := \L^*(\L(\wt{x}-x_{\slice'})),
    \label{eq:defnQSFixed}
\end{align}
and $x_{\slice'}$ is selected such that
\begin{align}
    \cos (\angle \cone(\slice' - \wt{x}) ) = \min_{x'\in \slice'} \,\, \l\langle u_{x_{\slice'}}, u_{x'} \r\rangle . 
    \label{eq:achievesConeAngle}
\end{align}
\end{lem}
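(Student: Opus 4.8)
The plan is to reduce \eqref{eq:bndQSprime} to a single application of the probabilistic $\delta'$-RIP on a carefully chosen low-dimensional subspace, followed by a textbook polarization estimate, in the spirit of \cite[Lemma~2.1]{candes2008restricted}. First I would fix the slice $\slice'$, say formed by the atoms $\{A_i\}$ (at most $p$ of them), and introduce the subspace $\mathcal{U}:=\lin(\{A_i\}\cup\{\wt{x}\})\subset\R^d$, whose dimension is at most $p+1$. The key observation is that every vector to which $\L$ is actually applied in \eqref{eq:bndQSprime} lies in $\mathcal{U}$: indeed $\wt{x}-x_{\slice'}\in\mathcal{U}$ because $x_{\slice'}\in\slice'\subset\lin(\{A_i\})\subset\mathcal{U}$, and likewise $u_{x'}=(x'-\wt{x})/\|x'-\wt{x}\|_2\in\mathcal{U}$ for every $x'\in\slice'$. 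Hence, provided $m\ge Cp/\delta'^2$ for a suitable universal constant $C$ (so that $\dim\mathcal{U}\le p+1$ fits under the dimension threshold in Definition~\ref{def:isometry} after a harmless rescaling of the RIP parameter by a universal factor), the probabilistic $\delta'$-RIP guarantees that, except on an event of probability at most $\exp(-C'\delta'^2 m)$, one has $(1-\delta')\|u\|_2\le\|\L(u)\|_2\le(1+\delta')\|u\|_2$ for all $u\in\mathcal{U}$. Since this is a single good event valid simultaneously for all $x'\in\slice'$, no union bound is needed within this lemma.

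On this event I would apply the polarization identity $\langle\L(u),\L(v)\rangle=\tfrac14\big(\|\L(u+v)\|_2^2-\|\L(u-v)\|_2^2\big)$ to unit vectors $u,v\in\mathcal{U}$, bound the two squared norms using the near-isometry, and expand $\|u\pm v\|_2^2=2\pm2\langle u,v\rangle$, to conclude $\langle\L(u),\L(v)\rangle\ge\langle u,v\rangle-\delta'$ (the $O(\delta'^2)$ remainder is absorbed into $\delta'$ by adjusting the universal constants, and the argument uses $\langle u,v\rangle\ge 0$, which holds in the regime of interest since the critical angle is below $\pi/2$). Applying this with $u=u_{x_{\slice'}}$ and $v=u_{x'}$, and using $\wt{x}-x_{\slice'}=-\|x_{\slice'}-\wt{x}\|_2\,u_{x_{\slice'}}$ together with the definition \eqref{eq:defnQSFixed} of $Q_{\slice'}$, yields
\[
\frac{\langle Q_{\slice'},u_{x'}\rangle}{\|x_{\slice'}-\wt{x}\|_2}
=\frac{\langle\L(\wt{x}-x_{\slice'}),\L(u_{x'})\rangle}{\|x_{\slice'}-\wt{x}\|_2}
=-\langle\L(u_{x_{\slice'}}),\L(u_{x'})\rangle
\le-\langle u_{x_{\slice'}},u_{x'}\rangle+\delta'.
\]
Finally, the defining property \eqref{eq:achievesConeAngle} of $x_{\slice'}$ gives $\langle u_{x_{\slice'}},u_{x'}\rangle\ge\min_{x''\in\slice'}\langle u_{x_{\slice'}},u_{x''}\rangle=\cos(\angle\cone(\slice'-\wt{x}))$ for every $x'\in\slice'$, which combined with the last display is precisely \eqref{eq:bndQSprime}. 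The degenerate case $x_{\slice'}=\wt{x}$ (i.e.\ $\wt{x}\in\slice'$) is handled by the convention $0/0=0$ and lies outside the case of interest anyway.

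The routine computations — the polarization expansion and the bookkeeping of universal constants — are not the hard part. The only genuine design choice is the subspace $\mathcal{U}$: it must contain $\wt{x}$ together with the linear span of $\slice'$ so that both $\wt{x}-x_{\slice'}$ and all directions $u_{x'}$ live in it, while retaining dimension $O(p)$ so that the probabilistic RIP is applicable. Once $\mathcal{U}$ is fixed, everything else is a deterministic consequence of the near-isometry of $\L$ on $\mathcal{U}$, and the required uniformity over $x'\in\slice'$ is automatic because the RIP controls all of $\mathcal{U}$ at once. I therefore do not anticipate any substantive obstacle; the main point requiring care is simply keeping the estimate uniform on one event and tracking the constants through the rescaling.
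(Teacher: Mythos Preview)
Your proposal is correct and follows essentially the same route as the paper: rewrite $\langle Q_{\slice'},u_{x'}\rangle/\|x_{\slice'}-\wt{x}\|_2$ as $-\langle \L(u_{x_{\slice'}}),\L(u_{x'})\rangle$, apply the polarization identity, control both terms via the near-isometry on the $(p{+}1)$-dimensional subspace, and finish with the defining property \eqref{eq:achievesConeAngle} of $x_{\slice'}$. The only cosmetic difference is that the paper bounds the $\|\L(u_{x_{\slice'}}-u_{x'})\|_2^2$ term by $\|\L\|_{\op}$ rather than the RIP upper bound, which leaves a stray $\max(\delta',\|\L\|_{\op}-1)$ in its display; your version, using both sides of the RIP on $\mathcal{U}$, is cleaner and actually matches the lemma's stated $\delta'$ (up to the universal-constant absorption you note).
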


Before proving Lemma~\ref{lem:fixedSliceIso} in {the next appendix},
{let us} first complete the proof of Theorem~\ref{thm:constructOpt}. Recall that $\Slice_{p}(\A)$ in~\eqref{eq:far-right} denotes the set of all slices of $\hull(\A)$ formed by at most $p$ atoms. For a resolution $\d>0$, let $\cover( \Slice_{p}(\A),\dist, \delta )$ denote a minimal $\delta$-net for $\Slice_{p}(\A)$ with respect to the pseudo-metric $\dist$, specified as 
\begin{align}
    \dist_p(\slice,\slice') & := \sqrt{2  - 2 \cos( \angle [ \cone(\slice - \wt{x}), \cone(\slice'-\wt{x})  ]) },
    \qquad \forall \slice,\slice'\in \Slice_{p}(\A). 
    \label{eq:pMetric}
\end{align}
Indeed, $\dist$ above is a pseudo-metric {because} it coincides with the Hausdorff distance between the intersection of the cones on the right-hand side above and the unit sphere in $\R^d$, see~\eqref{eq:angleConesDefn}. \rev{For $\d'\in [0,1)$, suppose that the random linear operator $\L$ satisfies the probabilistic $\d'$-RIP, see~\eqref{eq:isoDefn}. Consequently, by applying the union bound to all slices in $\cover( \Slice_{p}(\A),\dist, \delta )$}, we find that
\begin{align}
    & (1-\d')\|u\|_2 \le \| \L(u)\|_2 \le (1+\d') \|u\|_2, \nonumber\\
    & \qquad \forall u\in \lin(\slice'-\wt{x}), \qquad \forall \slice' \in  \cover(\Slice_{p}(\A),\dist,\d),
    \label{eq:nearIsoProof}
\end{align}
provided that $m\ge Cp/\d'^2$ and except with a probability of at most $$ \exp(-C'\d'^2m + \entropy(\Slice_{p}(\A),\dist,\d) ),
$$
where we used the fact that $\cover( \Slice_{p}(\A),\dist, \delta )$ is a minimal net by construction. \rev{The size of this net is therefore $\exp(\entropy(\Slice_{p}(\A),\dist,\d))$ by Definition~\ref{defn:entropy}.}

Consider an arbitrary slice $\slice\in \Slice_{p}(\A)$.
By Definition~\ref{defn:entropy}, there exists another slice $\slice'\in \cover( \Slice_{p}(\A),\dist, \delta )$ such that $\dist_p(\slice,\slice') \le \delta$.
\rev{After recalling \eqref{eq:pMetric}, this observation leads to}
\begin{align}
    \cos(\angle[\cone(\slice - \wt{x}),\cone(\slice'-\wt{x})]) \ge 1- \frac{\d^2}{2}. \qquad \text{(see \eqref{eq:pMetric})}
    \label{eq:angleAdjacent}
\end{align}
Consider  also an arbitrary point $x\in \slice$ and the corresponding unit-norm vector $u_{x}$, see~\eqref{eq:shorthand}. By~\eqref{eq:angleAdjacent} and after recalling the definition of angle between cones in~\eqref{eq:angleConesDefn}, there exists a unit-norm vector $u'\in \cone(\slice' - \wt{x})$ such that 
\begin{align}
    \langle u_{x}, u' \rangle \ge 1- \frac{\d^2}{2},
    \label{eq:usedCoverHere}
\end{align}
which also immediately implies that 
\begin{align}
    \|u_{x}-u'\|_2^2 & = \|u_{x}\|_2^2+\|u'\|_2^2 - 2 \langle u_{x}, u' \rangle  \nonumber\\
    & \le  2 - 2 \l(1-\frac{\d^2}{2}\r) =  \d^2,
    \qquad \text{(see (\ref{eq:shorthand}). (\ref{eq:usedCoverHere}))}
    \label{eq:usedCoverAgain}
\end{align}
\rev{Alternatively, it is also easy to arrive at the above conclusion from the Hausdorff distance interpretation of $\dist_p$ that we discussed earlier.}  
We next distinguish  two cases:
\begin{enumerate}[leftmargin=*,wide]
    \item Suppose that $\wt{x} \in \slice$. 
    Without loss of generality, we may assume that the ray passing through~$\wt{x}$ belongs to the interior of $\cone(\slice)$. (Indeed, otherwise there exists a lower-dimensional slice, to the interior of which the ray passing through $\wt{x}$ {would} belong.) Consequently, {we can use the} definition of $\wt{x}$ in~\eqref{eq:shorthand} {to write that}
    \begin{align}
         \cone(\slice - \wt{x}) \cup - \cone(\slice - \wt{x})= \lin(\slice - \wt{x}).
        \label{eq:wlog}
    \end{align}
     Moreover, note that  
    \begin{align}
        \lin(\slice- \wt{x}) = \lin(\slice),
        \label{eq:coneToSubs}
    \end{align}
 {because} the definition of slice in Definition~\ref{defn:slice} \rev{implies that $0\in \slice$. On the other hand, note that} 
    \begin{align}
        \| \L(u_x)\|_2 & \ge  \|\L(u')\|_2 -  \| \L(u_x - u')\|_2
        \qquad \text{(triangle inequality)} \nonumber\\
        & \ge 1- \d' - \|\L\|_\op \cdot \|u_x -u'\|_2 
        \qquad \text{(see \eqref{eq:nearIsoProof})}
        \nonumber\\
        & = 1- \d' - \|\L\|_\op \cdot \d > 0,
        \qquad \text{(see \eqref{eq:usedCoverAgain})}
        \label{eq:injProved}
    \end{align}
    where the last line above holds if $\d' + \|\L\|_\op \cdot \d <1$.
    Since the choice of the point $x\in \slice$ in~\eqref{eq:injProved} was arbitrary,  we conclude that $\L$ is an injective operator when restricted to $\cone(\slice - \wt{x})$ and, consequently, also when restricted to $-\cone(\slice - \wt{x})$. In view of~\eqref{eq:wlog}, $\L$ is also injective when restricted to $\lin(\slice - \wt{x})$ and, by~\eqref{eq:coneToSubs},  when restricted to $\lin(\slice)$.
    \item Suppose that $\wt{x}\notin \slice$. Recall $Q_{\slice'}$ {from}~\eqref{eq:defnQSFixed} and note that 
\begin{align*}
    \frac{\langle Q_{\slice'}, u_{x} \rangle }{\|x_{\slice'} - \wt{x}\|_2} & = 
     \frac{\langle \L^*(\L(\wt{x}-x_{\slice'} )), u_{x} \rangle }{\|x_{\slice'} - \wt{x} \|_2} 
    \qquad \text{(see \eqref{eq:defnQSFixed})}
    \nonumber\\
    & = - \langle \L( u_{x_{\slice'}}), \L(u_{x}) \rangle 
    \qquad \text{(see \eqref{eq:shorthand})}
    \nonumber\\
    & = - \langle \L (u_{x_{\slice'}}), \L(u') \rangle + \langle \L(u_{x_{\slice'}}), \L( u'-u_{x}  ) \rangle \nonumber\\
    & =  \frac{\langle Q_{\slice'} , u' \rangle }{\|x_{\slice'} - \wt{x} \|_2} + \langle \L(u_{x_{\slice'}}), \L( u'- u_{x} ) \rangle \nonumber\\ 
    & \le   \frac{\langle Q_{\slice'} , u' \rangle }{\|x_{\slice'} - \wt{x} \|_2} + \|\L\|_\op^2 \cdot \|u_{x_{\slice'}}\|_2 \cdot \| u'-u_{x}\|_2  \nonumber\\
    & \le -\cos( \cone (\slice' - \wt{x}) ) + \delta'+  \|\L\|_\op^2  \d 
    \qquad \text{(see \eqref{eq:bndQSprime} and \eqref{eq:usedCoverAgain})}\nonumber\\
   & \le - \inf_{\slice'' \in \Slice_{x^\n,p}(\A) } \cos( \cone (\slice'' - \wt{x}) ) + \delta' +  \|\L\|_\op^2 \d \nonumber\\
   & = - \cos(\theta_{x^\n,p}(\A) ) + \delta' + \|\L\|_\op^2 \d <0,
   \qquad \text{(see \eqref{eq:criticalAngle})}
\end{align*}
where the last line above holds if
\begin{align}
    \delta' + \|\L\|_\op^2 \d < \cos(\theta_{x^\n,p}(\A) ).
\end{align}
Since the choice of the point $x\in \slice$ above was arbitrary, we arrive at the following: There exists $Q_\slice\in \range(\L^*) $ such that 
\begin{align*}
    \langle Q_\slice, x - \wt{x} \rangle <0,
    \qquad \forall x\in \slice.
\end{align*}
\end{enumerate}
We may now invoke Lemma~\ref{lem:nonCvxLearn} to conclude that $x^\n$ is the unique minimizer of problem~\eqref{eq:mainNonCvx}. This completes the proof of Theorem~\ref{thm:constructOpt} \rev{after choosing $\d'=\cos(\theta_{x^\n,p}(\A))/2$.}

\subsection{Proof of Lemma~\ref{lem:fixedSliceIso}}\label{sec:proofFixedSliceIso}
For $x'\in \slice'$, we write that 
\begin{align}
    & \frac{\langle Q_{\slice'}, x' - \wt{x} \rangle}{\|x_{\slice'} - \wt{x}\|_2 \| x'-\wt{x}\|_2 }  \nonumber\\
    & = \frac{\langle \L^*(\L(\wt{x}-x_{\slice'} )) , x' - \wt{x} \rangle }{\|x_{\slice'} - \wt{x}\|_2 \|x'-\wt{x}\|_2} 
    \qquad \text{(see \eqref{eq:defnQSFixed})}
    \nonumber\\
    & = -\langle \L( u_{x_{\slice'}}  ) , \L( u_{x'}) \rangle 
    \qquad \text{(see \eqref{eq:shorthand})}
    \nonumber\\
    & = \frac{-1}{4}( \| \L(u_{x_{\slice'}}+u_{x'}) \|_2^2 - \|\L( u_{x_{\slice'}}-u_{x'} )\|_2^2  ) 
    \qquad \text{(parallelogram identity)}
    \nonumber\\
    & \le -\frac{1}{4} ( (1-\delta')\|u_{x_{\slice'}} + u_x \|_2^2 - \|\L\|_{\op} \|u_{x_{\slice'}} - u_{x'}\|_2^2 ) 
    \qquad (\L \text{ satisfies  the probabilistic RIP, see \eqref{eq:isoDefn}})
    \nonumber\\
    & = - \frac{1}{4}( \| u_{x_{\slice'}}+u_{x'} \|_2^2 -\| u_{x_{\slice'}} - u_{x'}\|_2^2 ) + \frac{\delta'}{4} \|u_{x_{\slice'}} +u_{x'} \|_2^2 +
    \frac{\|\L\|_\op-1}{4}
    \|u_{x_{\slice'}} - u_{x'}\|_2^2   
    \nonumber\\
    & \le  - \frac{1}{4}( \| u_{x_{\slice'}}+u_{x'} \|_2^2 -\| u_{x_{\slice'}} - u_{x'}\|_2^2 ) + \frac{1}{4}\max(\d,\|\L\|_\op-1) \l(\|u_{x_{\slice'}} +u_{x'} \|_2^2 +
    \|u_{x_{\slice'}} - u_{x'}\|_2^2\r)
    \nonumber\\
    & = - \langle u_{x_{\slice'}} , u_{x'} \rangle + \frac{1}{2}\max(\d,\|\L\|_\op-1)\l(\|u_{x_{\slice'}} \|_2^2 + \| u_{x'} \|_2^2 \r) \nonumber\\
    & = - \langle u_{x_{\slice'}} , u_{x'} \rangle + \max(\delta' ,\|\L\|_\op-1)
    \qquad \text{(see \eqref{eq:shorthand})}
    \nonumber\\
    & \le - \min_{u_{x''} \in \slice'} \,\,\langle u_{x_{\slice'}}, u_{x''} \rangle +  \max(\delta' ,\|\L\|_\op-1) \nonumber\\
    & = - \max_{u_{x'} \in \slice'} \min_{u_{x''} \in \slice'} \,\, \langle u_{x'}, u_{x''} \rangle  + \max(\delta' ,\|\L\|_\op-1)
    \qquad \text{(see (\ref{eq:coneAngleDefn}) and (\ref{eq:achievesConeAngle}))}
    \nonumber\\
    & = - \cos( \angle \cone(\slice' - \wt{x}) ) + \max(\delta' ,\|\L\|_\op-1),
    \qquad \text{(see \eqref{eq:coneAngleDefn})}
\end{align}
which completes the proof of Lemma~\ref{lem:fixedSliceIso}. 

\subsection{\rev{Review of} {Corollary~3.3.1 in~\cite{chandrasekaran2012convex}}}\label{sec:review}
For completeness, below we review  Corollary 3.3.1 in~\cite{chandrasekaran2012convex}, adapted to our notation. 

\textbf{Corollary 3.3.1 in \cite{chandrasekaran2012convex}.}
\emph{Suppose that the alphabet $\A\subset \R^d$ is a compact set and that~\eqref{eq:extInclusionGen} holds with equality, i.e., $\ext(\hull(\A))=\A$. 
Consider the  model $x^\n\in \R^d$ in~\eqref{eq:mainModel} and let $\L:\R^d\rightarrow\R^m$ be the linear map associated with the $m\times d$ Gaussian random matrix, populated with independent and zero-mean  normal random variables with the variance of $1/m$.  Then the learning machine 
\begin{align*}
    \min_x\,\, \gauge_\A(x)\,\, \textup{subject to}\,\, y=\L(x),
\end{align*}
returns $x^\n$, provided that 
\begin{equation*}
m \ge w(\Omega)^2+1,
\end{equation*}
and except with a probability of at most $\exp(-C(\sqrt{m}-w(\Omega))^2)$. \rev{Here,  $C$ is a universal constant.} Above, we set $\Omega := \cone( \A - x^\n/\gauge_\A(x^\n) ) \cap \S^{d-1}$. \rev{In words, $\Omega$ is} the intersection of the unit sphere with the tangent cone of $\hull(\A)$ at $x^\n/\gauge_\A(x^\n)$. \rev{Moreover,} $w(\Omega)$ is the Gaussian width of $\Omega$, i.e., 
\begin{align*}
    w(\Omega) = \E_g \l[\sup_{x\in \Omega}\,\, \langle g, x\rangle\r],
\end{align*}
where $g\in \R^d$ is a standard Gaussian random vector, i.e., populated with independent, zero-mean and  unit-variance normal random variables. 
}
\section{Technical Details of Section~\ref{sec:action}}

\subsection{Proof of Corollary~\ref{cor:manifold}}
First note that 
\begin{align}
    \gauge_{\A,1}(A^\n) & = 1.
    \label{eq:unitGaugeManifoldCor}
\end{align}
\rev{We can establish \eqref{eq:unitGaugeManifoldCor} by way of contradiction:} \rev{Recall the definition of gauge$_p$ function in~\eqref{eq:gauge2}. It is easy to see that $\gauge_{\A,1}(A^\n)\le 1$.}
\rev{In particular, } if $\gauge_{\A,1}(A^\n)<1$, then there exists an atom $A'\in\A$ aligned with $A^\n$ \rev{such that} $\|A'\|_2>\|A^\n\|_2=1$. \rev{This} contradicts the assumption that $\angle[A'-A^\n,A^\n]>0$ \rev{and establishes~\eqref{eq:unitGaugeManifoldCor}.}  

For atoms $A,A'\in \A$, consider the corresponding one-dimensional slices $\slice,\slice'\in \Slice_1(\A)$, \rev{which are} specified as
\begin{align*}
    \slice = \bigcup_{0\le \tau \le 1} \tau A ,
    \qquad 
    \slice' = \bigcup_{0\le \tau \le 1} \tau A'.
\end{align*}
\rev{Note that $\slice$ and $\slice'$} are simply the line segments connecting $A$ and $A'$ to the origin, \rev{respectively}. With $p=1$, the distance in~\eqref{eq:metricThm} \rev{between these two slices} is 
\begin{align}
    \dist_1(\slice,\slice') & = \sqrt{
    2 - 2 \cos \l(
    \angle \l[
    \cone(\slice - A^\n), \cone( \slice' -  A^\n)
    \r]
    \r)
    }\qquad \text{(see (\ref{eq:metricThm},\ref{eq:unitGaugeManifoldCor}))} \nonumber\\
    & = \dist_\H \l( \cone (\slice - A^\n ) \cap \S^{d-1}, \cone(\slice'-A^\n)\cap \S^{d-1}  \r)
    \qquad \text{(see \eqref{eq:angleConesDefn})} \nonumber\\
    & \le \l\| \frac{A-A^\n}{\|A-A^\n\|_2} - \frac{A'-A^\n}{\|A'-A^\n\|_2} \r\|_2,
    \label{eq:metricManifold}
\end{align}
with the convention that \rev{$0/0=-A^\n$}. \rev{In the last line above, we used the fact that the arguments of $\dist_\H$ are two arcs on the unit sphere. The first arc passes through $-A^\n$ and $(A-A^\n)/\|A-A^\n\|_2$. The second arc passes through $-A^\n$ and $(A'-A^\n)/\|A'-A^\n\|_2$. The Hausdorff distance between these two arcs is bounded by the distance of their end points, see the last line of~\eqref{eq:metricManifold}.}
Since each one-dimensional slice can be identified with its corresponding atom, it  follows that\rev{
 \begin{align*}
    & \entropy(\Slice_1(\A),\dist_1,\d) =  \entropy(U_{A^\n}(\A),\|\cdot\|_2,\d),
    \nonumber\\
    & \text{where}\quad 
    U_{A^\n}(\A) = \l\{ \frac{A-A^\n}{\|A-A^\n\|_2}: A\in \A \r\},
\end{align*}}for every $\d>0$ \rev{and with the convention that $0/0=-A^\n$.}
\rev{We can now invoke Lemma 15 from~\cite{eftekhari2015new} to find that 
\begin{align}
    \entropy(U_{A^\n}(\A),\|\cdot\|_2,\d) \le C'' k \log \l( \d^{-1} (\vol_k(\A))^{\frac{1}{k}} \reach(\A) \r),
    \label{eq:manifoldLemmaInvoked}
\end{align}
for a universal constant $C''$ and every $\d>0$.}


On the other hand, \rev{by definition,} the critical angle in~\eqref{eq:criticalAngle} satisfies 
\begin{align}
    \theta_{1,A^\n}(\A) & = \sup\l\{
     \frac{1}{2} \angle[ A-A^\n, -A^\n] : A\in \A-\{A^\n\} \r\}
     \qquad \text{(see \eqref{eq:criticalAngle})}
     \nonumber\\
     & = \sup \l\{ \frac{\pi}{2} - \frac{1}{2}\angle [A-A^\n,A^\n]: A\in \A - \{A^\n\}  \r\}  \nonumber\\
     & = \frac{\pi}{2}- \frac{\theta'_{A^\n,1}(\A)}{2}
     \qquad \rev{\text{(see \eqref{eq:gammaDefn})}}
     \nonumber\\
     & < \frac{\pi}{2},
    \label{eq:critAngleManifold}
\end{align}
where the last line above uses  the assumption on the alphabet $\A$. In view of~\eqref{eq:manifoldLemmaInvoked} and \eqref{eq:critAngleManifold}, we may now invoke Theorem~\ref{thm:constructOpt} to complete the proof of Corollary~\ref{cor:manifold}.

\rev{\subsection{Proof of Proposition \ref{prop:infLimit}}}

\rev{
Let 
\begin{equation}
\mathcal{U}:=\{u\in \R^{d}: \|u\|_2=1, \, \|u\|_0\le k\},
\label{eq:defnBigU}
\end{equation}
for short. For a fixed vector $u\in \mathcal{U} $, note that $\E[u^\top z_i]=0$ and that 
\begin{align}
    \E[(u^\top z_i)^2 ] & = u^\top\E[z_i z_i^\top] u = u^\top \Sigma u \nonumber\\
    & = u^\top A^\n u+ \theta \|u\|_2^2 = (u^\top u^\n)^2 + \theta, 
    \qquad \text{(see \eqref{eq:defnSigma})}
    \label{eq:fixedVar}
\end{align}
for every $i\le n$. In the second line above, 
we used the fact that any vector $u\in \mathcal{U}$ has unit $\ell_2$-norm.  
We are particularly interested in the deviation of the random variable $(u^\top z_i)^2$ from its expectation.

Throughout, $C$ is a universal constant, the value of which might change in every appearance. 
Recall from~\cite[
Definition 2.7]{vershynin2018high} the notion of sub-exponential norm of a random variable, which we denote by~$\|\cdot\|_{\psi_1}$. In particular, the sub-exponential norm of the random variable $(u^\top z_i)^2-(u^\top u^\n)^2-\theta$ can be calculated as 
\begin{align}
    \l\| (u^\top z_i)^2 - (u^\top u^\n)^2-\theta \r\|_{\psi_1} &\le C \l\| (u^\top z_i)^2\r\|_{\psi_1} 
    \qquad \text{\cite[Lemma 2.6.8]{vershynin2018high}}
    \nonumber\\
    & = C\| u^\top z_i\|_{\psi_2}^2
    \qquad \text{\cite[
Lemma 2.7.6]{vershynin2018high}} \nonumber\\
& \le C \E[(u^\top z_i)^2],
\qquad \text{\cite[Example 2.5.8]{vershynin2018high}}
\label{eq:subE}
\end{align}
where $\|\cdot\|_{\psi_2}$ returns  the sub-Gaussian norm of a random variable~\cite[Definition 2.5.6]{vershynin2018high}.  In view of~\eqref{eq:fixedVar}, we can revisit~\eqref{eq:subE} and write that 
\begin{equation}
    \l\| (u^\top z_i)^2 - (u^\top u^\n)^2-\theta \r\|_{\psi_1} \le C (u^\top u^\n)^2+ C\theta\le C, 
\end{equation}
where the last inequality above uses the fact that $u,u^\n\in \mathcal{U}$ are unit-norm vectors and that $\theta<1$. 
Because $\{z_i\}_{i=1}^n$ are independent random variables, we can now apply the Bernstein inequality~\cite[Corollary 2.8.3]{vershynin2018high} and find that
\begin{align}
    \l| u^\top y u - (u^\top u^\n)^2 - \theta \r| = \l|\frac{1}{n}\sum_{i=1}^n (u^\top z_i)^2 - (u^\top u^\n)^2 - \theta \r| \le \d, 
    \qquad \text{(see \eqref{eq:sampleCovMat})}
    \label{eq:bernie2}
\end{align}
except with the probability of at most $\exp( -C\min(\d^2,\d)n)$ and for every $\d>0$. 
Here, we  used the fact that~$\theta<1$ to simplify the failure probability. 

The remainder of the proof is a standard covering argument.
For $\epsilon>0$ to be set later, let $\mathcal{U}_\epsilon$ denote a minimal $\epsilon$-net for $\mathcal{U}$, with respect to the $\ell_2$-norm, see Definition~\ref{defn:entropy} or~\cite[Definition 4.2.1]{vershynin2018high}. By construction, $\log|\mathcal{U}_\epsilon|=\mathrm{entropy}(\mathcal{U},\epsilon)$, where $|\cdot|$ returns the size of a finite set and the right-hand side denotes the entropy number of $\mathcal{U}$ at resolution~$\epsilon$.  Using the definition of $\mathcal{U}$ in \eqref{eq:defnBigU}, it is not difficult to calculate that
\begin{equation}
\log|\mathcal{U}_\epsilon| = \mathrm{entropy}(\mathcal{U},\epsilon) \le C k \l( \log d+ \log(1/\epsilon)\r).
\qquad \text{(see \cite[Corollary 4.2.13]{vershynin2018high})}
\label{eq:coverNumU}
\end{equation}
Applying the union bound to \eqref{eq:bernie2} and using \eqref{eq:coverNumU}, we find that 
\begin{align}
    \max_{u\in \mathcal{U}_\epsilon}  \l|u^\top y u - (u^\top u^\n)^2 - \theta \r| \le \d,
    \label{eq:onTheCover}
\end{align}
except with a probability of at most $\exp(C k \log d - C k \log(1/\epsilon)- C\min(\d^2,\d)n)$. 

Next, consider an arbitrary $u\in \mathcal{U}$ and choose $u_\epsilon\in \mathcal{U}_\epsilon$ such that $\|u-u_\epsilon\|_2\le \epsilon$. Such a point $u_\epsilon$ is guaranteed to exist by construction of the $\epsilon$-net $\mathcal{U}_\epsilon$. Using the reverse triangle inequality, 
we then write that  
\begin{align}
     & \l|u^\top y u - (u^\top u^\n)^2 - \theta \r| - \l|u^\top_\epsilon y u_\epsilon  - (u_\epsilon^\top u^\n)^2-\theta\r| \nonumber\\
     & 
    \le  \l|\l( u^\top y u - u^\top_\epsilon y u_\epsilon\r)-
    \l( (u^\top u^\n)^2 -  (u_\epsilon^\top u^\n)^2 
    \r)
    \r| \qquad \text{(reverse triangle inequality)} \nonumber\\
    & \le \l| u^\top y u-u_\epsilon^\top y u_\epsilon\r| +  \l| (u^\top u^\n)^2 - (u_\epsilon^\top u^\n)^2  \r|
    \qquad \text{(triangle inequality)} \nonumber\\
    & =\frac{1}{n} \l| \sum_{i=1}^n (u^\top z_i)^2-(u_\epsilon^\top z_i)^2 \r| 
    + \l| (u^\top u^\n)^2 - (u_\epsilon^\top u^\n)^2  \r|
    \qquad \text{(see \eqref{eq:sampleCovMat})} \nonumber\\
    & \le \| u- u_\epsilon\|_2 (\|u\|_2+\|u_\epsilon\|_2) \frac{1}{n}\sum_{i=1}^n \|z_i\|_2^2
    + \|u-u_\epsilon\|_2 (\|u\|_2+\|u_\epsilon\|_2) \|u^\n\|_2^2, 
\end{align}
where the last line above uses the Cauchy-Schwarz's inequality multiple times. By construction, $u$ and $u_\epsilon$ satisfy $\|u\|_2=\|u_\epsilon\|_2=\|u^\n\|_2=1$ and $\|u-u_\epsilon\|_2\le \epsilon$. With this in mind, we bound the last line above
\begin{align}
    \l|u^\top y u - (u^\top u^\n)^2 - \theta \r| - \l|u^\top_\epsilon y u_\epsilon  - (u_\epsilon^\top u^\n)^2-\theta\r|
    & \le \frac{2\epsilon}{n} \sum_{i=1}^n \|z_i\|_2^2 + 2\epsilon. 
    \label{eq:beforeZ'}
\end{align}
Recall that $z_i\sim \mathrm{normal}(0,\Sigma)$, which allows us to write that $z_i \overset{\mathrm{dist.}}{=} \Sigma^{\frac{1}{2}}z_i'$ for a standard Gaussian random variable $z_i'\sim \mathrm{normal}(0,I_d)$. Because $\{z_i\}_{i=1}^n$ are statistically independent, then so are the new random variables $\{z_i'\}_{i=1}^n$. We now revisit \eqref{eq:beforeZ'} and write its right-hand side as
\begin{align}
\l|u^\top y u - (u^\top u^\n)^2 - \theta \r| - \l|u^\top_\epsilon y u_\epsilon  - (u_\epsilon^\top u^\n)^2-\theta\r|
    & \le \frac{2\epsilon}{n} \sum_{i=1}^n \|\Sigma^{\frac{1}{2}} z_i'\|_2^2 + 2\epsilon \nonumber\\
    & \le \frac{2\epsilon \|\Sigma\|}{n} \sum_{i=1}^n  \|z_i'\|_2^2 + 2\epsilon \nonumber\\
    & \le \frac{4\epsilon}{n} \sum_{i=1}^n  \|z_i'\|_2^2 + 2\epsilon,
\end{align}
where the last line above follows because $\|\Sigma\|=1+\theta< 2$, see \eqref{eq:defnSigma}. Note that $\sum_{i=1}^n \|z_i'\|_2^2 $ is a Chi-square random variable of degree $nd$ because $\{z'_i\}_{i=1}^n$ are independent standard Gaussian random vectors. The tail probability of a Chi-square random variable is well-known and we therefore have that    
\begin{align}
    \l|u^\top y u - (u^\top u^\n)^2 - \theta \r| - \l|u^\top_\epsilon y u_\epsilon  - (u_\epsilon^\top u^\n)^2-\theta\r|
    & \le C\epsilon d,
    \label{eq:diffCover}
\end{align}
except with a probability of at most $\exp(-Cnd)$. Together, \eqref{eq:onTheCover} and \eqref{eq:diffCover} imply that 
\begin{align}
    \max_{u\in \mathcal{U}}\l|u^\top y u - (u^\top u^\n)^2 - \theta \r| & 
    \le \max_{u\in \mathcal{U}_\epsilon}\l|u^\top y u - (u^\top u^\n)^2 - \theta \r| + C\epsilon d \nonumber\\
    & \le \d + C\epsilon d,
    \label{eq:uppBnd}
\end{align}
except with a probability of at most 
\begin{align}
    \exp\l(Ck\log d+Ck \log(1/\epsilon)- C\min(\d^2,\d)n\r) + \exp(-Cnd).
    \label{eq:failProp}
\end{align}
Consider a sequence $\{k_l,d_l,n_l\}_l$ such that $\lim_{l\rightarrow \infty}n_l=\infty$.
Consider also the sequence  $\{\epsilon_l\}_l$ with $\epsilon_l=1/d_l^2$. 
As $l\rightarrow\infty$, in view of \eqref{eq:uppBnd} and~\eqref{eq:failProp}, there exists a sequence $\{\d_l\}_l$ such that 
\begin{align}
     \max_{u\in \mathcal{U}}\l|u^\top y u - (u^\top u^\n)^2 - \theta \r| \rightarrow 0,
\end{align}
with a probability that converges to one and provided that 
$$
\lim_{l\rightarrow \infty}k_l\log(d_l)/n_l=0.
\label{eq:goodRatio}
$$
Lastly, we note that $u^\n$ is the unique maximizer of the function $u\rightarrow (u^\top u^\n)^2+\theta$. This completes the proof of Proposition \ref{prop:infLimit}.}

\rev{\subsection{Proof of Proposition \ref{prop:sparcePCAgen}}

Consider nonnegative coefficients $\{c_i\}_{i=1}^p$ and atoms $\{A_i\}_{i=1}^p \subset \A$ that are feasible for the optimization problem~\eqref{eq:mainNonCvx}, i.e., $\sum_{i=1}^p c_i \le \gauge_{\A,p}(x^\n)$.
Note that $$\l\|y - \sum_{i=1}^p c_i A_i\r\|_\mathrm{F}^2=\|y\|_\mathrm{F}^2 - 2 \sum_{i=1}^p c_i \langle y,  A_i \rangle + \l\| \sum_{i=1}^p c_i A_i\r\|_\mathrm{F}^2,$$
where only the second and third components depend on $\{c_i,A_i\}_i$.
Consequently, the problem~\eqref{eq:mainNonCvx} has the same solutions as 
\begin{align}
    \min\l\{ -2 \sum_{i=1}^p c_i \langle y,A_i \rangle + \l\|\sum_{i=1}^p c_i A_i\r\|_\mathrm{F}^2 : \sum_{i=1}^p c_i \le \gauge_{\A,p}(x^\n),\, c_i \ge 0,\, A_i\in \mathcal{A} \r\}.
    \label{eq:rewritten1}
\end{align}
Since $y$ in \eqref{eq:sampleCovMat} is random, the only random term above is $\sum_{i=1}^p c_i \langle y,A_i\rangle$. We will focus on this random term first. 
For every $i\le p$, $A_i\in \mathcal{A}$ implies that there exists $u_i\in \mathcal{U}$ such that $A_i=u_i u_i^\top$. Recall that the sets $\A$ and $\mathcal{U}$ were defined in~\eqref{eq:spca2} and~\eqref{eq:defnBigU}, respectively. We can now rewrite the only random term in~\eqref{eq:rewritten1} as
\begin{equation}
    \sum_{i=1}^p c_i \langle y,A_i\rangle = \frac{1}{n}\sum_{i=1}^p\sum_{j=1}^n c_i \l\langle z_j z_j^\top , u_i u_i^\top \r\rangle =
    \sum_{i,j} c_i \langle u_i, z_j \rangle^2 .
    \qquad \text{(see \eqref{eq:sampleCovMat})}
\end{equation}
Recall from \eqref{eq:fixedVar} that 
\begin{align}
\E[\langle y,A_i\rangle]=\E[\langle u_i, z_j \rangle^2] = \langle \Sigma,A_i \rangle 
& = u_i^\top \Sigma u_i \qquad \text{(see \eqref{eq:fixedVar})} \nonumber\\
& = u_i^\top \l( \sum_{j=1}^r c_j^\n u_j^\n (u_j^\n)^\top + \theta I\r) u_i 
\qquad \text{(see \eqref{eq:spikedModel2})}
\nonumber\\
& = \sum_{j=1}^r c_j^\n (u_i^\top u_j^\n)^2 + \theta \sum_{j=1}^r c_j^\n \|u_i\|_2^2 \nonumber\\
& = \sum_{j=1}^r c_j^\n (u_i^\top u_j^\n)^2 + \theta \sum_{j=1}^r c_j^\n, \qquad \text{(see \eqref{eq:defnBigU})}
\label{eq:exp2i}
\end{align}
where the last line follows because $u_i\in \mathcal{U}$ is a unit-length vector for every $i$. 
On the other hand, following the same steps as in the proof of Proposition~\ref{prop:infLimit}, it is easy to verify that 
\begin{align}
    \max_{u\in \mathcal{U}} \l| 
    u^\top y u - \sum_{j=1}^r c_j^\n (u^\top u_j^\n)^2 - \theta \sum_{j=1}^r c_j^\n
    \r| & \le \d+ C \epsilon d,
    \label{eq:maxOnUEps2}
\end{align}
except with a probability of at most 
\begin{align}
    \exp\l(Ck\log d+Ck \log(1/\epsilon)- C\min(\d^2,\d)n\r) + \exp(-Cnd).
    \label{eq:failProp2}
\end{align}
Only this time, the factor $C$ in \eqref{eq:maxOnUEps2} and \eqref{eq:failProp2} may depend on $\gauge_p(x^\n)$. 
With \eqref{eq:maxOnUEps2} and \eqref{eq:failProp2} at hand, we now write that 
\begin{align}
    \l| \sum_{i=1}^p c_i \langle y,A_i \rangle  - \sum_{i=1}^p c_i \langle \Sigma,A_i\rangle \r| 
    & = \l| \sum_{i=1}^p c_i \langle y,A_i \rangle  - \sum_{i=1}^p c_i \E[\langle y,A_i \rangle] \r| 
    \qquad \text{(see the first line of \eqref{eq:exp2i})}\nonumber\\
   & \le \sum_{i=1}^p c_i \cdot \max_{A\in \A}\l| \langle y,A \rangle - \E[\langle y,A \rangle] \r| 
   \qquad \text{(Holder's inequality)}
   \nonumber\\
   & = \sum_{i=1}^p c_i \max_{u\in \mathcal{U}} \l| u^\top y u - \sum_{j=1}^r c_j^\n (u^\top u_j^\n)^2 - \theta \sum_{j=1}^r c_j^\n
    \r| 
    \qquad \text{(see \eqref{eq:exp2i})}
    \nonumber\\
    & \le \sum_{i=1}^p c_i \l( \d+ C \epsilon d \r)
    \qquad \text{(see \eqref{eq:maxOnUEps2})}
    \nonumber\\
    & \le \gauge_{\A,p}(x^\n)\l( \d+ C \epsilon d\r),
    \qquad (\text{feasibility of }\{c_i\}_{i=1}^p)
\end{align}
except with the failure probability specified in \eqref{eq:failProp2}. With the same argument as in the proof of Proposition~\ref{prop:infLimit}, we find that \eqref{eq:rewritten1} has asymptotically the same minimizers as
\begin{align}
    \min\l\{ - 2\sum_{i=1}^p c_i \langle \Sigma, A_i \rangle +  \l\| \sum_{i=1}^p c_i A_i \r\|^2_{\mathrm{F}}: \sum_{i=1}^p c_i \le \gauge_{\A,p}(x^\n), \, c_i \ge 0,\, A_i\in \A \r\},
    \label{eq:beforeCompletedSquares}
\end{align}
provided that \eqref{eq:goodRatio} holds. We next focus on the deterministic optimization problem~\eqref{eq:beforeCompletedSquares}.  Recall from~\eqref{eq:spikedModel2} that 
$$\Sigma = x^\n+\theta I= \sum_{i=1}^r c_i^\n A_i^\n+\theta I.
$$
Substituting for $\Sigma$ in \eqref{eq:beforeCompletedSquares}, we find that 
\begin{align}
    & \min\l\{ - 2\l \langle \sum_{j=1}^r c_j^\n A_j^\n, \sum_{i=1}^p c_i A_i \r\rangle - 2\theta \sum_{i=1}^p c_i \tr(A_i) +  \l\| \sum_{i=1}^p c_i A_i \r\|^2_{\mathrm{F}}: \sum_{i=1}^p c_i \le \gauge_p(x^\n), \, c_i \ge 0,\, A_i\in \A \r\} \nonumber\\
    & = \min\l\{ - 2\l \langle \sum_{j=1}^r c_j^\n A_j^\n, \sum_{i=1}^p c_i A_i \r\rangle - 2\theta \sum_{i=1}^p c_i  +  \l\| \sum_{i=1}^p c_i A_i \r\|^2_{\mathrm{F}}: \sum_{i=1}^p c_i \le \gauge_{\A,p}(x^\n), \, c_i \ge 0,\, A_i\in \A \r\}  \nonumber\\
    &  \ge \min\l\{ - 2\l \langle \sum_{j=1}^r c_j^\n A_j^\n, \sum_{i=1}^p c_i A_i \r\rangle - 2\theta \gauge_p(x^\n)  +  \l\| \sum_{i=1}^p c_i A_i \r\|^2_{\mathrm{F}}: \sum_{i=1}^p c_i \le \gauge_{\A,p}(x^\n),  c_i \ge 0, A_i\in \A \r\},
    \label{eq:tightRelaxationSparsePCA}
\end{align}
where the second line above uses the fact that $A_i\in \A$ satisfies $\tr(A_i)=1$. (We will later show that the relaxation in the last line above is, in fact, tight.)
In the last line above, note also that we can remove the term $-2\theta \gauge_{\A,p}(x^\n)$ without changing the minimizers. That is, instead of the optimization problem in the last line above, we can solve 
\begin{align}
    \min\l\{ - 2\l \langle \sum_{j=1}^r c_j^\n A_j^\n, \sum_{i=1}^p c_i A_i \r\rangle   +  \l\| \sum_{i=1}^p c_i A_i \r\|^2_{\mathrm{F}}: \sum_{i=1}^p c_i \le \gauge_{\A,p}(x^\n), \, c_i \ge 0,\, A_i\in \A \r\}. 
    \label{eq:beforeCompletedSquares2}
\end{align}
We can add and subtract $\|\sum_{j=1}^r c_j^\n A_j^\n\|_\mathrm{F}^2$ to the objective function above. By doing so, we observe that~\eqref{eq:beforeCompletedSquares2} has, in turn, the same minimizers as 
\begin{align}
    \min\l\{   \l\|\sum_{i=1}^r c_i^\n A_i^\n  -  \sum_{i=1}^p c_i A_i\r\|^2_{\mathrm{F}} :\sum_{i=1}^p c_i \le \gauge_{\A,p}(x^\n), \, c_i \ge 0,\, A_i\in \A  \r\},
    \label{eq:afterCompletedSquares}
\end{align}
Recall from~\eqref{eq:spikedModel2} that $x^\n=\sum_{i=1}^r c_i^\n A_i^\n$. Let $x^\n = \sum_{i=1}^p c_i A_i$ be a different decomposition of $x^\n$, where $c_i\ge 0$ and $A_i\in \A$ for every $i\le p$. If we take the trace of both sides of the last identity, we find that 
\begin{align}
     \sum_{i=1}^p c_i = \sum_{i=1}^p c_i \tr(A_i) =  \tr(x^\n) = \sum_{i=1}^r c_i^\n \tr(A_i^\n) = \sum_{i=1}^r c_i^\n,
    \label{eq:fixedGauge}
\end{align}
where we also used the fact that every atom $A\in \A$ satisfies $\tr(A)=1$. After recalling the definition of the gauge$_p$ function in~\eqref{eq:gauge2}, note that \eqref{eq:fixedGauge} implies 
\begin{equation}
    \gauge_{\A,p}(x^\n)=\tr(x^\n). 
    \label{eq:gaugeSparsePCA}
\end{equation}
It follows from \eqref{eq:gaugeSparsePCA} that $\{c_i,A_i\}_{i=1}^p$ is feasible for the problem~\eqref{eq:afterCompletedSquares}. Note also that 
\begin{equation}
    \sum_{i=1}^r c_i^\n A_i^\n - \sum_{i=1}^p c_i A_i = x^\n - x^\n=0.
\end{equation}
That is, $\{A_i^\n\}_{i=1}^r \cup \{A_i\}_{i=1}^p$ are linearly dependent. Recalling the definition of spark, we conclude that $r+p \ge  \mathrm{spark}(\A)$. In other words, if we take $p<\mathrm{spark}(\A)-r$, then $x^\n=\sum_{i=1}^r c_i^\n A_i^\n$ is the unique $p$-sparse decomposition of $x^\n$ in the alphabet $\A$. Consequently, $\{c_i^\n,A_i^\n\}_{i=1}^r$ is the unique solution of~\eqref{eq:afterCompletedSquares}. In fact, as we saw earlier, \eqref{eq:afterCompletedSquares} has the same minimizers as the problem in the last line of \eqref{eq:tightRelaxationSparsePCA}. Therefore, $\{c_i^\n,A_i^\n\}_{i=1}^r$ is also the unique solution of the problem in the last line of~\eqref{eq:tightRelaxationSparsePCA}. Recall from  \eqref{eq:fixedGauge} and \eqref{eq:gaugeSparsePCA} that $\sum_{i=1}^r c_i^\n = \gauge_p(x^\n)$. Therefore, for the choice of $\{c_i^\n,A_i^n\}_{i=1}^r$, the objective function in the second and third lines of \eqref{eq:tightRelaxationSparsePCA} coincide. That is, the relaxation in \eqref{eq:tightRelaxationSparsePCA} is tight and we can replace the inequality in \eqref{eq:tightRelaxationSparsePCA} with equality.
This completes the proof of Proposition \ref{prop:sparcePCAgen}.
}
\section{Technical Details of Section~\ref{sec:algorithm}}

\subsection{Proof of Lemma~\ref{lem:MIQP}}
It is straightforward to see that the continuous variables $c_i = 0$ if and only if the binary variables~$s_i = 0$. Therefore, the constraint $\sum_{i=1}^{l}s_i = p$ of the binary variables directly imposes the required $p$-sparsity condition on $c$. 

\subsection{Proof of Proposition~\ref{prop:alg}}
First, observe that the machine~\eqref{eq:mainNonCvx} (or equivalently the MIQP reformulation~\eqref{gauge:MIQP}) can be  rewritten as 
\begin{align*}
    \min_{c} \Big\{\|\L (\Dic) c - y\|_2^2 ~:~ c \ge 0, \quad \ones^\top c \le \gauge_{\A,p}(x^\n), \quad \|c\|_0 \le p \Big\}. 
\end{align*}
Above, as usual,~$\|c\|_0$ denotes the number of nonzero entries of the vector~$c$. One can encode the nonzero elements of the vector~$c$ as a subset~$\Set \subset [|\A|]$. That is, we can introduce a new vector $c_{\Set}$ such that~$(c_\Set)_i = (c)_i$, when $i \in \Set$, otherwise $[c_\Set]_i= 0$. In this way, the above optimization program can be rewritten as
\begin{equation}\label{gauge:primal}
    \min_{{\footnotesize \begin{array}{c} \Set \subset [|\A|]\\ |\Set| \le p\end{array}}} \min_{c_{\Set}, z} \Big\{\|z\|_2^2 ~:~ z = \L (\Dic) c_{\Set} - y,\quad C c_{\Set} \le g \Big\}, 
\end{equation}
where the matrix $C$ and the vector~$g$ were defined in the proposition. Note that the inner optimization program in~\eqref{gauge:primal} is indeed a convex quadratic programming. This observation allows us to claim two things: (i)~We can add an additional term~$\|c_\Set\|_2^2/\gamma$ in the objective function where for all sufficiently large~$\gamma$ the optimal solution does not change. Indeed, the objective value of the convex inner problem does not change by adding the constraint~$\|c_S\|_2\le \epsilon$ for a sufficiently large~$\epsilon$. By  convexity of the inner problem, this is equivalent to adding the penalty term ~$\|c_\Set\|_2^2/\gamma$ for a sufficiently large~$\gamma$. (ii)~Thanks to the convexity, we can dualize the linear constraints and arrive at the equivalent optimization program
\begin{align*}
    \min_{{\footnotesize \begin{array}{c} \Set \subset [|\A|]\\ |\Set| \le p\end{array}}} \max_{\mu \ge 0, \lambda} \min_{c_{\Set}, z} \Big(\|z\|_2^2 + \frac{1}{\gamma}\|c_\Set\|_2^2 + \lambda^\top\big(\L( \Dic) c_{\Set} - y - z \big) + \mu^\top\big( C c_{\Set} - g \big)\Big).
\end{align*}
Note that the most inner minimization above is an unconstrained convex quadratic program. Computing the analytical solution for the variables~$(c_{\Set}, z)$ yields the desired program~\eqref{gauge:minimax}. With regards to the algorithm described through the dynamics~\eqref{gauge:alg}, first observe that the relation~\eqref{gauge:alg:Set} is the same as the maximizer of the objective function~\eqref{gauge:minimax} when the set is fixed to~$\Set_k$. Note further that the fixed point of~\eqref{gauge:alg} is indeed a saddle-point equilibrium for the zero-sum game between the player~$\Set$ and $(\mu,\lambda)$. Therefore, the equilibrium~${\Set}^\star$ is in fact also a ``policy security", i.e., the pair is the solution to the minimax program~\eqref{gauge:minimax} and its dual when the order of the minimization and maximization operators are changed~\cite[Proposition 4.2]{ref:Basar}.

{\bibliographystyle{unsrt}
\bibliography{ref}}

\end{document}